\documentclass{amsart}

\usepackage{amssymb}
\usepackage{enumerate}

\makeatletter
\@namedef{subjclassname@2010}{%
  \textup{2010} Mathematics Subject Classification}
\makeatother

\newtheorem{theorem}{Theorem}[section] 
\newtheorem{claim}{Claim}[theorem]
\newtheorem{subclaim}{Subclaim}[claim]
\newtheorem{lemma}[theorem]{Lemma} 
\newtheorem{proposition}[theorem]{Proposition} 
\newtheorem{observation}[theorem]{Observation} 

\theoremstyle{definition}
\newtheorem{definition}[theorem]{Definition}
\newtheorem{example}[theorem]{Example}
\newtheorem{remark}[theorem]{Remark}

\newtheorem{context}[theorem]{Context}

%%%%%%%%%SPACES, FUNCTIONS, TREES%%%%%%%%%%%%%%%%
\newcommand{\conc}{{}^\frown\!}
\newcommand{\lh}{{\rm lh}\/}
\newcommand{\rest}{{\restriction}}
\newcommand{\vtl}{\vartriangleleft}
\newcommand{\dom}{{\rm dom}} 
 
\newcommand{\rng}{{\rm rng}}
 
\newcommand{\suc}{{\rm succ}}
\newcommand{\mrot}{{\rm root}}
\newcommand{\rt}{{\rm rt}}

%%%%%%%%LETTERS%%%%%%%%%%%%%%%%%%%%%%%%%%%%%%%
\newcommand{\cA}{{\mathcal A}}

\newcommand{\cB}{{\mathcal B}}

\newcommand{\bbC}{{\mathbb C}}

\newcommand{\bbD}{{\mathbb D}}

\newcommand{\cH}{{\mathcal H}}
\newcommand{\bbH}{{\mathbb H}}
\newcommand{\bbE}{{\mathbb E}}

\newcommand{\cI}{{\mathcal I}}

\newcommand{\bbP}{{\mathbb P}}

\newcommand{\bbQ}{{\mathbb Q}}

\newcommand{\cR}{{\mathcal R}}

\newcommand{\bbS}{{\mathbb S}}
\newcommand{\cS}{{\mathcal S}}

%%%%%%%%OTHER%%%%%%%%%%%%%%%%%%%%%%%%%%%%%%%%%
\newcommand{\cf}{{\rm cf}\/} 
 
\newcommand{\otp}{{\rm otp}\/} 
\newcommand{\st}{{\bf st}} 

\newcommand{\vare}{\varepsilon}

%%%%%%%%FORCING and GAMES%%%%%%%%%%%%%%%%%%%%%%%%%%%%%%%%%%
\newcommand{\forces}{\Vdash} 
\newcommand{\bV}{{\bf V}} 
\newcommand{\lesdot}{\mathrel{\mathord{<}\!\!\raise 
0.8 pt\hbox{$\scriptstyle\circ$}}} 

\newcommand{\pr}{{\rm pr}}

\newcommand{\qthree}{{{\mathbb Q}^3_\lambda}}

\newcommand{\qell}{{{\mathbb Q}^\ell_\lambda}}

\newcount\skewfactor
\def\mathunderaccent#1#2 {\let\theaccent#1\skewfactor#2
\mathpalette\putaccentunder}
\def\putaccentunder#1#2{\oalign{$#1#2$\crcr\hidewidth
\vbox to.2ex{\hbox{$#1\skew\skewfactor\theaccent{}$}\vss}\hidewidth}}
\def\name{\mathunderaccent\tilde-3 }

\begin{document}

\title{The last forcing standing with diamonds}

%    Information for first author
\author{Andrzej Ros{\l}anowski}
%    Address of record for the research reported here
\address{Department of Mathematics\\
 University of Nebraska at Omaha\\
 Omaha, NE 68182-0243, USA}
\email{roslanow@member.ams.org}

%    Information for second author
\author{Saharon Shelah}
\address{Einstein Institute of Mathematics\\
Edmond J. Safra Campus, Givat Ram\\
The Hebrew University of Jerusalem\\
Jerusalem, 91904, Israel\\
 and \\ 
Department of Mathematics\\
Rutgers University\\
Hill Center - Busch Campus\\
110 Frelinghuysen Road\\
Piscataway, NJ 08854-8019, USA}

\email{shelah@math.huji.ac.il}
\urladdr{http://shelah.logic.at}

\keywords{Forcing, uncountable support iterations, not collapsing cardinals,
  proper forcing notions}  
\subjclass{Primary 03E40; Secondary: 03E35}
\date{May 2017}

\begin{abstract}
  This article continues Ros{\l}anowski and Shelah \cite{RoSh:655,
    RoSh:942}. We introduce a new property of $({<}\lambda)$--strategically
  complete forcing notions which implies that their $\lambda$--support
  iterations do not collapse $\lambda^+$.
\end{abstract}

\maketitle

\section{Introduction}
While there are still a lot of open problems left in the theory of forcing
iterated with finite and/or countable supports and we still need to expand
our preservation theorems, there is a sense that we {\em understand these
  iterations pretty well}. Therefore it is natural to look at iterations
with uncountable supports and ask for parallel theorems. The first attempt
could be to do nothing special and just repeat what has been done for CS
iterations. We could start in the way suggested already in Shelah
\cite{Sh:100} (but not used there).

\begin{definition}
\label{properdef}
Let $\lambda=\lambda^{<\lambda}$.  A notion of forcing $\bbP$ is said to
be {\em $\lambda$--proper in the standard sense\/} (or just: {\em
  $\lambda$--proper\/}) if for all sufficiently large regular cardinals
$\chi$, there is some $x\in\cH(\chi)$ such that whenever $M$ is an
elementary submodel of $\cH(\chi)$ satisfying  
\[|M|=\lambda,\qquad \bbP, x\in M\qquad M^{<\lambda}\subseteq M\]
and $p$ is an element of $M\cap\bbP$, then there is a condition $q\geq p$
such that 
\begin{equation*}
q\forces\mbox{``} M[\name{G}_\bbP]\cap{\rm Ord} = M\cap{\rm Ord}\mbox{''}.
\end{equation*}
\end{definition}

The $\lambda$--properness may seem to be a straightforward generalization of
``proper'', with the right consequences in place. For instance: 

\begin{theorem}
[Folklore; cf.~Hyttinen and Rautila {\cite[Section 3]{HyRa01}}]
\label{basic}
Assume $\lambda^{<\lambda}=\lambda$ is an uncountable cardinal.
\begin{enumerate}
\item If a forcing notion $\bbP$ is either strategically
  $({\leq}\lambda)$--complete or it satisfies the $\lambda^+$--chain condition, 
  then $\bbP$ is $\lambda$--proper. 
\item If $\bbP$ is $\lambda$--proper, $p\in\bbP$, $\name{A}$ is a
  $\bbP$--name for a set of ordinals and $p\forces |\name{A}|\leq\lambda$,
  then there are a condition $q\in\bbP$ stronger than $p$ and a set $B$ of
  size $\lambda$ such that $q\forces\name{A}\subseteq B$. 
\item If $\bbP$ is $\lambda$--proper, then 
\[\forces_{\bbP}\mbox{`` }(\lambda^+)^{\bV}\mbox{ is a regular cardinal
  ''.}\]  
Moreover, if $\bbP$ is also strategically $({<}\lambda)$--complete, then the 
forcing with $\bbP$ preserves stationary subsets of $\lambda^+$.  
\end{enumerate}
\end{theorem}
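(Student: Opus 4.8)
The plan is to treat the three items in turn, using throughout the standard characterization that a condition $q$ is $(M,\bbP)$--generic iff for every maximal antichain $A\in M$ the set $A\cap M$ is predense above $q$, and that any such $q$ satisfies $q\forces M[G]\cap{\rm Ord}=M\cap{\rm Ord}$. First I would record the auxiliary fact that whenever $|M|=\lambda$ and $M^{<\lambda}\subseteq M$ one has $\lambda\subseteq M$: otherwise the sequence $\langle\alpha:\alpha<\delta\rangle$, where $\delta:=\min(\lambda\setminus M)$, would be a ${<}\lambda$--sequence of members of $M$, hence lie in $M$, forcing $\delta\in M$. Consequently every $x\in M$ with $\cH(\chi)\models|x|\le\lambda$ satisfies $x\subseteq M$, via a surjection $\lambda\to x$ chosen in $M$.

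For (1), in the chain--condition case every maximal antichain $A\in M$ has size $\le\lambda$, so $A\subseteq M$ and $A\cap M=A$ is predense above any condition; thus $q=p$ already witnesses $\lambda$--properness. In the strategically $({\le}\lambda)$--complete case I would fix a winning strategy $\st\in M$ in the completeness game, enumerate the $\le\lambda$ dense open subsets of $\bbP$ lying in $M$ as $\langle D_\alpha:\alpha<\lambda\rangle$, and run a play inside $M$ following $\st$: at step $\alpha$ one plays, by elementarity, a condition of $D_\alpha\cap M$ above the current position, and $\st$ answers. Since $M^{<\lambda}\subseteq M$ and $\st\in M$, every proper initial segment and every answer lies in $M$, so the play is legitimate, and as $\st$ is winning the length--$\lambda$ play has an upper bound $q$. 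This $q$ lies above a member of each $D_\alpha\cap M$, hence is $(M,\bbP)$--generic. Item (2) is then immediate: choose $M\ni\bbP,x,p,\name{A},\lambda$ with $|M|=\lambda$ and $M^{<\lambda}\subseteq M$, take the generic $q\ge p$ from $\lambda$--properness, and set $B:=M\cap{\rm Ord}$, so $|B|=\lambda$. Pick in $M$ a name $\name{g}$ for a surjection $\lambda\to\name{A}$ (possible since $p\forces|\name{A}|\le\lambda$); for $G\ni q$ the function $\name{g}^G$ lies in $M[G]$ and has domain $\lambda\subseteq M$, whence $\name{A}^G=\rng(\name{g}^G)\subseteq M[G]\cap{\rm Ord}=M\cap{\rm Ord}=B$.

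For (3), the regularity of $(\lambda^+)^{\bV}$ is a direct consequence of (2): a collapse of $(\lambda^+)^{\bV}$ or a drop of its cofinality would supply a condition forcing an unbounded map from an ordinal $\le\lambda$ into $(\lambda^+)^{\bV}$, and then (2) would yield a ground--model set $B$ of size $\lambda$ containing an unbounded subset of $(\lambda^+)^{\bV}$, contradicting the regularity of $\lambda^+$ in $\bV$. For stationarity I would use the reformulation: given $p$, a $\bbP$--name $\name{C}$ for a club of $\lambda^+$, and a stationary $S\subseteq\lambda^+$, it suffices to produce $q\ge p$ and $\delta\in S$ with $q\forces\delta\in\name{C}$. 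Passing to a stationary subset I may assume all members of $S$ have a fixed cofinality $\kappa\le\lambda$, and I would build a continuous increasing chain $\langle M_i:i<\lambda^+\rangle$ of elementary submodels of $\cH(\chi)$ of size $\lambda$, containing $\bbP,\name{C},S,p$, with $\langle M_j:j\le i\rangle\in M_{i+1}$, writing $\delta_i:=M_i\cap\lambda^+$ and fixing $\delta^*\in S$ with $M_{\delta^*}\cap\lambda^+=\delta^*$. When $\kappa=\lambda$ I would take the $M_i$ to be $({<}\lambda)$--closed, so that $\cf(\delta^*)=\lambda$ automatically, and simply invoke $\lambda$--properness to obtain an $(M_{\delta^*},\bbP)$--generic $q\ge p$; then $M_{\delta^*}[G]\cap\lambda^+=\delta^*$, so $q$ forces $\name{C}\cap\delta^*$ to be unbounded in $\delta^*$ and, by closure of $\name{C}$, forces $\delta^*\in\name{C}$. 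When $\kappa<\lambda$ I would fix a cofinal sequence $\langle\gamma_\zeta:\zeta<\kappa\rangle$ in $\delta^*$ coming from the chain and build a play of the completeness game of length $\kappa$ following $\st$ in which at step $\zeta$ one plays, by elementarity in a model $M_i$ with $\gamma_\zeta<\delta_i<\delta^*$, a condition forcing some $\eta_\zeta\in(\gamma_\zeta,\delta^*)$ into $\name{C}$, while $\st$ answers; since $\kappa<\lambda$ the play has an upper bound $q$, and $q$ forces $\{\eta_\zeta:\zeta<\kappa\}$ to be a cofinal subset of $\delta^*$ contained in $\name{C}$, hence $q\forces\delta^*\in\name{C}$.

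I expect the main obstacle to be the case $\kappa<\lambda$. One must run $\st$ so that every partial position and every answer stays inside $M_{\delta^*}$ — this is exactly what lets the opponent keep applying elementarity to pin the next witness $\eta_\zeta$ below $\delta^*$ — and simultaneously guarantee that the length--$\kappa$ play genuinely admits an upper bound. Reconciling these demands forces a careful choice of the filtration, taking the $M_i$ sufficiently closed under short sequences compatibly with continuity of the chain through the cofinality--$\kappa$ point $\delta^*$; this bookkeeping, rather than any single inequality, is where the real work lies.
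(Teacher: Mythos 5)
Your proposal is correct and is exactly the standard folklore argument that the paper itself does not reproduce (Theorem \ref{basic} is stated without proof, with a pointer to Hyttinen--Rautila): antichains of size $\le\lambda$ are swallowed by $M$ in the c.c.\ case, a length-$\lambda$ run of the strategy through the dense sets of $M$ handles the strategically $({\le}\lambda)$--complete case, (2) and the regularity claim follow by covering, and stationarity is preserved by splitting on the cofinality $\kappa$ of the guessed point, using genericity over a $({<}\lambda)$--closed $M_{\delta^*}$ when $\kappa=\lambda$ and a length-$\kappa$ run of the completeness strategy inside $M_{\delta^*}$ when $\kappa<\lambda$. The one point you flag as delicate is handled exactly as you suggest: requiring $M_i^{<\lambda}\subseteq M_{i+1}$ (rather than closure of each $M_i$) keeps every position of the short play, and hence every answer of the strategy, inside some $M_j$ with $j<\delta^*$, so elementarity pins each witness $\eta_\zeta$ below $\delta^*$.
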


Also chain condition results look similarly:

\begin{theorem}
[Folklore; cf.~Eisworth {\cite[Proposition 3.1]{Ei03}} and Abraham
{\cite[Theorems 2.10, 2.12]{Ab10}}]  
\label{cc}
Assume $\lambda^{<\lambda}=\lambda$, $2^{\lambda}=\lambda^+$,  and let
$\bar{\bbP}=\langle \bbP_i, \name{\bbQ}_i:i<\lambda^{++}\rangle$ be a
$\lambda$--support iteration such that  
\begin{enumerate}
\item[(a)] $\bbP_i$ is $\lambda$--proper for $i\leq\lambda^{++}$
\item[(b)] $\forces_{\bbP_i}\mbox{`` }|\name{\bbQ}_i|\leq\lambda^+\mbox{
    ''}$. 
\end{enumerate}
Then 
\begin{enumerate}
\item $\bbP_{\lambda^{++}}$ satisfies the $\lambda^{++}$--chain condition,
  and 
\item for each $i<\lambda^{++}$ we have $\forces_{\bbP_i}
  2^\lambda=\lambda^+$.  
\end{enumerate}
\end{theorem}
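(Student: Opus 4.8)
The plan is to establish both parts by a single induction on $i\le\lambda^{++}$, carrying three statements simultaneously: (i) $\bbP_i$ has a dense subset of size $\le 2^{\lambda^+}$; (ii) $\forces_{\bbP_i}2^\lambda=\lambda^+$; and (iii) $\bbP_i$ satisfies the $\lambda^{++}$--chain condition. Conclusion (2) of the theorem is (i) together with (ii) for $i<\lambda^{++}$, while (1) is (iii) at $i=\lambda^{++}$. The induction hypothesis (iii) at levels below $i$ is exactly what feeds the nice--name counting that keeps (i) and (ii) under control, so the three statements must travel together.

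For the counting in (i) I would argue at successors that, since $\forces_{\bbP_i}|\name{\bbQ}_i|\le\lambda^+$, every element of $\name{\bbQ}_i$ is coded by a $\bbP_i$--name for an ordinal $<\lambda^+$, and such a name is a $\lambda^+$--sequence of antichains of $\bbP_i$. Using (iii) each antichain has size $\le\lambda^+$, so with $|\bbP_i|\le 2^{\lambda^+}$ there are at most $(2^{\lambda^+})^{\lambda^+}=2^{\lambda^+}$ nice names and hence $|\bbP_{i+1}|\le 2^{\lambda^+}$. At limits $i<\lambda^{++}$ one has $i\le\lambda^+$ and supports of size $\le\lambda$, and the bound $(2^{\lambda^+})^\lambda=2^{\lambda^+}$ together with $\lambda^{<\lambda}=\lambda$ does the job; at $i=\lambda^{++}$ the number of possible supports is $(\lambda^{++})^\lambda=\lambda^{++}$ (here $2^\lambda=\lambda^+$ is used), which is harmless since only (iii) is claimed at the top.

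For the chain condition (iii) I would take any $\langle p_\eta:\eta<\lambda^{++}\rangle$ and apply the $\Delta$--system lemma to $\langle\dom(p_\eta):\eta<\lambda^{++}\rangle$; this is legitimate because $\lambda^{++}$ is regular and $(\lambda^+)^\lambda=\lambda^+<\lambda^{++}$ by $2^\lambda=\lambda^+$. Refining to a system of size $\lambda^{++}$ with root $r$, the crucial point is an amalgamation lemma: whenever $\dom(p)\cap\dom(q)\subseteq r$, the conditions $p,q$ are compatible in $\bbP_i$ as soon as $p\rest r$ and $q\rest r$ are, the common extension being obtained by gluing the disjoint tails onto a common extension of the root parts. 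When $\cf(i)>\lambda$ the root satisfies $\sup r<i$, so the root parts live in some $\bbP_{i^*}$ with $i^*<i$ (or $i^*<\lambda^{++}$ when $i=\lambda^{++}$), and (iii) for $\bbP_{i^*}$ forbids $\lambda^{++}$ pairwise incompatible root parts; hence two of the $p_\eta$ are compatible.

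The two places I expect genuine work are the cofinal--root limit case and statement (ii). When $\cf(i)\le\lambda$ the root may be cofinal in $i$, so one cannot simply reflect to a bounded stage; here I would refine the $\Delta$--system by an auxiliary pressing--down along a cofinal sequence of length $\cf(i)$ so that the supports split above some common level, reducing again to compatibility of bounded pieces. Statement (ii) is the real obstacle: naive nice--name counting only yields $\forces 2^\lambda\le 2^{\lambda^+}$, so to get equality with $\lambda^+$ one must use $\lambda$--properness in the sense of Definition \ref{properdef} to reflect each $\bbP_i$--name for a subset of $\lambda$ into an elementary submodel $M\prec\cH(\chi)$ with $|M|=\lambda$ and $M^{<\lambda}\subseteq M$; the $M$--generic condition provided by $\lambda$--properness (together with Theorem \ref{basic}) forces the realized subset to be computed inside $M$, so that the extension sees at most $\lambda^+$ subsets of $\lambda$. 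Carrying this reflection through the limit stages, compatibly with the counting for (i), is the delicate heart of the argument.
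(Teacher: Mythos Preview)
Your treatment of (i) and of the chain condition (iii) is essentially the standard one the paper defers to Eisworth and Abraham for, and your worry about the root being cofinal when $\cf(i)\le\lambda$ is legitimate but handled routinely there. The substantive divergence from the paper is in (ii), and here your sketch has a real gap.

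You correctly observe that nice--name counting only yields $2^\lambda\le 2^{\lambda^+}$ and that $\lambda$--properness and elementary submodels must enter. But the step ``the $M$--generic condition forces the realized subset to be computed inside $M$, so the extension sees at most $\lambda^+$ subsets of $\lambda$'' does not go through as written. The model $M$ depends on the name $\name{A}$: different names sit in different models, and there is no a priori bound of $\lambda^+$ on the number of relevant models (indeed there are $2^{\lambda^+}$ candidate names). Saying each subset is determined by \emph{some} $M$ of size $\lambda$ does not by itself bound the total count; you must compare the various $M$'s to one another.

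The paper closes exactly this gap with an isomorphism argument. One supposes toward contradiction that some $p\in\bbP_i$ forces a one--to--one $\name{x}:\lambda^{++}\to 2^\lambda$, and for each $\xi<\lambda^{++}$ picks $M_\xi\prec\cH(\chi)$ of size $\lambda$, closed under ${<}\lambda$--sequences, with $\xi,\bar{\bbP},i,p,\name{x}\in M_\xi$. A $\Delta$--system style ``cleaning'' (as in Eisworth's proof of the chain condition) produces a stationary $I\subseteq\cS^{\lambda^{++}}_{\lambda^+}$, a common intersection $H$, and $\in$--isomorphisms $\pi_{\xi,\zeta}:M_\xi\to M_\zeta$ fixing $H$ pointwise and sending $\xi\mapsto\zeta$, $\name{x}\mapsto\name{x}$, $p\mapsto p$. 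The punchline is that any $(M_\xi,\bbP_i)$--generic $r\ge p$ already forces $\name{x}(\xi)=\name{x}(\zeta)$ for all $\zeta\in I$ above $\xi$: if $r^+\ge r$ forced $\name{x}(\xi)(\alpha)=\ell\ne\name{x}(\zeta)(\alpha)$, pick $s\in M_\xi$ compatible with $r^+$ forcing $\name{x}(\xi)(\alpha)=\ell$; then $\pi_{\xi,\zeta}(s)$ forces $\name{x}(\zeta)(\alpha)=\ell$, and the amalgamation property underlying the chain--condition proof (Eisworth's Claim~3.4) shows $\pi_{\xi,\zeta}(s)$ is also below any common extension of $s$ and $r^+$, a contradiction. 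This isomorphism comparison is the missing idea in your outline.
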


\begin{proof}
  Abraham \cite[Theorem 2.10, Theorem 2.12]{Ab10} gives the full proof of
  the theorem for CS iterations. Eisworth \cite[Proposition 3.1]{Ei03}
  presents careful justification of \ref{cc}(1). Let us sketch arguments
  justifying \ref{cc}(2)

  To argue that $\forces_{\bbP_i}$`` $2^\lambda=\lambda^+$ '' suppose
  towards contradiction that $p\in \bbP_i$, $i<\lambda^{++}$, and
  $p\forces_{\bbP_i}$`` $\name{x}:\lambda^{++}\longrightarrow 2^\lambda$ is
  one-to-one ''. For each $\xi<\lambda^{++}$ fix a model $M_\xi\prec
  \cH(\chi)$ such that
\[\xi,\bar{\bbP},i,p,\name{x}\in M_\xi,\quad |M_\xi|=\lambda\quad \mbox{ and
}\quad {}^{<\lambda}M_\xi\subseteq M_\xi.\]
Applying ``a cleaning procedure'' as in the proof of \cite[Proposition
3.1]{Ei03} we may choose a set $I\subseteq \lambda^{++}$ of size
$\lambda^{++}$, a set $H\subseteq\lambda^{++}$ and mappings
$\pi_{\xi,\zeta}$ (for $\xi,\zeta\in I$ satisfying $\xi<\zeta$) such that:  
\begin{enumerate}
\item[(a)] $\pi_{\xi,\zeta}:M_\xi\stackrel{\rm onto}{\longrightarrow}
  M_\zeta$ is an $\in$--isomorphism, $\pi_{\xi,\zeta}(\xi)=\zeta$,
  $\pi_{\xi,\zeta}(\name{x})=\name{x}$, $\pi_{\xi,\zeta}(p)=p$,
\item[(b)] $M_\xi\cap M_\zeta\cap \lambda^{++}=H$,
\item[(c)] $H\subseteq \min(M_\xi\cap\lambda^{++}\setminus H)$ and
  $M_\xi\cap \lambda^{++}\subseteq \min(M_\zeta\cap\lambda^{++}\setminus
  H)$, and 
\item[(d)] $i\in H$, $\pi_{\xi,\zeta}\rest H={\rm id}_H$, $\lambda\subseteq
  H$.  
\end{enumerate}
The following claim will immediately complete the proof. 

\begin{claim}
If $r\in\bbP_i$ is $(M_\xi,\bbP_i)$--generic and $r\geq p$ and
$\xi<\zeta$ are from $I$, then $r\forces_{\bbP_i}\name{x}(\xi) 
=\name{x}(\zeta)$.      
\end{claim}

\begin{proof}[Proof of the Claim]
 Suppose towards contradiction that $r^*\geq r$, $r^*\forces
 \name{x}(\xi)\neq \name{x}(\zeta)$. Take $r^+\geq r^*$ and $\ell<2$,
 $\alpha<\lambda$ such that
\[r^+\forces \mbox{`` }\name{x}(\xi)(\alpha)=\ell\mbox{ and
}\name{x}(\zeta)(\alpha) =1-\ell\mbox{ ''.}\]
The condition $r^+$ is $(M_\xi,\bbP_i)$--generic, so there is $s\in
M_\xi\cap \bbP_i$ such that $s\forces_{\bbP_i} \name{x}(\xi)(\alpha)=\ell$
and $r^+,s$ are compatible, say $q\geq r^+,s$. Then $q$ is
$(M_\xi,\bbP_i)$--generic so by \cite[Claim 3.4]{Ei03} we also have
$q\geq\pi_{\xi,\zeta}(s)$. But $\pi_{\xi,\zeta}(s)\forces
\pi_{\xi,\zeta}(\name{x})
(\pi_{\xi,\zeta}(\xi))(\pi_{\xi,\zeta}(\alpha))=\ell$, so
$\pi_{\xi,\zeta}(s)\forces \name{x}(\zeta)(\alpha)=\ell$. Hence $q\forces
\name{x}(\xi)(\alpha)= \name{x}(\zeta)(\alpha)$, contradicting the choice of
$r^+$ and $q\geq r^+$. 
\end{proof}
\end{proof}

What is missing? The main point of properness is the preservation
theorem for CS iterations. If one tries to repeat the proof of the
preservation theorem for $\lambda$--support iterations of $\lambda$--proper
forcing notions, then one faces difficulties at limit stages of cofinality
fewer than $\lambda$ caused by the fact that it is inconvenient to
diagonalize $\lambda$ objects in less than $\lambda$ steps. This is a more
serious obstacle than just a technicality. Let us consider the following
forcing notion.   

\begin{example}
[Shelah {\cite[Appendix]{Sh:b}}]
\label{exam1}
Assume that $\lambda=\lambda^{<\lambda}$ is an uncountable cardinal and let
$\cS^{\lambda^+}_\lambda\stackrel{\rm  def}{=}\{
\delta<\lambda^+: \cf(\delta)=\lambda\}$. Suppose that a sequence $\langle 
A_\delta, h_\delta:\delta\in\cS^{\lambda^+}_\lambda\rangle$ is such
that for each $\delta\in\cS^{\lambda^+}_\lambda$: 
\begin{enumerate}
\item[(a)] $A_\delta\subseteq\delta$, $\otp(A_\delta)=\lambda$ and
$A_\delta$ is a club of $\delta$, and  
\item[(b)] $h_\delta:A_\delta\longrightarrow 2$.
\end{enumerate}
The forcing notion $\bbQ^*=\bbQ^*(\langle A_\delta,h_\delta:\delta\in
\cS^{\lambda^+}_\lambda\rangle)$ is defined as follows:

\noindent{\bf a condition in $\bbQ^*$}\quad is a tuple $p=(u^p,v^p,
\bar{e}^p,h^p)$ such that 
\begin{enumerate}
\item[(a)] $u^p\in [\lambda^+]^{<\lambda}$, $v^p\subseteq 
\cS^{\lambda^+}_\lambda\cap u^p$,   
\item[(b)] $\bar{e}^p=\langle e^p_\delta:\delta\in v^p\rangle$, where each
$e^p_\delta$ is a closed bounded non-empty subset of $A_\delta$, and
$e^p_\delta\subseteq u^p$, and
\item[(c)] if $\delta\in v^p$, then $\max(e^p_\delta)=\sup(u^p\cap\delta) >
  \sup(v^p\cap\delta)$, 
\item[(d)] $h^p:u^p\longrightarrow 2$ is such that for each $\delta\in
v^p$ we have that $h^p\restriction e_\delta\subseteq h_\delta$. 
\end{enumerate}
\noindent{\bf The order $\leq$ of $\bbQ^*$}\quad is such that $p\leq
q$ if and only if $u^p\subseteq u^q$, $h^p\subseteq h^q$, $v^p\subseteq
v^q$, and for each $\delta\in v^p$ the set $e^q_\delta$ is an end-extension
of $e^p_\delta$. 
\end{example}

Plainly, under the assumptions of \ref{exam1}, the following holds true. 

\begin{observation}
The forcing notion $\bbQ^*$ is $({<}\lambda)$--complete and
$|\bbQ^*|=\lambda^+$. It satisfies the $\lambda^+$--chain 
condition, so it is also $\lambda$--proper.
\end{observation}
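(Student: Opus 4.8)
The plan is to verify the three structural properties in turn and then invoke Theorem \ref{basic}(1). Throughout I use that $\lambda=\lambda^{<\lambda}$ forces $\lambda$ to be regular, so a union of fewer than $\lambda$ sets each of size $<\lambda$ again has size $<\lambda$, and that for $\delta\in\cS^{\lambda^+}_\lambda$ we have $\cf(\delta)=\lambda$, whence any subset of $\delta$ of size $<\lambda$ is bounded and $A_\delta\cap(\beta,\delta)$ is unbounded in $\delta$ for every $\beta<\delta$. The cardinality claim $|\bbQ^*|=\lambda^+$ is the easy part: a condition is coded by $(u^p,v^p,\bar e^p,h^p)$ with $u^p\in[\lambda^+]^{<\lambda}$, and since $|[\lambda^+]^{<\lambda}|=(\lambda^+)^{<\lambda}=\lambda^+$ while for a fixed $u^p$ of size $<\lambda$ there are at most $2^{<\lambda}\cdot\lambda^{<\lambda}=\lambda$ admissible choices of $(v^p,\bar e^p,h^p)$, the total is $\lambda^+\cdot\lambda=\lambda^+$; the lower bound is witnessed by the singletons.

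For $(<\lambda)$--completeness I would take an increasing sequence $\langle p_\xi:\xi<\gamma\rangle$ with $\gamma<\lambda$ and build an upper bound $q$ by unions: $u^q\supseteq\bigcup_\xi u^{p_\xi}$, $v^q=\bigcup_\xi v^{p_\xi}$, $h^q\supseteq\bigcup_\xi h^{p_\xi}$, and for $\delta\in v^q$ take $e^q_\delta$ to end-extend $\bigcup\{e^{p_\xi}_\delta:\delta\in v^{p_\xi}\}$. The routine points are that the $h^{p_\xi}$ cohere and that $\bigcup_\xi e^{p_\xi}_\delta$ is an increasing union of initial segments, so all its limit points below its supremum already lie in it. The subtle point — and I expect this to be the first real obstacle — is clause (c): the naive union can fail $\max(e^q_\delta)=\sup(u^q\cap\delta)>\sup(v^q\cap\delta)$, because both the $u$-sup and the $v$-sup below $\delta$ may climb to the same limit $\beta_\delta:=\sup(u^q\cap\delta)<\delta$ (one can cook up such a chain). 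The fix is to \emph{push up}: choose a fresh $M_\delta\in A_\delta\cap(\beta_\delta,\delta)$ (possible as $A_\delta$ is club in $\delta$), set $e^q_\delta=\cl\big(\bigcup_\xi e^{p_\xi}_\delta\big)\cup\{M_\delta\}$, and put $M_\delta$ (and the closure point $\beta_\delta$, if new) into $u^q$ with the dictated values $h_\delta(M_\delta)$, $h_\delta(\beta_\delta)$. Since $\delta\in u^q$ whenever $\delta\in v^q$, the closure points $\beta_\delta$ are strictly increasing in $\delta$, hence pairwise distinct, and the $M_\delta$ can be chosen mutually distinct and off $\bigcup_\xi u^{p_\xi}$; this is what makes $h^q$ well defined and clause (d) survive. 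Because each $M_\delta,\beta_\delta<\delta$, adding them does not disturb the suprema below larger elements of $v^q$, so (c) now holds everywhere.

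For the $\lambda^+$--chain condition I would start from $\lambda^+$ conditions $\langle p_i:i<\lambda^+\rangle$ and thin out. First apply the $\Delta$--system lemma (available since $\lambda$ is regular and $|\alpha|^{<\lambda}\le\lambda<\lambda^+$ for $\alpha<\lambda^+$) to get a root $R\in[\lambda^+]^{<\lambda}$ with $u^{p_i}\cap u^{p_j}=R$ on a subfamily of size $\lambda^+$. Then refine by pigeonhole so that the whole \emph{trace} on $R$ is constant: $v^{p_i}\cap R$, $h^{p_i}\restriction R$, and — crucially — the sequence $\langle e^{p_i}_\delta:\delta\in v^{p_i}\cap R\rangle$. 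There are only $\lambda$ possible traces, so a subfamily of size $\lambda^+$ survives. The key gain from recording the $e_\delta$'s is this: if $\delta\in v^{p_i}\cap v^{p_j}$ then $\delta\in R$ and $e^{p_i}_\delta=e^{p_j}_\delta$ is a fixed set contained in $u^{p_i}\cap u^{p_j}=R$; this is exactly what is needed, since two conditions can be merged at $\delta$ only if their $e_\delta$'s are comparable (each must be an \emph{initial segment} of the common $e^q_\delta$), and off the root the parts are disjoint, so comparability forces both into $R$ and equal. I expect this matching of the $e_\delta$'s at root points to be the main conceptual obstacle.

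Granting the matching, I would show any two surviving $p=p_i$, $p'=p_j$ are compatible by the same push-up merge: set $u^q=u^p\cup u^{p'}$ augmented by fresh points $M_\delta\in A_\delta\cap(\gamma^*_\delta,\delta)$ where $\gamma^*_\delta=\sup((u^p\cup u^{p'})\cap\delta)<\delta$, put $v^q=v^p\cup v^{p'}$, $h^q=h^p\cup h^{p'}$ (a function because they agree on the root $R=u^p\cap u^{p'}$), and $e^q_\delta=e^p_\delta\cup e^{p'}_\delta\cup\{M_\delta\}$ for $\delta\in v^q$ (interpreting a missing term as empty). Processing the $\delta\in v^q$ in increasing order keeps each $M_\delta$ above everything already placed below $\delta$, so $\max(e^q_\delta)=M_\delta=\sup(u^q\cap\delta)>\sup(v^q\cap\delta)$ and (c) holds; closedness and end-extension are immediate as only two conditions are involved, and (d) holds because the new points $M_\delta$ take the value $h_\delta(M_\delta)$ and lie in no other $e^q_{\delta'}$. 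Hence $q\ge p,p'$, no antichain has size $\lambda^+$, and $\bbQ^*$ has the $\lambda^+$--chain condition. Finally, $\lambda$--properness is then immediate from Theorem \ref{basic}(1).
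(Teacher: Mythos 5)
Your proof is correct. The paper offers no argument for this Observation (it is introduced only with ``Plainly, \dots the following holds true''), and your write-up is the standard verification; it correctly isolates and handles the only two non-routine points, namely restoring clause (c) at limit stages of a chain by appending a fresh point of $A_\delta\cap(\sup(u^q\cap\delta),\delta)$ to $e^q_\delta$ and to $u^q$ with the $h_\delta$--dictated value, and, in the $\Delta$--system refinement for the chain condition, fixing the traces $\langle e^{p_i}_\delta:\delta\in v^{p_i}\cap R\rangle$ so that two conditions sharing a $\delta\in v$ have comparable (indeed equal) $e_\delta$'s contained in the root.
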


If $\lambda=\lambda^{<\lambda}$ is not inaccessible, $2^{\lambda^+}=
\lambda^{++}$ and $2^\lambda= \lambda^+$, then some $\lambda$--support
iterations of forcing notions like $\bbQ^*$ are not $\lambda$--proper, as a
matter of fact this bad effect happens quite often. The problem comes from
the fact that if $\lambda$--support iterations of forcings of type $\bbQ^*$
were $\lambda$--proper, we could use Theorem \ref{cc} and a suitable
bookkeeping device to build a forcing notion which forces ``$\lambda= 
\lambda^{<\lambda}$ is not inaccessible and uniformization for
continuous ladder systems holds true''. However, this is not possible: 

\begin{theorem}
[Shelah {\cite{Sh:b}, \cite[Appendix, Theorem 3.6(2)]{Sh:f}}]
Assume $\theta<\lambda={\rm cf}(\lambda)$, $2^\theta=2^{<\lambda}=\lambda$.  
Furthermore suppose that for each $\delta\in \cS^{\lambda^+}_\lambda$ we
have a club $A_\delta$ of $\delta$. Then we can find  a sequence $\langle
d_\delta:\delta\in\cS^{\lambda^+}_\lambda\rangle$ of colorings such that 
\begin{itemize}
\item $d_\delta:A_\delta\longrightarrow 2$ and 
\item for any $h:\lambda^+\longrightarrow \{0,1\}$ for stationarily
many $\delta\in\cS^{\lambda^+}_\lambda$, the set $\{i\in A_\delta:
d_\delta(i)\neq h(i)\}$ is stationary in $A_\delta$.
\end{itemize}
\end{theorem}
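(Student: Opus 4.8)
The plan is to reduce the assertion to an \emph{anti-guessing} statement and then to extract the needed guessing from the cardinal arithmetic $2^\theta=2^{<\lambda}=\lambda$ together with club guessing on $\cS^{\lambda^+}_\lambda$. First I would reformulate the conclusion contrapositively: a colouring sequence $\langle d_\delta:\delta\in\cS^{\lambda^+}_\lambda\rangle$ \emph{fails} the requirement precisely when some $h:\lambda^+\to 2$ is a \emph{club-uniformizer}, i.e.\ there is a club $E\subseteq\lambda^+$ such that for every $\delta\in E\cap\cS^{\lambda^+}_\lambda$ the set $\{i\in A_\delta:d_\delta(i)\neq h(i)\}$ is non-stationary in $A_\delta$ (equivalently, $h\restriction A_\delta$ agrees with $d_\delta$ on a club of $A_\delta$, where ``stationary in $A_\delta$'' is read through the increasing enumeration of $A_\delta$, an ordinal of cofinality $\lambda$). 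Thus it suffices to produce one $\langle d_\delta\rangle$ admitting no club-uniformizer. The essential difficulty sits in the quantifier order: $\langle d_\delta\rangle$ must be fixed \emph{before} $h$, so a colouring following one fixed pattern along all ladders cannot succeed, and the construction must genuinely diagonalise against the specific global $h$.

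The main tool I would build is a guessing device catching each potential $h$ on a large set. Using $2^\theta=2^{<\lambda}=\lambda$ I would set up a Shelah-style black box: an approachable skeleton $\langle\bar N^\delta:\delta\in\cS^{\lambda^+}_\lambda\rangle$, where $\bar N^\delta=\langle N^\delta_\zeta:\zeta<\lambda\rangle$ is an $\in$-increasing continuous chain of elementary submodels of $\cHl$ of size $<\lambda$ with $\bigcup_{\zeta<\lambda}N^\delta_\zeta$ cofinal in $\delta$, together with guessed local values, so that for every $h:\lambda^+\to 2$ there are stationarily many $\delta$ at which the guess agrees with $h$ along a club of coordinates $\zeta$. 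Here $2^\theta=\lambda$ is what manufactures the combinatorial trees of height $\theta$ underlying the black box (so that the candidate local guesses are indexed by a set of size $\lambda$, namely ${}^\theta 2$), while $2^{<\lambda}=\lambda$ (whence $\lambda^{<\lambda}=\lambda$) guarantees that each short piece $h\restriction N^\delta_\zeta$ ranges over a set of size $\lambda$ the device can enumerate. In parallel I would fix, by Shelah's club guessing for the regular pair $\lambda<\lambda^+$, clubs $C_\delta\subseteq A_\delta$ of order type $\lambda$ guessing every club of $\lambda^+$, and arrange that initial segments of $C_\delta$ sit inside the models $N^\delta_\zeta$, so that the two devices interact.

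With the device in hand the colouring is defined by \emph{anti-prediction}: at each coordinate where the black box offers a guess for the value $h$ would take at the corresponding point of $C_\delta$, I set $d_\delta$ to the opposite value, and $d_\delta\equiv 0$ elsewhere on $A_\delta$. Then, given any putative club-uniformizer $(h,E)$, club guessing yields stationarily many $\delta$ with $\delta\in E$ and $C_\delta\subseteq E$; for those $\delta$ that also lie in the black box's stationary catching set the guess equals $h$ along a club of coordinates, so $d_\delta$ disagrees with $h$ on a club — a fortiori stationary — subset of $C_\delta\subseteq A_\delta$, contradicting non-stationarity of the disagreement set. A Fodor/pressing-down step confirms that the $\delta$ surviving both $\{\delta:C_\delta\subseteq E\}$ and the catching set form a stationary set, completing the argument.

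The hard part is exactly the guessing step: arranging that a \emph{single} pre-chosen device catches \emph{every} $h$ on a set that is at once stationary in $\lambda^+$ and, along each caught ladder, large enough in the coordinates to force \emph{stationary} disagreement inside $A_\delta$. The delicacy is that each $N^\delta_\zeta$ has size $<\lambda<\cf(\delta)=\lambda$ and so is bounded in $\delta$, whence no single model catches an unbounded chunk of the ladder; the catching must therefore be organised along the whole approachable chain, climbing cofinally up $\delta$, and one must verify that the correctly-guessed coordinates are stationary in $A_\delta$ for stationarily many $\delta$. It is precisely in securing this — without the luxury of $2^\lambda=\lambda^+$, i.e.\ without full $\diamondsuit$ on $\cS^{\lambda^+}_\lambda$ — that the black box built from $2^\theta=2^{<\lambda}=\lambda$ does its real work, and I expect the bulk of the effort to go into its construction and catching lemma.
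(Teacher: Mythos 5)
First, a point of comparison: the paper does not prove this statement at all --- it is quoted from Shelah's \emph{Proper Forcing} book with an explicit citation, and serves only to motivate why iterations of $\bbQ^*$ can fail to be $\lambda$--proper. So your proposal can only be measured against the cited source and on its own terms. On its own terms it has the right general flavour (produce a prediction device from $2^\theta=2^{<\lambda}=\lambda$ and then anti-predict), but two steps are genuinely unsupported. The central one is the ``catching lemma'' for your black box: you require a device, fixed in advance, which for \emph{every} $h:\lambda^+\to 2$ agrees with $h$ exactly along a \emph{club} of coordinates of the ladder at stationarily many $\delta\in\cS^{\lambda^+}_\lambda$. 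That is a $\diamondsuit$--strength guessing principle on $\cS^{\lambda^+}_\lambda$ (up to a nonstationary error on each ladder), and nothing in the hypotheses delivers it: you are given neither $2^\lambda=\lambda^+$ (needed even to have only $\lambda^+$ many candidate guesses) nor $2^\lambda<2^{\lambda^+}$ (the weak-diamond hypothesis), and there are $2^\lambda$ pairwise club-inequivalent functions on each $A_\delta$ to be caught. Your sketch never explains how the parameter $\theta$ and the equality $2^\theta=\lambda$ actually enter, beyond the phrase ``trees of height $\theta$ underlying the black box''; but that is precisely where all the content of the theorem lives. Note also that you are demanding more than the statement needs: since there are only two colours, stationary disagreement is equivalent (after flipping bits) to stationary agreement, so only \emph{stationary} --- not club --- correctness of the guess is required, and the $2^\theta=\lambda$ hypothesis is exactly calibrated to yield that weaker, ``middle-diamond''-style conclusion via separating families on blocks of order type $\theta$, not full club-agreement.

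The second gap is the concluding step. You invoke club guessing to get stationarily many $\delta$ with $C_\delta\subseteq E$, and separately a stationary catching set from the black box, and then assert that ``a Fodor/pressing-down step confirms'' their intersection is stationary. Two stationary subsets of $\cS^{\lambda^+}_\lambda$ can be disjoint, and no pressing-down argument repairs this unless the two devices are constructed jointly so that one already refines the other. Fortunately the club-guessing component is superfluous: the negation of the conclusion supplies a club $E$ such that the disagreement is non-stationary for all $\delta\in E\cap\cS^{\lambda^+}_\lambda$, and any stationary set of $\delta$'s on which your device forces stationary disagreement automatically meets $E$. So you should delete the club-guessing layer entirely and concentrate the argument where the real work is: constructing, from $2^\theta=2^{<\lambda}=\lambda$ alone, a sequence $\langle d_\delta\rangle$ that disagrees stationarily with every $h$ at stationarily many $\delta$ --- which is the theorem itself, not a lemma one may defer.
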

\medskip

Many positive results concerning not collapsing cardinals in iterations with
uncountable supports are presented in literature already. For instance,
Kanamori \cite{Ka80} considered iterations of $\lambda$--Sacks forcing
notion (similar to the forcing $\bbQ^{2,\bar{E}}$; see Definition
\ref{defFilter} and Remark \ref{oldfor}) and he proved that under some
circumstances these iterations preserve $\lambda^+$. Several conditions
ensuring that $\lambda^+$ is not collapsed in $\lambda$--support iterations
were introduced in Shelah \cite{Sh:587,Sh:667}. Fusion properties of
iterations of tree--like forcing notions were used in Friedman and Zdomskyy
\cite{FrZd10} and Friedman, Honzik and Zdomskyy \cite{FrHoZd13}.

Numerous strong versions of $\lambda$--properness were studied by the
authors in a series of articles Ros{\l}anowski and Shelah \cite{RoSh:655,
  RoSh:860, RoSh:777, RoSh:888, RoSh:890, RoSh:942}. Each of those
conditions was meant to be applicable to some natural forcing notions adding
a new member of ${}^\lambda\lambda$ without adding new elements of
${}^{{<}\lambda} \lambda$. In some sense, they explained why the relevant
forcings can be iterated (without collapsing cardinals).

Also Eisworth \cite{Ei03} gave another (simpler) relative of the 
preservation theorem of \cite{RoSh:655}. 

The properties introduced in \cite{RoSh:655} and subsequent works we
typically applicable to either very bounding forcing notions or forcings
with trees with splittings on a stationary co-stationary set of levels.
While those properties were quite general, we had problems to include the
forcing notion $\bbQ^2_\lambda$ (see \ref{defFilter} below) in that
framework. In \cite{RoSh:942} we managed to formulate a suitable property
and show a relevant iteration theorem covering the forcing notions we were
interested in (including $\bbQ^2_\lambda$) in the case when $\lambda$ is
strongly inaccessible. Here we present a framework covering those forcings,
including our ``last forcing standing'', under a very mild demand on
$\lambda$: it admits diamonds\footnote{This should explain the title of this
  paper.}.  In a sense, we generalize (and correct, see Remark
\ref{correct655}) the property considered in \cite{RoSh:655}.  Our property,
{\em the purely sequential properness over semi diamonds}, is also close to
the fuzzy properness over quasi--diamonds of \cite[Definition
A.3.6]{RoSh:777}.

\medskip

\noindent {\bf Notation and Terminology.}\qquad Our notation is rather
standard and compatible with that of classical textbooks (like Jech
\cite{J}). However, in forcing we keep the convention that {\em a stronger
  condition is the larger one}.  

\begin{enumerate}
\item Ordinal numbers will be denoted be the lower case initial letters of
the Greek alphabet $\alpha,\beta,\gamma,\delta,\vare$ and $\zeta$, and also
by $\xi,i,j$ (with possible sub- and superscripts).  

Cardinal numbers will be called $\kappa,\lambda$; {\bf $\lambda$ will
  be always assumed to be a regular uncountable cardinal such that
  $\lambda^{<\lambda}=\lambda$}. 

Also, $\chi$ will denote a {\em sufficiently large\/} regular cardinal;
$\cH(\chi)$ is the family of all sets hereditarily of size less than
$\chi$. Moreover, we fix a well ordering $<^*_\chi$ of $\cH(\chi)$.

\item We will consider several games of two players. One player will be
  called {\em Generic\/} or {\em Complete\/}, and we will refer to this
  player as ``she''. Her opponent will be called {\em Antigeneric\/} or {\em
    Incomplete} and will be referred to as ``he''.

\item For a forcing notion $\bbP$, all $\bbP$--names for objects in
  the extension via $\bbP$ will be denoted with a tilde below (e.g.,
  $\name{\tau}$, $\name{X}$), and $\name{G}_\bbP$ will stand for the
  canonical $\bbP$--name for the generic filter in $\bbP$. The weakest
  element of $\bbP$ will be denoted by $\emptyset_\bbP$ (and we will always
  assume that there is one, and that there is no other condition  equivalent
  to it). 

  By ``$\lambda$--support iterations'' we mean iterations in which domains
  of conditions are of size $\leq\lambda$. However, we will pretend that
  conditions in a $\lambda$--support iteration $\bar{\bbQ}=\langle
  \bbP_\zeta, \name{\bbQ}_\zeta:\zeta< \zeta^* \rangle$ are total functions
  on $\zeta^*$ and for $p\in\lim(\bar{\bbQ})$ and $\alpha\in\zeta^*
  \setminus \dom(p)$ we will stipulate $p(\alpha)=
  \name{\emptyset}_{\name{\bbQ}_\alpha}$. 

\item A filter on $\lambda$ is a non-empty family of subsets of $\lambda$
  closed under supersets and intersections and not containing $\emptyset$. A
  filter is $({<}\kappa)$--complete if it is closed under intersections of
  ${<}\kappa$ members. (Note: we do allow principal filters or even
  $\{\lambda\}$.)

For a filter $D$ on $\lambda$, the family of all $D$--positive subsets
of $\lambda$ is called $D^+$. (So $A\in D^+$ if and only if $A\subseteq
\lambda$ and $A\cap B\neq\emptyset$ for all $B\in D$.) By a normal filter on
$\lambda$ we mean {\em proper uniform\/} filter closed under diagonal
intersections.  

\item By a {\em sequence\/} we mean a function whose domain is a set of
  ordinals. For two sequences $\eta,\nu$ we write $\nu\vtl\eta$ whenever
  $\nu$ is a proper initial segment of $\eta$, and $\nu \trianglelefteq\eta$
  when either $\nu\vtl\eta$ or $\nu=\eta$.  The length of a sequence $\eta$
  is the order type of its domain and it is denoted by $\lh(\eta)$.

The set of all sequences with domain $\alpha$ and with values in $A$ is
denoted by ${}^\alpha A$ and we set ${}^{<\alpha}
A=\bigcup\limits_{\beta<\alpha} {}^\beta A$. 

\item A {\em tree\/} is a $\vtl$--downward closed set of sequences. A {\em
    complete $\lambda$--tree\/} is a tree $T\subseteq {}^{<\lambda}\lambda$
  such that every $\vtl$-chain of size less than $\lambda$ has a
  $\vtl$-bound in $T$ and for each $\eta\in T$ there is $\nu\in T$ such that 
  $\eta\vtl\nu$.

Let $T\subseteq {}^{<\lambda}\lambda$ be a tree. For $\eta\in T$ we let 
\[\suc_T(\eta)=\{\alpha<\lambda:\eta\conc\langle\alpha\rangle\in T\}\quad
\mbox{ and }\quad (T)_\eta=\{\nu\in T:\nu\vtl\eta\mbox{ or }\eta
\trianglelefteq \nu\}.\]
We also let $\mrot(T)$ be the shortest $\eta\in T$ such that
$|\suc_T(\eta)|>1$ and $\lim_\lambda(T)=\{\eta\in{}^\lambda\lambda: (\forall
\alpha<\lambda)(\eta\rest\alpha\in T)\}$.
\end{enumerate}

\section{Preliminaries}

\subsection{Iterations of strategically complete forcing notions} 
\begin{definition}
\label{comp}
Let $\bbP$ be a forcing notion.
\begin{enumerate}
\item For an ordinal $\alpha$, let $\Game_0^\alpha(\bbP)$ be the following
  game of two players, {\em Complete} and  {\em Incomplete}:    
\medskip

\noindent the game lasts at most $\alpha$ moves and during a play the
players attempt to construct a sequence $\langle (p_i,q_i): i<\alpha\rangle$
of pairs of conditions from $\bbP$ in such a way that  
\[(\forall j<i<\alpha)(p_j\leq q_j\leq p_i)\]
and at the stage $i<\alpha$ of the game, first Incomplete chooses $p_i$ and
then Complete chooses $q_i$.   
\medskip

\noindent Complete wins if and only if for every $i<\alpha$ there are
legal moves for both players. 
\item A winning strategy $\st^0$ of Complete in $\Game_0^\alpha(\bbP)$ is
  {\em regular\/} if it instructs Complete to play $\emptyset_\bbP$ as long
  as Incomplete plays $\emptyset_\bbP$. 
\item The forcing notion $\bbP$ is {\em strategically
$({<}\lambda)$--complete\/} ({\em strategically
$({\leq}\lambda)$--complete,} respectively) if Complete has a winning
strategy in the game  $\Game_0^\lambda(\bbP)$ ($\Game_0^{\lambda+1}(\bbP)$,
respectively). 

Note that then Complete has also a regular winning strategy.
\item Let a model $N\prec (\cH(\chi),\in,<^*_\chi)$ be such that
${}^{<\lambda} N\subseteq N$, $|N|=\lambda$ and $\bbP\in N$. We say that a 
condition $p\in\bbP$ is {\em $(N,\bbP)$--generic in the standard sense\/}
(or just: {\em $(N,\bbP)$--generic\/}) if for every $\bbP$--name
$\name{\tau}\in N$ for an ordinal we have $p\forces$`` $\name{\tau}\in N$
''. 
\end{enumerate}
\end{definition}

{\em For the rest of this subsection we assume that:}
\begin{itemize}
\item $\bar{\bbQ}=\langle\bbP_\zeta, \name{\bbQ}_\zeta:\zeta<\zeta^*
  \rangle$ is a $\lambda$--support iteration of strategically
  $({<}\lambda)$--complete forcing notions, and 
\item for $\xi<\zeta^*$, $\name{\st}^0_\xi$ is the $<^*_\chi$--first 
$\bbP_\xi$--name for a regular winning strategy of Complete in
$\Game^\lambda_0(\name{\bbQ}_\xi)$, and 
\item a model $N\prec  ({\mathcal H}(\chi),{\in},{<^*_\chi})$ is such that
  $|N|=\lambda$, ${}^{<\lambda}N\subseteq N$ and $\bar{\bbQ},\ldots\in N$. 
\end{itemize}

\begin{observation}
$\bbP_{\zeta^*}$ is strategically $({<}\lambda)$--complete. 
\end{observation}

\begin{observation}
\label{tostart}
Suppose that $\zeta\in(\zeta^*+1)\cap N$, $\langle q_\alpha:\alpha<\delta
\rangle\subseteq N\cap \bbP_\zeta$, $\delta<\lambda$, $r\in\bbP_\zeta$ is 
$(N,\bbP_\zeta)$--generic and $q_\beta\leq r$ for all $\beta<\delta$. Then
there are conditions $q\in N\cap \bbP_\zeta$ and $r^+\in\bbP_\zeta$ such
that $q\leq r^+$, $r\leq r^+$ and $(\forall\beta<\delta)(q_\beta\leq q)$. 
\end{observation}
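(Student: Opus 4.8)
The plan is to exploit the $(N,\bbP_\zeta)$-genericity of $r$ to choose a common upper bound of the $q_\beta$'s that simultaneously lies in $N$ \emph{and} is compatible with $r$. The naive approach is not enough here: applying elementarity to the fact that $\cH(\chi)\models$ ``$\langle q_\alpha:\alpha<\delta\rangle$ has an upper bound'' (witnessed by $r$) does produce an upper bound inside $N$, but that bound need not be compatible with $r$, and so would not give the required $r^+$. Threading both requirements at once is the whole point, and this is exactly what genericity provides.

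Since $\delta<\lambda$, each $q_\alpha\in N$ and ${}^{<\lambda}N\subseteq N$, the sequence $\bar{q}=\langle q_\alpha:\alpha<\delta\rangle$ belongs to $N$. Working inside $N$, I form the set $W=\{q'\in\bbP_\zeta:(\forall\beta<\delta)(q_\beta\leq q')\}$ of all common upper bounds; as $W$ is definable from $\bar{q},\bbP_\zeta\in N$, we have $W\in N$. Because $\bbP_\zeta\in\cH(\chi)$ we have $|\bbP_\zeta|<\chi$, so $\theta:=\otp(W,<^*_\chi)<\chi$, and the $<^*_\chi$-increasing enumeration $e\colon\theta\to W$ both lie in $N$. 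Now comes the key reduction to a name for an \emph{ordinal}: let $\name{\gamma}\in N$ be the $\bbP_\zeta$-name for the least $\alpha<\theta$ with $e(\alpha)\in\name{G}_{\bbP_\zeta}$ (and $0$ if there is none), and set $\name{q}:=e(\name{\gamma})$. Since $r\geq q_\beta$ for all $\beta$ we have $r\in W$ and $r\forces r\in\name{G}_{\bbP_\zeta}$, so $r$ forces the witnessing set to be nonempty; hence $r\forces\name{q}\in W\cap\name{G}_{\bbP_\zeta}$, and in particular $r\forces(\forall\beta<\delta)(q_\beta\leq\name{q})$.

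Genericity now finishes the job. As $\name{\gamma}\in N$ is a name for an ordinal, $(N,\bbP_\zeta)$-genericity of $r$ gives $r\forces\name{\gamma}\in N$; and since $e\in N$ and $N\prec\cH(\chi)$ is closed under applying $e$ to its elements, we get $r\forces\name{q}=e(\name{\gamma})\in N$, i.e.\ $r\forces\name{q}\in N\cap\bbP_\zeta$. I then choose $r^+\geq r$ deciding $\name{q}$ to be a fixed value $q\in N\cap\bbP_\zeta$. For this $q$ we have $q_\beta\leq q$ for all $\beta<\delta$ (a statement about ground-model conditions, forced by $r^+$ hence true), and $q\in N\cap\bbP_\zeta$ as required. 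Finally, from $r^+\forces\name{q}=q\in\name{G}_{\bbP_\zeta}$ it follows that $q\leq r^+$ (by separativity, or, if one prefers, after replacing $r^+$ by a common extension of $q$ and $r^+$, which exists by density above $r^+$); together with $r\leq r^+$ this completes the proof. I expect the main obstacle to be exactly this passage from a name for a condition to the ordinal name $\name{\gamma}$ via $e$: it is the only place where the genericity hypothesis, stated only for names of ordinals, is invoked, and it is what keeps the chosen common upper bound inside $N$ while leaving it inside the generic filter, so that compatibility with $r$ is preserved.
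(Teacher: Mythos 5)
Your proof is correct; the paper states this as an Observation without proof, and your argument --- coding a common upper bound of $\langle q_\alpha:\alpha<\delta\rangle$ lying in $N$ by an ordinal name via a $<^*_\chi$--enumeration of the set $W$ of upper bounds, invoking $(N,\bbP_\zeta)$--genericity for that ordinal name, and finally passing to a common extension of $q$ and $r^+$ to secure $q\leq r^+$ --- is exactly the standard reduction the authors leave implicit. The one delicate point (that deciding $\name{q}$ only gives $r^+\forces q\in\name{G}_{\bbP_\zeta}$, not $q\leq r^+$ outright) is handled correctly by strengthening $r^+$ to a common upper bound with $q$.
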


\begin{lemma}
\label{655.3.3}
Suppose that $\zeta\in(\zeta^*+1)\cap N$ is a limit ordinal of cofinality
$\cf(\zeta)<\lambda$ and $r\in\bbP_\zeta$ is such that 
\[(\forall\vare\in\zeta\cap N)\big(r\rest\vare\mbox{ is
$(N,\bbP_\vare)$--generic}\big).\]
Assume that conditions $q_\alpha\in N\cap\bbP_\zeta$ (for $\alpha<\delta$,
$\delta<\lambda$) satisfy
\begin{enumerate}
\item[(a)] if $\alpha<\beta<\delta$, then $q_\alpha\leq q_\beta\leq r$, and 
\item[(b)] if $\vare\in N\cap \zeta$, $s\in \bbP_\vare$ and $(\forall
  \alpha<\delta)(q_\alpha\rest \vare\leq s)$, then
\[s\forces_{\bbP_\vare}\mbox{`` the sequence }\langle q_\alpha(\vare):
\alpha< \delta\rangle\mbox{ has an upper bound in $\name{\bbQ}_\vare$ ''.}\] 
\end{enumerate}
Then there are conditions $q\in N\cap\bbP_\zeta$ and $r^+\in\bbP_\zeta$ such
that $q\leq r^+$, $r\leq r^+$ and $(\forall\beta<\delta)(q_\beta\leq q)$. 
\end{lemma}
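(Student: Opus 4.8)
The plan is to fix a continuous increasing sequence $\langle\vare_i:i\leq\mu\rangle\in N$ cofinal in $\zeta$ with $\mu=\cf(\zeta)<\lambda$ and $\vare_\mu=\zeta$ (possible since $\zeta\in N$, $\cf(\zeta)<\lambda$ and ${}^{<\lambda}N\subseteq N$), and then to build the witnesses by a recursion of length $\mu$ along this sequence, taking unions at limits. Two preliminary remarks keep the bookkeeping clean. First, since ${}^{<\lambda}N\subseteq N$ forces $\lambda\subseteq N$, every set of size $\leq\lambda$ that is an element of $N$ is contained in $N$; as each $q_\alpha\in N$ has $|\dom(q_\alpha)|\leq\lambda$, the set $u=\bigcup_{\alpha<\delta}\dom(q_\alpha)$ is a subset of $N\cap\zeta$ of size $\leq\lambda$, so all nontrivial coordinates of the $q_\alpha$ lie in $N\cap\zeta$, precisely where hypothesis (b) is available. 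Second, $(N,\bbP_\vare)$--genericity is preserved under strengthening, so any condition extending $r\rest\vare$ is again $(N,\bbP_\vare)$--generic; and the union of a coherent increasing sequence of fewer than $\lambda$ conditions is again a condition, since its support stays of size $\leq\lambda$ and the inverse limit at a stage of cofinality $\leq\lambda$ contains all such functions.

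Constructing $q$ is the easy half and uses only (b). I define $q\in\bbP_\zeta$ by recursion on coordinates: $q(\xi)=\name{\emptyset}_{\name{\bbQ}_\xi}$ for $\xi\notin u$, and for $\xi\in u$ (so $\xi\in N\cap\zeta$) I let $q(\xi)$ be the $<^*_\chi$--least $\bbP_\xi$--name forced by $q\rest\xi$ to be an upper bound of $\langle q_\alpha(\xi):\alpha<\delta\rangle$. Such a name exists by (b) applied to $s=q\rest\xi$ (legitimate because the recursion maintains $q\rest\xi\geq q_\alpha\rest\xi$ for all $\alpha$), and it may be taken in $N$ by elementarity, as $q\rest\xi$ and $\langle q_\alpha(\xi):\alpha<\delta\rangle$ lie in $N$. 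At limit coordinates the union is a condition by the second remark, so the recursion never stalls. The whole definition is uniform in the parameters $\langle q_\alpha:\alpha<\delta\rangle$, $\zeta$, $\bar{\bbQ}$, $<^*_\chi$, all in $N$; hence $q\in N\cap\bbP_\zeta$ and $q_\beta\leq q$ for every $\beta<\delta$.

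It remains to produce $r^+\geq q,r$, and here the genericity below $\zeta$ and the smallness of $\cf(\zeta)$ are used. I build an increasing coherent sequence $\langle r^i:i\leq\mu\rangle$ with $r^i\in\bbP_{\vare_i}$, $r^j=r^i\rest\vare_j$ for $j<i$, and $r\rest\vare_i\leq r^i$, $q\rest\vare_i\leq r^i$; then $r^+=r^\mu$ works. The base and limit stages are handled by unions as above. The successor step $i\to i+1$ is the crux: given $r^i\geq r\rest\vare_i,\,q\rest\vare_i$ I must amalgamate, over the interval $[\vare_i,\vare_{i+1})$, the condition $r\rest\vare_{i+1}$ with the $N$--internal condition $q\rest\vare_{i+1}$, producing $r^{i+1}\geq r^i$ with $r^{i+1}\rest\vare_i=r^i$, $r^{i+1}\geq r\rest\vare_{i+1}$ and $r^{i+1}\geq q\rest\vare_{i+1}$. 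The decisive point is that $\vare_{i+1}\in N\cap\zeta$, so $r\rest\vare_{i+1}$, hence $r^i$ trivially extended, is genuinely $(N,\bbP_{\vare_{i+1}})$--generic, which we do not have at $\zeta$ itself. The intended mechanism is to pass above $r^i$ to the quotient $\bbP_{\vare_{i+1}}/\bbP_{\vare_i}$ over $\bV^{\bbP_{\vare_i}}$: since $r^i$ is $(N,\bbP_{\vare_i})$--generic the model $N[\name{G}_{\bbP_{\vare_i}}]$ adds no ordinals to $N$, $q\rest\vare_{i+1}$ descends to a condition of the quotient there, and the genericity of $r\rest\vare_{i+1}$ over $N[\name{G}_{\bbP_{\vare_i}}]$ lets me merge it with $q\rest\vare_{i+1}$ — via the relativization of Observation \ref{tostart} in the extension — into the desired $r^{i+1}$; working above $r^i$ automatically gives $r^{i+1}\rest\vare_i=r^i$.

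I expect this successor step to be the main obstacle. The naive coordinatewise attempt fails: $q(\xi)$ and $r(\xi)$ are both above every $q_\alpha(\xi)$ yet need not be compatible, so compatibility must be obtained globally on each interval from the $N$--genericity of $r\rest\vare_{i+1}$, not coordinate by coordinate. Everything else — the existence of $q$, the limit unions, and the final verification that $q\leq r^+$, $r\leq r^+$ and $q_\beta\leq q$ — is routine once the successor amalgamation is in hand, and the hypothesis $\cf(\zeta)<\lambda$ enters exactly to guarantee that the recursion runs for fewer than $\lambda$ steps, so that $r^+=\bigcup_{i<\mu}r^i$ is a condition with support of size $\leq\lambda$.
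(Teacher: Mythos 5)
Your first half goes wrong in a way that the second half cannot repair: you fix $q$ \emph{before} looking at $r$, and then hope to amalgamate. The condition $q$ you construct (coordinatewise, as the $<^*_\chi$--least name for an upper bound of $\langle q_\alpha(\xi):\alpha<\delta\rangle$) is just \emph{some} upper bound of the $q_\alpha$'s lying in $N$; there is no reason it should be compatible with $r$. Genericity of $r\rest\vare$ over $N$ says that every dense set (or maximal antichain) from $N$ is predense above $r\rest\vare$; it does \emph{not} say that an arbitrary individual condition of $N\cap\bbP_\vare$ is compatible with $r\rest\vare$. Already at $\xi=0$ the two conditions $q(0)$ and $r(0)$ are merely two upper bounds of the same increasing sequence in $\name{\bbQ}_0$, and two such upper bounds can be incompatible (think of $\lambda$--Cohen forcing: two distinct extensions of the union of the sequence). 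Your proposed successor-step ``mechanism'' does not close this gap: Observation \ref{tostart} (and its relativization to quotients) amalgamates $r$ with conditions of $N$ that are already \emph{below} $r$, producing some new $q'\in N$ compatible with $r$ --- it cannot force compatibility with the particular $q$ you committed to beforehand.

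The paper's proof reverses the order of quantifiers exactly to avoid this. Along a cofinal sequence $\langle i_\gamma:\gamma<\cf(\zeta)\rangle\subseteq N\cap\zeta$ it first builds, using the genericity of $r\rest i_\gamma$ and Observation \ref{tostart}, conditions $r^-_\gamma\in N\cap\bbP_{i_\gamma}$ above all $q_\beta\rest i_\gamma$ together with $r_\gamma\geq r\rest i_\gamma,\,r^-_\gamma$ (plus auxiliary $r^*_\gamma$ running a strategic-completeness play so that the $r_\gamma$'s have an upper bound $r^+$). Only \emph{then} is $q$ defined, on $[i_\gamma,i_{\gamma+1})$, by a case distinction whose first clause sets $q(\vare)=r^-_{\gamma+1}(\vare)$ whenever that is an upper bound of the $q_\beta(\vare)$'s --- which it is generically below $r^+$, giving $q\leq r^+$ --- while the second clause (any upper bound, available by hypothesis (b)) guarantees $q_\beta\leq q$. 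So the element of $N$ that ends up above the $q_\beta$'s is manufactured to sit below $r^+$ from the start, rather than chosen independently and merged afterwards. Your write-up of the ``easy half'' and the limit stages is fine, but the lemma's entire content is the successor amalgamation, and as stated your argument for it does not go through.
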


\begin{proof}
The arguments here are similar to \cite[Proposition 3.3]{RoSh:655}, except
that our iterands are {\em strategically\/} $({<}\lambda)$--complete only
(and not necessarily $({<}\lambda)$--complete). This introduces some
additional complexity.

Let $\langle i_\gamma:\gamma<\cf(\zeta)\rangle\subseteq N\cap \zeta$ be a
strictly increasing continuous sequence cofinal in $\zeta$ with $i_0=0$. By
induction on $\gamma<\cf(\zeta)$ we choose $r^-_\gamma,r_\gamma$ and
$r^*_\gamma$ such that
\begin{enumerate}
\item[(i)] $r^-_\gamma\in\bbP_{i_\gamma}\cap N$ is above (in
  $\bbP_{i_\gamma}$) of all $q_\beta\rest i_\gamma$ for $\beta<\delta$; 
\item[(ii)] $r_\gamma,r_\gamma^*\in\bbP_{i_\gamma}$, $r^-_\gamma\leq
  r_\gamma\leq r^*_\gamma$ and $r\rest i_\gamma\leq r_\gamma$;
\item[(iii)] if $\gamma<\vare<\cf(\zeta)$, then $r^-_\gamma\leq
  r^-_\vare$, $r^*_\gamma\leq r_\vare$, and 
\item[(iv)] for each $\vare<\zeta$ and $\gamma<\cf(\zeta)$ we have
\[\begin{array}{ll}
r^*_\gamma\rest \vare\forces&\mbox{`` }\langle r_\alpha(\vare),
r^*_\alpha(\vare): \alpha\leq\gamma\rangle\mbox{ is a legal partial play of
} \Game^\lambda_0(\name{\bbQ}_\vare)\\
&\mbox{ in which Complete uses her regular winning strategy }
\name{\st}^0_\vare\mbox{ ''.}
\end{array}\]
\end{enumerate}
So, at stage $\gamma+1$ of the construction we apply Observation
\ref{tostart} to the sequence $\langle r_\gamma^-\conc q_\beta\rest
[i_\gamma, i_{\gamma+1}): \beta<\delta\rangle\subseteq N\cap
\bbP_{i_{\gamma+1}}$ and the $(N,\bbP_{i_{\gamma+1}})$--generic condition
$r^*_\gamma\conc r\rest [i_\gamma,i_{\gamma+1})$. This will give us
conditions $r_{\gamma+1}^-\in N\cap \bbP_{i_{\gamma+1}}$ and
$r_{\gamma+1}\in \bbP_{i_{\gamma+1}}$ so that the demands (i)--(iii) are
satisfied. Then $r^*_{\gamma+1}\in \bbP_{i_{\gamma+1}}$ is chosen according
to demand (iv), remembering that $\name{\st}^0_\vare$ are
$\bbP_\vare$--names for {\em regular\/} winning strategies. Next, at a limit
stage $\gamma$ is limit then we first pick $r'\in\bbP_{i_\gamma}$ that is
stronger than all $r^*_\alpha$ for $\alpha<\gamma$ (exists by (iv)). This
$r'$ is $(N,\bbP_{i_\gamma})$--generic, stronger than all $r^-_\alpha$ for 
$\alpha<\gamma$ and stronger than all $q_\beta\rest i_\gamma$ for
$\beta<\delta$. So we may use Observation \ref{tostart} to find $r^-_\gamma$
and $r_\gamma\geq r'$ as needed for (i)--(iii) and then pick $r^*_\gamma$ to
fullfil (iv).
\medskip

Let $r^+\in\bbP_\zeta$ be an upper bound of $\langle r_\gamma:\gamma
<\cf(\zeta) \rangle$ (remember clause (iv) above); then also $r\leq
r^+$. Now we are going to define a condition $q\in \bbP_\zeta\cap N$. We let 
\[\dom(q)=\bigcup\{\dom(r^-_{\gamma+1})\cap [i_\gamma, i_{\gamma+1}):
\gamma< \cf(\zeta)\},\]
and for $\vare\in \dom(q)$, $i_\gamma\leq \vare< i_{\gamma+1}$, we let
$q(\vare)$ be the $<^*_\chi$--first $\bbP_\vare$--name for the following
object in $\bV[G_{\bbP_\vare}]$. 
\begin{enumerate}
\item[(A)] If $r^-_{\gamma+1}(\vare)[G_{\bbP_\vare}]$ is an upper bound to
  $\{q_\beta(\vare)[G_{\bbP_\vare}]: \beta<\delta\}$ in
  $\name{\bbQ}_\vare[G_{\bbP_\vare}]$, then $q(\vare)[G_{\bbP_\vare}]=
  r^-_{\gamma+1}(\vare)[G_{\bbP_\vare}]$. 
\item[(B)] If not (A) but  $\{q_\beta(\vare)[G_{\bbP_\vare}]:
  \beta<\delta\}$ has an upper bound in $\name{\bbQ}_\vare[G_{\bbP_\vare}]$,
  then  $q(\vare)[G_{\bbP_\vare}]$ is such a bound. 
\item[(C)] If neither (A) nor (B), then $q(\vare)[G_{\bbP_\vare}]=
  q_0(\vare) [G_{\bbP_\vare}]$. 
\end{enumerate}
It should be clear that $q\in \bbP_\zeta\cap N$. Now,
\begin{itemize}
\item $q\leq r^+$.
\end{itemize}
Why? By induction on $\vare\in \zeta\cap N$ we show that $q\rest \vare\leq
r^+\rest \vare$. Steps ``$\vare=0$'' and ``$\vare$ is limit'' are clear, so
suppose that we have proved $q\rest \vare\leq r^+\rest \vare$, $i_\gamma\leq
\vare <i_{\gamma+1}$ (and we are interested in restrictions to
$\vare+1$). Assume that $G_{\bbP_\vare}\subseteq \bbP_\vare$ is a generic
filter over $\bV$ such that $r^+\rest\vare \in G_{\bbP_\vare}$. Since
$q_\beta\rest i_{\gamma+1}\leq r^-_{\gamma+1}\leq r_{\gamma+1}\leq r^+$, we
also have $q_\beta\rest \vare\in G_{\bbP_\vare}$ (for $\beta<\delta$) and
$r^-_{\gamma+1} \rest \vare\in G_{\bbP_\vare}$. Hence
$r^-_{\gamma+1}(\vare)[G_{\bbP_\vare}]$ is an upper bound of
$\{q_\beta(\vare) [G_{\bbP_\vare}]:\beta<\delta\}$. Therefore 
\[q(\vare)[G_{\bbP_\vare}]=r^-_{\gamma+1}(\vare)[G_{\bbP_\vare}] \leq
r_{\gamma+1}(\vare)[G_{\bbP_\vare}] \leq r^+(\vare)[G_{\bbP_\vare}]\]
(see (A) above) and we are done.

The proof of the Lemma will be finished once we show that
\begin{itemize}
\item $(\forall \beta<\delta)(q_\beta\leq q)$.
\end{itemize}
Why does this hold? By induction on $\vare\in \zeta\cap N$ we show that
$q_\beta\rest \vare \leq q\rest \vare$ for all $\beta<\delta$. Steps
``$\vare=0$'' and ``$\vare$ is limit'' are as usual clear, so suppose that
we have proved $q_\beta\rest\vare \leq q\rest\vare$ for $\beta<\delta$,
$i_\gamma\leq \vare <i_{\gamma+1}$ (and we are interested in the
restrictions to $\vare+1$). Assume that $G_{\bbP_\vare}\subseteq \bbP_\vare$
is a generic filter over $\bV$ such that $q\rest \vare\in
G_{\bbP_\vare}$. Then, by the inductive hypothesis and the assumption (b) of 
the Lemma, we know that the sequence $\langle q_\beta(\vare)
[G_{\bbP_\vare}]: \beta<\delta \rangle$ has an upper bound in
$\name{\bbQ}_\vare[G_{\bbP_\vare}]$. Therefore, by (A)+(B),
$q(\vare)[G_{\bbP_\vare}] \geq q_\beta(\vare)[G_{\bbP_\vare}]$ for all
$\beta<\delta$, and we are done.  
\end{proof}

\begin{lemma}
\label{emplystrat}
Suppose that $\zeta\in (\zeta^*+1)\cap N$ and conditions $r\in
\bbP_\zeta$ and $p_\alpha,q_\alpha\in \bbP_\zeta\cap N$ (for
$\alpha<\delta$, $\delta<\lambda$) satisfy: 
\begin{enumerate}
\item[(i)] $p_\alpha\leq q_\alpha\leq p_\beta\leq q_\beta\leq r$ for all
  $\alpha<\beta<\delta$,  
\item[(ii)] for each $\xi\in \zeta\cap N$ and $\beta<\delta$ we have 
\[\begin{array}{ll}
q_\beta\rest \xi\forces_{\bbP_\xi}&\mbox{`` }\langle p_\alpha(\xi),
q_\alpha(\xi): \alpha\leq\beta\rangle\mbox{ is a legal partial play of }
\Game^\lambda_0(\name{\bbQ}_\xi)\\
&\mbox{ in which Complete uses her regular winning strategy
  $\name{\st}^0_\xi$ '',}
\end{array}\]
\item[(iii)] either 
\begin{enumerate}
\item[$(\alpha)$] $r$ is $(N,\bbP_\zeta)$--generic,
\end{enumerate}
or
\begin{enumerate}
\item[$(\beta)$] $\zeta$ is a limit ordinal of cofinality $\cf(\zeta)<
  \lambda$ and for each $\xi\in \zeta\cap N$ the condition $r\rest \xi$ is
  $(N,\bbP_\xi)$--generic. 
\end{enumerate}
\end{enumerate}
Let $\cI\in N$ be an open dense subset of $\bbP_\zeta$ and $p^-\in N\cap
\bbP_\zeta$, $p^-\leq r$. Then there are conditions $p',q'\in
\bbP_\zeta\cap N$ and $r'\in \bbP_\zeta$ such that 
\begin{enumerate}
\item[(a)] $r\leq r'$ and $q_\alpha\leq p'\leq q'\leq r'$ for all
  $\alpha<\delta$, and 
\item[(b)] for each $\xi\in \zeta\cap N$ we have 
\[\begin{array}{ll}
q'\rest \xi\forces_{\bbP_\xi}&\mbox{`` }\langle p_\alpha(\xi),
q_\alpha(\xi): \alpha<\delta\rangle\conc\langle p'(\xi),q'(\xi)\rangle
\mbox{ is a partial play of } \Game^\lambda_0(\name{\bbQ}_\xi)\\
&\mbox{ in which Complete uses her regular winning strategy
  $\name{\st}^0_\xi$ '',}
\end{array}\]
\item[(c)] if we are under the assumption of {\rm (iii)}$(\alpha)$, then
  also $p'\in \cI$,
\item[(d)] if we are under the assumption of {\rm (iii)}$(\beta)$ and 
  $q_\alpha\leq p^-$ for all $\alpha<\delta$, then we still may require that
  $p^- \leq p'$. 
\end{enumerate}
\end{lemma}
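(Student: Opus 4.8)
The plan is to build the three conditions $p',q',r'$ by a single recursion along $\zeta\cap N$, handling one coordinate $\xi$ at a time, exactly in the spirit of Lemma~\ref{655.3.3} but now also diagonalizing against the dense set $\cI$ (in case (iii)$(\alpha)$) or against $p^-$ (in case (iii)$(\beta)$). First I would reduce to choosing, for each $\xi\in\zeta\cap N$, a $\bbP_\xi$--name for the next pair of moves $p'(\xi),q'(\xi)$ of the game $\Game^\lambda_0(\name{\bbQ}_\xi)$ so that, below $q'\rest\xi$, the sequence $\langle p_\alpha(\xi),q_\alpha(\xi):\alpha<\delta\rangle\conc\langle p'(\xi),q'(\xi)\rangle$ is a legal partial play in which Complete follows $\name{\st}^0_\xi$. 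The key point at a single coordinate is that $\delta<\lambda$ and the play so far has length $\delta<\lambda$, so Complete's regular strategy supplies a legal response; the condition $r\rest\xi$ (which by hypothesis bounds all $q_\alpha\rest\xi$) witnesses that the partial play is legal and lets us pick $p'(\xi)$ above all the $q_\alpha(\xi)$ and then let $q'(\xi)$ be the strategy's answer.

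**Staying inside $N$ and gluing.**
Because ${}^{<\lambda}N\subseteq N$ and $|N|=\lambda$, the whole sequence $\langle p_\alpha,q_\alpha:\alpha<\delta\rangle$ lies in $N$, the names $\name{\st}^0_\xi$ lie in $N$, and $\cI,p^-\in N$; so at each step the $<^*_\chi$--least legal choice of $p'(\xi),q'(\xi)$ is definable from parameters in $N$ and hence lies in $N$. Thus the resulting $p',q'$ are members of $\bbP_\zeta\cap N$, giving the membership half of~(a). For $r'$ I would take the coordinatewise upper bound: set $r'\rest\xi$ to force that $q'(\xi)$ is below the next move dictated by $\name{\st}^0_\xi$, arranging $r\leq r'$ and $q'\leq r'$. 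Clause~(b) then holds by construction, and~(a)'s inequalities $q_\alpha\leq p'\leq q'\leq r'$ follow since each coordinate was chosen above the corresponding $q_\alpha(\xi)$.

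**The two branches.**
Under (iii)$(\alpha)$, where $r$ is $(N,\bbP_\zeta)$--generic, I would use genericity together with openness and density of $\cI$: since $\cI\in N$ is dense, a density argument inside $N$ (as in Observation~\ref{tostart}) lets me choose $p'$ to land in $\cI$ while still dominating all $q_\alpha$, yielding~(c). Under (iii)$(\beta)$, where $\zeta$ is a limit of cofinality $<\lambda$ and only the initial segments $r\rest\xi$ are generic, I would instead invoke Lemma~\ref{655.3.3} (or rerun its argument) to produce the coordinatewise bound over the cofinal sequence of coordinates; the extra requirement $p^-\leq p'$ in~(d) is arranged by starting the recursion from $p^-$ rather than from $\emptyset_{\bbP_\zeta}$, which is legitimate precisely because we assume $q_\alpha\leq p^-$ for all $\alpha$.

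**Main obstacle.**
The delicate part is the limit case (iii)$(\beta)$: there $r$ itself need not be $(N,\bbP_\zeta)$--generic, so I cannot simply appeal to a single genericity argument, and the coordinatewise upper bound must be assembled along a cofinal sequence of length $\cf(\zeta)<\lambda$ while simultaneously (i) keeping each finite-stage condition in $N$, (ii) maintaining legality of every game $\Game^\lambda_0(\name{\bbQ}_\xi)$ under the strategy, and (iii) guaranteeing that the bound $r'$ actually lies in $\bbP_\zeta$ rather than merely in the limit. This is exactly the difficulty that Lemma~\ref{655.3.3} was designed to surmount by threading the auxiliary conditions $r^*_\gamma$ through the strategic plays; I expect the proof here to reduce, after the single-coordinate analysis above, to citing that lemma with the sequence $\langle p_\alpha,q_\alpha:\alpha<\delta\rangle\conc\langle p'\rangle$ in place of $\langle q_\alpha:\alpha<\delta\rangle$, with hypothesis~(b) of that lemma supplied by clause~(ii) of the present hypotheses via the regularity of $\name{\st}^0_\xi$.
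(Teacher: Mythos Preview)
Your outline is adequate for case (iii)$(\alpha)$ --- the paper itself dismisses this case as ``clear'' --- but the plan for case (iii)$(\beta)$ has a real gap.

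You propose to first produce $p',q'$ by a coordinate-by-coordinate recursion and \emph{then} cite Lemma~\ref{655.3.3} to obtain $r'$. This separation does not work. Lemma~\ref{655.3.3} returns an upper bound $q\in N$ and a condition $r^+\geq r$ with $q\leq r^+$, but it says nothing about the strategic-play structure: the $q$ it hands you is not the coordinatewise $\name{\st}^0_\xi$--response to $p'$, and there is no mechanism in that lemma to force $q'\leq r^+$. Conversely, if you first fix $p',q'$ purely by names at each coordinate, you have no control tying $q'$ to any extension of $r$, so the inequality $q'\leq r'$ in clause~(a) is left unverified. Your ``main obstacle'' paragraph correctly senses the difficulty but then tries to dissolve it by citing the lemma with the augmented sequence $\langle p_\alpha,q_\alpha:\alpha<\delta\rangle\conc\langle p'\rangle$ --- which is circular, since $p'$ has not yet been built in a way compatible with $r$.

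The paper avoids this by \emph{not} separating the two tasks. It fixes a cofinal sequence $\langle i_\gamma:\gamma<\cf(\zeta)\rangle\subseteq N\cap\zeta$ and at each stage $\gamma$ simultaneously chooses four conditions $p^\gamma,q^\gamma\in N\cap\bbP_{i_\gamma}$ and $r^\gamma,s^\gamma\in\bbP_{i_\gamma}$: the pair $(p^\gamma,q^\gamma)$ extends the strategic play (and will eventually assemble into $p',q'$), while $(r^\gamma,s^\gamma)$ runs an auxiliary $\Game^\lambda_0$--play that guarantees upper bounds on the $r$-side. The crucial point is that at each stage one is working in $\bbP_{i_\gamma}$ where a genericity hypothesis \emph{does} hold (the condition $r^*$ built from previous $s^{\gamma'}$'s is $(N,\bbP_{i_\gamma})$--generic because it extends $r\rest i_\gamma$), so one can invoke the easy case (iii)$(\alpha)$ locally. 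Only after the whole $\cf(\zeta)$--length construction is finished are $p',q',r'$ defined from the $p^{\gamma+1}$'s, the strategy, and the $s^\gamma$'s respectively; the inequality $q'\leq r'$ is then verified by a separate induction on $\xi<\zeta$. Lemma~\ref{655.3.3} is invoked only in the residual case where $p^-$ does not already dominate the $q_\alpha$'s, to manufacture a suitable starting condition before the main construction begins.
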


\begin{proof}
If we are under the assumption of (iii)$(\alpha)$, then the conclusion of
the Lemma should be clear (including clause (c)).

So suppose that we are in the case described in (iii)$(\beta)$ and let
$\langle i_\gamma:\gamma<\cf(\zeta)\rangle \subseteq N\cap \zeta$ be an
increasing continuous sequence cofinal in $\zeta$, $i_0=0$. First, let us
assume that the condition $p^-\in N\cap \bbP_\zeta$ satisfies $q_\alpha\leq
p^-\leq r$ for all $\alpha<\delta$. By induction on $\gamma<\cf(\zeta)$ we
will pick conditions $p^\gamma,q^\gamma,r^\gamma,s^\gamma$ so that:
\begin{enumerate}
\item[$(*)_1$] $p^\gamma,q^\gamma\in N\cap \bbP_{i_\gamma}$, $r^\gamma,
  s^\gamma\in \bbP_{i_\gamma}$, 
\item[$(*)_2$] $p^-\rest i_\gamma\leq p^\gamma\leq q^\gamma$, $r\rest
  i_\gamma\leq r^\gamma\leq s^\gamma$ and $q^\gamma\leq r^\gamma$, 
\item[$(*)_3$] $q^{\gamma'}\leq p^\gamma\rest i_{\gamma'}$ and $s^{\gamma'}
  \leq r_\gamma\rest i_{\gamma'}$ for $\gamma' <\gamma$, 
\item[$(*)_4$] if $i_\gamma\leq\xi<i_{\gamma+1}$ and $\gamma<\gamma^* <
  \cf(\zeta)$, then  
\[\begin{array}{l}
q^{\gamma^*}\rest \xi\forces_{\bbP_\xi}\mbox{`` the sequence }\langle
p_\alpha(\xi),q_\alpha(\xi)\!: \alpha<\delta\rangle\conc\langle
p^{\gamma'}(\xi), q^{\gamma'}(\xi)\!:\gamma<\gamma'\leq\gamma^*\rangle\\ 
\qquad\qquad\mbox{ is a legal partial play of }
\Game^\lambda_0(\name{\bbQ}_\xi) \mbox{  in which }\\ 
\qquad\qquad\mbox{ Complete uses her regular winning strategy
  $\name{\st}^0_\xi$ '',}
\end{array}\]
and
\[\begin{array}{ll}
s^{\gamma^*}\rest \xi\forces_{\bbP_\xi}&\mbox{`` }\langle r^{\gamma'}(\xi), 
s^{\gamma'}(\xi):\gamma'\leq\gamma^*\rangle\mbox{ is a legal partial play 
  of } \Game^\lambda_0(\name{\bbQ}_\xi)\\  
&\mbox{ in which Complete uses her regular winning strategy
  $\name{\st}^0_\xi$ ''.}
\end{array}\]
\end{enumerate}
When we arrive to a stage $\gamma<\cf(\zeta)$ of the construction, after
having determined $p^{\gamma'},q^{\gamma'},r^{\gamma'},s^{\gamma'}$ for
$\gamma'<\gamma$, we first pick a condition $r^*\in\bbP_{i_\gamma}$ stronger
than all $s^{\gamma'}$ for $\gamma'<\gamma$ and such that $r\rest i_\gamma
\leq r^*$. [There is such $r^*$ by the second part of the demand $(*)_4$ at
previous stages.] Then the condition $r^*$ is $(N,\bbP_{i_\gamma})$--generic
so we may use arguments as for the first part of the Lemma (case
(iii)$(\alpha)$) to pick $p^\gamma,q^\gamma$ and $s^\gamma\geq r^\gamma\geq
r^*$ so that demands in $(*)_1$--$(*)_4$ are satisfied. 

After the construction is carried out we define $p'\in\bbP_\zeta$ so
that $\dom(p')=\bigcup\limits_{\gamma<\cf(\zeta)} \dom(q^\gamma)$ and if
$\xi\in \dom(p')$, $i_\gamma\leq\xi<i_{\gamma+1}$, $\gamma<\cf(\zeta)$, 
then $p'(\xi)$ is the $<^*_\chi$--first $\bbP_\xi$--name such that the
condition $p'\rest\xi$ forces (in $\bbP_\xi$) that 
\[\mbox{`` if }p^-(\xi)\leq p^{\gamma+1}(\xi)
\mbox{ then } p'(\xi)=p^{\gamma+1}(\xi),\mbox{ otherwise }p'(\xi)= p^-(\xi)
\mbox{ ''.}\] 
Then $p'\in N$ is a condition stronger than $p^-$ (and so also stronger than
all $q_\alpha$ for $\alpha<\delta$). Let $q'\in \bbP_\zeta\cap N$ be such
that $\dom(q')=\dom(p')$ and for each $\xi\in \dom(q')$ we have  
\[\begin{array}{ll}
q'\rest \xi\forces_{\bbP_\xi}&\mbox{`` }\langle p_\alpha(\xi),
q_\alpha(\xi): \alpha<\delta\rangle\conc\langle p'(\xi),q'(\xi)\rangle
\mbox{ is a partial play of } \Game^\lambda_0(\name{\bbQ}_\xi)\\
&\mbox{ in which Complete uses her regular winning strategy
  $\name{\st}^0_\xi$ ''.}
\end{array}\]
Plainly, $p'\leq q'$. Let $r'\in\bbP_\zeta$ be an upper bound to
$\langle s^\gamma:\gamma<\cf(\zeta)\rangle$ (exists by the second part of
the demands in $(*)_4$). Then $r'\geq r$. Now we show inductively that
$r'\rest \xi\geq q'\rest \xi$ for $\xi<\zeta$. So suppose $\xi\in\dom(q')$, 
$i_\gamma\leq \xi< i_{\gamma+1}$, and $q'\rest\xi \leq r'\rest \xi$. Then
$r'\rest\xi\forces p'(\xi)=p^{\gamma+1}(\xi)$ (by $(*)_2$) and so also
$r'\rest\xi \forces q'(\xi)=q^{\gamma+1}(\xi)\leq s^{\gamma+1}(\xi)\leq
r'(\xi)$. Hence $p^-\leq p'\leq q'\leq r'$. 

If the condition $p^-$ does not satisfy the assumptions of clause (d), then
we may use Lemma \ref{655.3.3} to find $q\in N\cap \bbP_\zeta$ and
$r^+\in\bbP_\zeta$ such that $r\leq r^+$, $q\leq r^+$ and $q_\alpha\leq q$
for all $\alpha<\delta$. Now carry out the above arguments for $q, r^+$ in
place of $p^-, r$.
\end{proof}

\begin{definition}
\label{RSconditions}
\begin{enumerate}
\item An {\em RS--condition\footnote{the RS stands for ``revised support''} 
    in $\bbP_{\zeta^*}$} is a pair $(p,w)$ such that $w\in
  [(\zeta^*+1)]^{<\lambda}$ is a closed set, $0,\zeta^*\in w$, $p$ is a
  function with domain $\dom(p)\subseteq \zeta^*$, and 
\begin{enumerate}
\item[$(\otimes)$] for every two successive members $\vare'<
\vare''$ of the set $w$, $p\rest [\vare',\vare'')$
is a $\bbP_{\vare'}$--name of an element of $\bbP_{\vare''}$
whose support is included in the interval $[\vare',\vare'')$.
\end{enumerate}
The family of all RS--conditions in $\bbP_{\zeta^*}$ is denoted by
$\bbP_{\zeta^*}^{\rm RS}$.
\item If $(p,w)\in\bbP_{\zeta^*}^{\rm RS}$ and $G\subseteq
\bbP_{\zeta^*}$ is a generic filter over $\bV$, then we define 
\[(p,w)^G=\bigcup\big\{(p\rest [\vare',\vare''))[G\cap \bbP_{\vare'}]:
\vare'< \vare''\mbox{ are successive members of }w\big\}\] 
Note that $(p,w)^G\in \bbP_{\zeta^*}$. Also, we write $(p,w)\in' G$ whenever
$(p,w)^G\in G$.  
\item If $(p_1,w_1), (p_2,w_2)\in \bbP_{\zeta^*}^{\rm RS}$, then we write
$(p_1,w_1)\leq'(p_2,w_2)$ whenever 
\begin{enumerate}
\item[$(\oplus)$] for every generic $G\subseteq\bbP_{\zeta^*}$ over
  $\bV$,

if $(p_2,w_2)\in' G$ then $(p_1,w_1)^G \leq_{\bbP_{\zeta^*}} (p_2, w_2)^G$. 
\end{enumerate}
\end{enumerate}
\end{definition}

\begin{remark}
\label{RSremark}
In \ref{RSconditions}(1)$(\otimes)$, $p\rest [\vare',\vare'')$ is a sequence
of $\bbP_{\vare'}$--names, so it is a $\bbP_{\vare'}$--name for some
sequence. This (resulting) sequence is required to belong to
$\bbP_{\vare''}$, and just also to $\bV$. We may therefore think about an
RS--condition $(p,w)$ as follows. First, for every $\vare\in w\cap \zeta^*$
we have  
\begin{itemize}
\item a maximal antichain $\{p_i^\vare: i\in I_\vare\}$ of $\bbP_\vare$, and 
\item a function $f^\vare:I_\vare\longrightarrow \bbP_{\zeta^*}$ such that
  $f^\vare(i)$ is a condition with support included in the interval $\big[
  \vare,\min(w\setminus (\vare+1))\big)$. 
\end{itemize}
(They describe a $\bbP_\vare$--name for a condition in $\bbP_{\zeta^*}$ with
support in the right interval.) Next, if $\vare\leq\xi<\vare'$
where $\vare,\vare'$ are successive members of $w$, then we may fix a
$\bbP_\xi$--name $p(\xi)$ for a member of $\name{\bbQ}_\xi$ such that
$p^\vare_i\forces_{\bbP_\xi} p(\xi)=f^\vare(i)(\xi)$. Note however that if
we apply this approach to each $\xi$, we may not end up with a condition in
$\bbP_{\zeta^*}$ because of the support! Nevertheless we will think of $p$
as a function on $\zeta^*$ where each $p(\xi)$ is a $\bbP_\xi$--name for a
member of $\name{\bbQ}_\xi$.  
\end{remark}

\begin{lemma}
\label{RSbounds}
\begin{enumerate}
\item If $(p,w)\in\bbP_{\zeta^*}^{\rm RS}$ and $q\in\bbP_{\zeta^*}$, then
there is $q^*\in\bbP_{\zeta^*}$ stronger than $q$ and such that for each
successive members $\vare'<\vare''$ of $w$ the condition $q^*\rest \vare'$
decides $p\rest [\vare',\vare'')$ (i.e., $q^*\rest\vare'\forces$`` $p\rest
[\vare', \vare'')=p_{\vare',\vare''}$'' for some $p_{\vare', \vare''}\in
\bbP_{\zeta^*}$).  
\item For each $(p,w)\in\bbP_{\zeta^*}^{\rm RS}$ there is $q\in 
\bbP_{\zeta^*}$ such that $(p,w)\leq'(q,\{0, \zeta^*\})$. 
\item Let $(p_i,w_i)\in \bbP_{\zeta^*}^{\rm RS}\cap N$ (for $i<\delta<
\lambda$), and $q\in\bbP_{\zeta^*}\cap N$, $r\in\bbP_{\zeta^*}$ be such that 
\[q\leq r\quad\mbox{ and }\quad(\forall i<j<\delta)((p_i,w_i)\leq' (p_j,w_j)
\leq' (r,\{0,\zeta^*\})).\]
Assume that either $r$ is $(N,\bbP_{\zeta^*})$--generic, or $\zeta^*$ is a
limit ordinal of cofinality $\cf(\zeta^*)<\lambda$ and for every $\zeta\in 
\zeta^*\cap N$ the condition $r\rest\zeta$ is $(N,\bbP_\zeta)$--generic.

\noindent Then there are conditions $q'\in N\cap\bbP_{\zeta^*}$
and $r'\in\bbP_{\zeta^*}$ such that $q\leq q'\leq r'$, $r\leq r'$ and
$(\forall i<\delta)((p_i,w_i)\leq' (q',\{0,\zeta^*\}))$.  
\end{enumerate}
\end{lemma}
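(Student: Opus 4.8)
The plan is to treat parts (1) and (2) as preparatory density/absorption facts and to reduce part (3), the genuine fusion statement, to the machinery already developed in Lemma~\ref{emplystrat} and Observation~\ref{tostart}.

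For part (1) I would proceed by recursion on $\otp(w)<\lambda$, enumerating $w$ increasingly as $\langle\vare_\iota:\iota\le\iota^*\rangle$ and building an increasing sequence of conditions whose restriction to $\vare_\iota$ eventually decides $p\rest[\vare_\iota,\vare_{\iota+1})$. At a successor step I strengthen the current condition inside $\bbP_{\vare_\iota}$ to decide the $\bbP_{\vare_\iota}$--name $p\rest[\vare_\iota,\vare_{\iota+1})$ (a plain density argument), copying the values of $q$ on the new interval so as to stay above $q$; since strengthening preserves forced statements, decisions made at earlier coordinates survive. The only delicate point is the limit step, where I need an upper bound of a ${<}\lambda$--sequence in the iteration; I secure it by arranging, coordinate by coordinate, that the successive values form a legal play of $\Game^\lambda_0(\name{\bbQ}_\xi)$ in which Complete follows her regular winning strategy $\name{\st}^0_\xi$, so that a bound is guaranteed exactly as in the proofs of Lemmas~\ref{655.3.3} and \ref{emplystrat}. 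Part (2) then follows: apply (1) to $(p,w)$ (with a trivial base condition) to obtain $q^*$ deciding every $p\rest[\vare',\vare'')$, say to $p_{\vare',\vare''}$, and build $q\ge q^*$ whose values on each interval $[\vare',\vare'')$ dominate $p_{\vare',\vare''}$ --- again taking strategic bounds at limits. By the computation of $(p,w)^G$ in Definition~\ref{RSconditions} this gives $(p,w)\le'(q,\{0,\zeta^*\})$.

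For part (3) I would mirror the argument of Lemma~\ref{emplystrat}, reading each RS--condition coordinatewise as in Remark~\ref{RSremark}: for every $\xi$ the object $p_i(\xi)$ is a $\bbP_\xi$--name for a member of $\name{\bbQ}_\xi$, and the relation $(p_i,w_i)\le'(p_j,w_j)$ forces $p_i(\xi)\le p_j(\xi)$, so along each coordinate the values $\langle p_i(\xi):i<\delta\rangle$ are (forced to be) $\le$--increasing. In the first case, when $r$ is $(N,\bbP_{\zeta^*})$--generic, I use part (2) and the genericity of $r$ to realise each $(p_i,w_i)$ by a genuine condition $p^*_i\in N\cap\bbP_{\zeta^*}$ with $(p_i,w_i)\le'(p^*_i,\{0,\zeta^*\})$ and $p^*_i\le r$ (the decided name values are pulled into $N$ by the genericity of the relevant initial segments of $r$), and then apply Observation~\ref{tostart} to $\langle p^*_i:i<\delta\rangle$ and $r$; the resulting $q'\in N$ satisfies $p^*_i\le q'$, whence $(p_i,w_i)\le'(q',\{0,\zeta^*\})$ by transitivity of $\le'$ against the genuine order. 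In the second case I fix an increasing continuous cofinal $\langle i_\gamma:\gamma<\cf(\zeta^*)\rangle\subseteq N\cap\zeta^*$ and run the interval recursion of Lemma~\ref{emplystrat}: on each interval $[i_\gamma,i_{\gamma+1})$ I play the (decided) values of the $p_i$ as Incomplete's moves and let Complete answer by $\name{\st}^0_\xi$, producing $q'\rest i_{\gamma+1}\in N$, while simultaneously choosing auxiliary conditions $r^\gamma,s^\gamma$ with $r\rest i_\gamma\le r^\gamma$ and $q'\rest i_\gamma\le r^\gamma\le s^\gamma$ that merge the $N$--internal bound with $r$; an upper bound of $\langle s^\gamma:\gamma<\cf(\zeta^*)\rangle$ is the desired $r'$.

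The main obstacle is precisely this merging step, i.e.\ passing from the bound $q'\in N$ to a common extension $r'\ge q',r$. The genericity hypothesis on $r$ (resp.\ on each $r\rest i_\gamma$) is exactly what makes it work: it guarantees that Complete's strategic response $q'(\xi)$, computed in $N$ above the play $\langle p_i(\xi):i<\delta\rangle$, is compatible with the externally given $r(\xi)$, so that the common bounds $r^\gamma$ (condition $(*)_2$ in the proof of Lemma~\ref{emplystrat}) can be found stage by stage. The two subsidiary difficulties are keeping the whole recursion inside $N$ --- which holds because the data $\langle(p_i,w_i):i<\delta\rangle$, the sets $w_i$, and the names $\name{\st}^0_\xi$ all belong to $N$ and $\delta<\lambda$, so ${}^{<\lambda}N\subseteq N$ applies --- and checking that $q'$ is a genuine condition, for which one notes $|\dom(q')|\le\sum_{i<\delta}|w_i|\cdot\lambda=\lambda$, so the support blow-up inherent in RS--conditions does no harm to the final bound.
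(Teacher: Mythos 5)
Your treatment of parts (1) and (2), and of the second case of part (3) (where $\cf(\zeta^*)<\lambda$ and only the initial segments of $r$ are generic), follows the paper's route: the paper dismisses (1) and (2) as straightforward consequences of strategic $({<}\lambda)$--completeness, and for the hard case of (3) it runs exactly the interval recursion along $\langle i_\gamma:\gamma<\cf(\zeta^*)\rangle$ that you describe, decorating it with the regular strategies $\name{\st}^0_\xi$, invoking Lemma \ref{emplystrat} at each stage and Lemma \ref{655.3.3} at the end.

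However, your argument for the first case of (3), where $r$ is $(N,\bbP_{\zeta^*})$--generic, has a genuine gap. You propose to find, for each $i<\delta$, a single condition $p^*_i\in N\cap\bbP_{\zeta^*}$ with both $p^*_i\leq r$ and $(p_i,w_i)\leq'(p^*_i,\{0,\zeta^*\})$, and then to apply Observation \ref{tostart} to $\langle p^*_i:i<\delta\rangle$ and $r$. The second property requires $(p_i,w_i)^G\leq p^*_i$ for \emph{every} generic $G$ containing $p^*_i$, not only for those containing $r$; in effect $p^*_i$ must decide the names $p_i\rest[\vare',\vare'')$ and dominate the decided values. But $(N,\bbP_{\zeta^*})$--genericity of $r$ only tells you that the dense set (coded in $N$) of conditions deciding and dominating these names is \emph{predense above} $r$: it produces members of $N$ \emph{compatible} with $r$, not members of $N$ \emph{weaker than} $r$. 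In general no weakening of $r$ lying in $N$ decides $p_i\rest[\vare',\vare'')$ (the conditions below a fixed $r$ form a small, essentially arbitrary set), so the required $p^*_i$ need not exist, and Observation \ref{tostart}, whose hypothesis is $q_\beta\leq r$, cannot be applied as stated. The repair is the same interleaved fusion you already carry out in the other case: build increasing $q'_i\in N\cap\bbP_{\zeta^*}$, each above $q$ and deciding and dominating the names of $(p_j,w_j)$ for $j\leq i$, together with increasing $r_i\geq r$ satisfying $q'_i\leq r_i$, using predensity at successor steps and the strategies $\name{\st}^0_\xi$ to secure bounds at limits; then $(p_i,w_i)\leq'(q'_i,\{0,\zeta^*\})\leq'(q',\{0,\zeta^*\})$ for a final bound $q'\in N$ with a common extension $r'\geq q',r$. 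With that correction the proposal is sound and coincides with the paper's argument.
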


\begin{proof} 
It is a slight generalization of \cite[Proposition 3.6]{RoSh:655}. 
\medskip

\noindent (1), (2)\qquad Straightforward (as $\bbP_{\zeta^*}$ is
strategically $({<}\lambda)$--complete).   
\medskip

\noindent (3)\quad If $r$ is $(N,\bbP_{\zeta^*})$--generic, then the
conclusion is immediate. So let us consider the case when $\aleph_0 \leq
\cf(\zeta^*)<\lambda$ and $r\rest \zeta$ is $(N,\bbP_\zeta)$--generic for
each $\zeta\in \zeta^*\cap N$. 

Let $\langle i_\gamma:\gamma<\cf(\zeta^*)\rangle\subseteq N\cap\zeta^*$ be a 
strictly increasing continuous sequence cofinal in $\zeta^*$, $i_0=0$. For
$\gamma<\cf(\zeta^*)$ and  $i<\delta$ we put $p_i^\gamma=p_i\rest
i_\gamma$ and $w^\gamma_i=(w_i\cap i_\gamma)\cup\{i_\gamma\}$ (clearly
$(p^\gamma_i,w^\gamma_i)\in \bbP_{i_\gamma}^{\rm RS}$). 

Now, we inductively pick conditions $q_\gamma,q_\gamma^+,r_\gamma,
r^+_\gamma$ (for $\gamma<\cf(\zeta^*)$) so that  
\begin{enumerate}
\item[(i)] $q_\gamma,q^+_\gamma\in\bbP_{i_\gamma}\cap N$,
  $r_\gamma,r^+_\gamma\in\bbP_{i_\gamma}$, 
\item[(ii)] $q\rest i_\gamma\leq q_\gamma\leq q^+_\gamma\leq r_\gamma$,
  $r\rest i_\gamma\leq r_\gamma\leq r_\gamma^+$, and for each $\beta<\gamma$ we have  
\[q_\beta^+\leq q_\gamma\mbox{ and }r_\beta^+\leq r_\gamma,\]
\item[(iii)] for every $\xi<i_\gamma$ we have 
\[\begin{array}{ll}
q^+_\gamma\rest \xi\forces_{\bbP_\xi}&\mbox{`` }\langle q_\beta(\xi),
q_\beta^+(\xi): \beta\leq\gamma\rangle\mbox{ is a legal partial play of } 
\Game^\lambda_0(\name{\bbQ}_\xi)\\ 
&\mbox{ in which Complete uses her regular winning strategy
  $\name{\st}^0_\xi$ '',}  
\end{array}\]
 and
\[\begin{array}{ll}
r^+_\gamma\rest \xi\forces_{\bbP_\xi}&\mbox{`` }\langle r_\beta(\xi),
r^+_\beta(\xi): \beta\leq\gamma\rangle\mbox{ is a legal partial play of }
\Game^\lambda_0(\name{\bbQ}_\xi)\\ 
&\mbox{ in which Complete uses the strategy $\name{\st}^0_\xi$ '',}
\end{array}\]
\item[(iv)] if $i<\delta$ and $\vare'<\vare''$ are two successive members of
  $w_i$ with $\vare'<i_\gamma$, then the condition $q_\gamma\rest\vare'$
  decides $p_i\rest [\vare', \vare'')$ and $(p^\gamma_i,w^\gamma_i)\leq'
  (q_\gamma, \{0,i_\gamma\})$.
\end{enumerate}
At a limit stage $\gamma$ of the construction, we first pick an upper bound
$r^*\in \bbP_{i_\gamma}$ to $\langle r_\beta^+:\beta<\gamma\rangle$
(exists by clause (iii) at previous stages). By (ii) we know that $r\rest
i_\gamma\leq r^*$ so $r^*$ is $(N,\bbP_{i_\gamma})$--generic. Consequently,
we may use Lemma \ref{emplystrat} to choose $q_\gamma,q_\gamma^+\in
\bbP_{i_\gamma}\cap N$ and $r_\gamma\in\bbP_{i_\gamma}$ satisfying the
relevant demands in (ii)+(iii), and then we pick $r^+_\gamma\geq r_\gamma$
so that the second part of (iii) holds. Note that the demand in (iv) will
then follow by the inductive hypothesis.

Suppose now we are at a successor stage $\gamma+1$, so we assume
$q_\gamma,q^+_\gamma,r_\gamma,r^+_\gamma$ have been already chosen. We note
that a condition $r^*=r^+_\gamma\conc (r\rest [i_\gamma,i_{\gamma+1}))
\in\bbP_{i_{\gamma+1}}$ is stronger than $r$ so it is $(N,
\bbP_{i_{\gamma+1}})$--generic. Therefore we may apply Lemma
\ref{emplystrat} to $r^*$, the sequence $\langle q_\beta,q_\beta^+: \beta\leq 
\gamma\rangle$ considered as a sequence of conditions in
$\bbP_{i_{\gamma+1}}$ and a suitable open dense subset $\cI$ of
$\bbP_{i_{\gamma+1}}$ from $N$ to choose $q_{\gamma+1},q_{\gamma+1}^+$ and
$r_{\gamma+1}, r^+_{\gamma+1}$ so that the demands in (i)---(iv) are
satisfied. (For clause (iv) remember that by our assumptions we have
$(p^{\gamma+1}_i,w^{\gamma+1}_i)\leq' (r^*, \{0,i_{\gamma+1}\})$.)
 
Let $r^*\in \bbP_{\zeta^*}$ be stronger than all $r_\gamma$'s. Apply Lemma
\ref{655.3.3} to the sequence $\langle
q^+_\gamma:\gamma<\cf(\zeta^*)\rangle$ and the condition $r^*$ to choose
$q'\in N\cap \bbP_{\zeta^*}$ and $r'\in \bbP_{\zeta^*}$ such that 
\[q'\leq r',\quad r^*\leq r',\quad \mbox{ and }\quad (\forall\gamma<
\cf(\zeta^*))(q^+_\gamma\leq q').\] 
Then $r',q'$ are as required. 
\end{proof}

\begin{proposition}
\label{X.7}  
Assume that
\begin{enumerate}
\item $(p,w)\in \bbP^{\rm RS}_{\zeta^*}$, $\zeta\in w$ and $\sup(w\cap
    \zeta)<\xi<\zeta$, 
\item $w'\subseteq \zeta^*+1$ is a closed set such that $|w'|<\lambda$ and
  $w\cup\{\xi\}\subseteq w'$,
\item $\name{\tau}$ is a $\bbP_\zeta$--name for an ordinal.
\end{enumerate}
Then there are $p'$ and $\name{\tau}'$ such that 
\begin{enumerate}
\item[(a)] $(p',w')\in \bbP^{\rm RS}_{\zeta^*}$ and $(p,w)\leq' (p',w')$,
\item[(b)] $p'\rest\xi=p\rest\xi$ and $p'\rest [\zeta,\zeta^*)= p\rest
  [\zeta, \zeta^*)$,
\item[(c)] $\name{\tau}'$ is a $\bbP_\xi$--name, and 
\item[(d)] if $G\subseteq \bbP_{\zeta^*}$ is generic over $\bV$ and
  $(p',w')\in' G$, then $\name{\tau}'[G]=\name{\tau}[G]$. 
\end{enumerate}
\end{proposition}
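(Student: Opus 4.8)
The plan is to leave $p$ untouched everywhere except on the interval between $\xi$ and $\zeta$, and to push the name $\name{\tau}$ down to stage $\xi$ by deciding it inside the relevant quotient forcing. First I would fix $\vare_0=\max(w\cap\zeta)$, which exists because the hypothesis $\sup(w\cap\zeta)<\xi<\zeta$ forces $\zeta$ to be a successor point of $w$; thus $\vare_0,\zeta$ are successive members of $w$ and $p\rest[\vare_0,\zeta)$ is a $\bbP_{\vare_0}$--name $\name{\bar{d}}$ for a condition of $\bbP_\zeta$ with support contained in $[\vare_0,\zeta)$.

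The core step is the definition of the new piece on $[\xi,\zeta)$. Working in $\bV^{\bbP_\xi}$, I would use that $\bbP_\zeta\cong\bbP_\xi*(\bbP_\zeta/\bbP_\xi)$, that $\name{\bar{d}}$ (being a $\bbP_{\vare_0}$--name, hence a $\bbP_\xi$--name) evaluates to a condition whose tail $\name{\bar{d}}\rest[\xi,\zeta)$ is a condition of the quotient $\bbP_\zeta/\name{G}_{\bbP_\xi}$ whenever $\name{\bar{d}}\rest\xi\in\name{G}_{\bbP_\xi}$, and that $\name{\tau}$ becomes a name for an ordinal of this quotient. Since conditions deciding $\name{\tau}$ are dense in the quotient, I can fix a $\bbP_\xi$--name $\name{c}$ for a condition of $\bbP_\zeta$ with support $\subseteq[\xi,\zeta)$ such that $\forces_{\bbP_\xi}$ `` if $\name{\bar{d}}\rest\xi\in\name{G}_{\bbP_\xi}$ then $\name{c}\geq\name{\bar{d}}\rest[\xi,\zeta)$ and $\name{c}$ decides $\name{\tau}$ over the quotient '', together with a $\bbP_\xi$--name $\name{\tau}'$ for the ordinal so decided. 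Then I define $p'$ by $p'\rest\xi=p\rest\xi$, $p'\rest[\zeta,\zeta^*)=p\rest[\zeta,\zeta^*)$, and $p'\rest[\xi,\zeta)=\name{c}$, finally subdividing these pieces along the remaining points of $w'$ (each such subdivision merely restricts an already-named piece to a subinterval and hence stays within $\bbP^{\rm RS}_{\zeta^*}$). This yields clauses (b) and (c) at once.

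For clause (a) I would check that $(p',w')\in\bbP^{\rm RS}_{\zeta^*}$ (each piece is a name for a condition in the correct forcing with support in the correct interval, using $\bbP_{\vare_0}\lesdot\bbP_\xi\lesdot\bbP_\eta$ so that the relevant restrictions remain names over the required coordinate) and that $(p,w)\leq'(p',w')$: for any generic $G$ with $(p',w')\in' G$ the conditions $(p,w)^G$ and $(p',w')^G$ agree off $[\xi,\zeta)$, while on $[\xi,\zeta)$ the latter evaluates to $\name{c}[G\cap\bbP_\xi]\geq\name{\bar{d}}[G\cap\bbP_\xi]\rest[\xi,\zeta)$, so $(p,w)^G\leq(p',w')^G$. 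Clause (d) then follows: if $(p',w')\in' G$ then $(p',w')^G\in G$, so its tail $\name{c}[G\cap\bbP_\xi]$ lies in the quotient-generic determined by $G$; moreover $\name{\bar{d}}\rest\xi\in G\cap\bbP_\xi$ (it is part of $(p',w')^G\rest\xi\in G$), so we are in the ``active'' case, $\name{c}$ decides $\name{\tau}$ with value $\name{\tau}'[G\cap\bbP_\xi]$, and hence $\name{\tau}[G]=\name{\tau}'[G\cap\bbP_\xi]$, as required.

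The step I expect to be the main obstacle is the bookkeeping in the quotient: arranging that $\name{c}$ (and the extracted $\name{\tau}'$) is genuinely a $\bbP_\xi$--name with support confined to $[\xi,\zeta)$ and dominating the tail of $\name{\bar{d}}$, and then verifying the precise correspondence between a generic $G$ with $(p',w')\in' G$ and the quotient-generic over $\bV[G\cap\bbP_\xi]$ through which the decision of $\name{\tau}$ propagates. Everything else is routine manipulation of RS--pieces and supports, and can alternatively be phrased through standard representations via Observations \ref{X.2} and \ref{X.5}.
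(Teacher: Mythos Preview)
Your proposal is correct and follows essentially the same line as the paper's proof. The paper phrases the construction in the ground model via a maximal antichain $\cB\subseteq\bbP_\xi$ and a function $g:\cB\to\bbP_\zeta$ with $\dom(g(r))\subseteq[\xi,\zeta)$, $r\forces p\rest[\xi,\zeta)\leq g(r)$, and $r\conc g(r)$ deciding $\name{\tau}$; your quotient-forcing formulation (take $\name{c}$ extending the tail and deciding $\name{\tau}$ in $\bbP_\zeta/\name{G}_{\bbP_\xi}$) is just the same construction read in $\bV^{\bbP_\xi}$.
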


\begin{proof}
  First we declare that $p'\rest\xi=p\rest\xi$ and $p'\rest [\zeta,\zeta^*)
  = p\rest [\zeta,\zeta^*)$. Now, $p\rest [\xi,\zeta)$ is a
  $\bbP_\xi$--name, so we may choose a maximal antichain
  $\cB\subseteq\bbP_\xi$ and a function $g:\cB\longrightarrow \bbP_\zeta$
  such that for each $r\in\cB$ we have
  \begin{enumerate}
\item[$(\alpha)$]  $\dom(g(r))\subseteq [\xi,\zeta)$,
\item[$(\beta)$] $r$ decides the value of $p\rest [\xi,\zeta)$ and
  $r\forces_{\bbP_\xi} p\rest [\xi,\zeta)\leq g(r)$, 
\item[$(\gamma)$] $r\conc g(r)$ forces a value to $\name{\tau}$, say $r\conc
  g(r)\forces \name{\tau}=\tau(r)$.
  \end{enumerate}
Let $\name{\tau}'$ be a $\bbP_\xi$--name such that $r\forces_{\bbP_\xi}
\name{\tau}'=\tau(r)$ for each $r\in\cB$, and let $p'\rest [\xi,\zeta)$ be a
$\bbP_\xi$--name such that $r\forces p'\rest [\xi,\zeta)=g(r)$ for $r\in
\cB$. Then $(p',w')$ and $\name{\tau}'$ are as required in (a)--(d). 
\end{proof}

It will be convenient for the proof of our main result \ref{n3.7} to look at
RS--conditions also in a slightly different way.

\begin{definition}
\label{X.1}
Let $(p,w)\in \bbP^{\rm RS}_{\zeta^*}$. {\em A standard representation of
  $(p,w)$} is a triple $(\cA,f,w)$ such that 
\begin{enumerate}
\item[(i)] $\cA\subseteq \bbP_{\zeta^*}$ is a maximal antichain,
\item[(ii)] $f:\cA\longrightarrow\bbP_{\zeta^*}$,
\item[(iii)] for each $r\in\cA$, either $f(r)\leq r$ or for some $\vare \in
  \dom(r)$ we have $f(r)\rest \vare\leq r\rest \vare$ and
  $r\rest\vare\forces_{\bbP_\vare}$`` $f(r)(\vare),r(\vare)$ are
  incompatible in $\name{\bbQ}_\vare$ '',
\item[(iv)] if $r\in\cA$, $\vare\in w\cap\zeta^*$ and
  $\vare'=\min(w\setminus (\vare+1))$, 

then $r\rest\vare\forces_{\bbP_\vare}$`` $p\rest\vare'=f(r)\rest \vare'$ ''.  
\end{enumerate}
\end{definition}

\begin{observation}
\label{X.2}
For each $(p,w)\in \bbP^{\rm RS}_{\zeta^*}$ and an open dense set
$\cI\subseteq \bbP_{\zeta^*}$ there is a standard representation $(\cA,f,w)$
of $(p,w)$ such that $\cA\subseteq \cI$.   
\end{observation}

\begin{observation}
\label{X.2A}
Assume that $(p,w)\in \bbP^{\rm RS}_{\zeta^*}$ and $(\cA,f,w)$ is a standard
representation of $(p,w)$.
\begin{enumerate}
\item[(v)] If $s,r\in\cA$, $\vare\in w\cap\zeta^*$, $\vare'=\min(w\setminus
  (\vare+1))$ and the conditions $s\rest \vare$ and $r\rest\vare$ are
  compatible in $\bbP_\vare$, then $f(s)\rest \vare'= f(r)\rest \vare'$. 
\item[(vi)] If $r\in\cA$, $\vare<\zeta^*$ and $f(r)\rest \vare\leq r\rest
  \vare$, then there is $s\in\cA$ such that $r\rest\vare$ and $s\rest \vare$
  are compatible and $f(s)\leq s$. 
\end{enumerate}
\end{observation}

\begin{proposition}
\label{X.3}
Let $w\in [\zeta^*+1]^{<\lambda}$ be a closed set containing
$0,\zeta^*$. Assume that $(\cA,f,w)$ satisfies (i), (ii) and (iii) of
Definition \ref{X.1} and (v) of Observation \ref{X.2A}. Then there is
$(p,w)\in \bbP^{\rm RS}_{\zeta^*}$ such that $(\cA,f,w)$ is a standard
representation of $(p,w)$.
\end{proposition}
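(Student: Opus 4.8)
The plan is to build the function $p$ interval by interval along $w$, reading off its values from $f$ along the generic, and then to verify clause (iv) of Definition \ref{X.1} by induction on the enumeration of $w$. Clauses (i)--(iii) and (v) are part of the hypothesis, so the only thing to produce is $p$ together with the forced agreement (iv); in particular (iii) is merely carried over and is not needed to construct $p$.

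\textbf{Construction.} For successive members $\vare<\vare'$ of $w$ I would let $p\rest[\vare,\vare')$ be the $\bbP_\vare$--name $\name{x}_\vare$ for the following element of $\bbP_{\vare'}$: working in $\bV[G_\vare]$, if some $r\in\cA$ satisfies $r\rest\vare\in G_\vare$, set $\name{x}_\vare=f(r)\rest[\vare,\vare')$, and otherwise set $\name{x}_\vare=\emptyset_{\bbP_{\vare'}}\rest[\vare,\vare')$. The only delicate point is that the first clause be unambiguous, and this is exactly what (v) delivers: if $r_1,r_2\in\cA$ both have $r_i\rest\vare\in G_\vare$, then $r_1\rest\vare$ and $r_2\rest\vare$ are compatible in $\bbP_\vare$, so by clause (v) of Observation \ref{X.2A} we get $f(r_1)\rest\vare'=f(r_2)\rest\vare'$ and hence $f(r_1)\rest[\vare,\vare')=f(r_2)\rest[\vare,\vare')$. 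Thus $\name{x}_\vare$ is a well-defined $\bbP_\vare$--name, and since each $f(r)\rest[\vare,\vare')$ has support contained in $[\vare,\vare')$, so does $\name{x}_\vare$. As the half-open intervals $[\vare,\vare')$ over successive pairs of $w$ partition $\zeta^*$, this yields a function $p$ with $\dom(p)\subseteq\zeta^*$ satisfying $(\otimes)_1$; together with the given closed $w$ (with $0,\zeta^*\in w$) this gives $(p,w)\in\bbP^{\rm RS}_{\zeta^*}$.

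\textbf{Verification of (iv).} Let $\langle\vare_\xi:\xi\leq\xi^*\rangle$ be the increasing (continuous, since $w$ is closed) enumeration of $w$, so $\vare_0=0$ and $\vare_{\xi^*}=\zeta^*$. By induction on $\xi\leq\xi^*$ I would prove
\[(\star_\xi)\qquad (\forall r\in\cA)\bigl(r\rest\vare_\xi\forces_{\bbP_{\vare_\xi}} p\rest\vare_\xi=f(r)\rest\vare_\xi\bigr),\]
the equality read coordinatewise in $\bbP_{\vare_\xi}$. The case $\xi=0$ is vacuous. At a successor $\xi=\eta+1$, taking $r$ itself as the witness in the construction gives $r\rest\vare_\eta\forces p\rest[\vare_\eta,\vare_{\eta+1})=f(r)\rest[\vare_\eta,\vare_{\eta+1})$; this is a statement decided below $\vare_{\eta+1}$, and combining it with $(\star_\eta)$ yields $r\rest\vare_\eta\forces p\rest\vare_{\eta+1}=f(r)\rest\vare_{\eta+1}$, which then lifts from $\bbP_{\vare_\eta}$ to give $r\rest\vare_{\eta+1}\forces_{\bbP_{\vare_{\eta+1}}}p\rest\vare_{\eta+1}=f(r)\rest\vare_{\eta+1}$, i.e. $(\star_{\eta+1})$. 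At a limit $\xi$, since $\vare_\xi=\sup_{\eta<\xi}\vare_\eta$ and each $(\star_\eta)$ lifts to $r\rest\vare_\xi$, agreement holds on every coordinate below $\vare_\xi$ and hence at the limit stage, giving $(\star_\xi)$. Finally, for a successive pair $(\vare,\vare')=(\vare_{\xi_0},\vare_{\xi_0+1})$, combining $(\star_{\xi_0})$ with the interval identity $r\rest\vare\forces p\rest[\vare,\vare')=f(r)\rest[\vare,\vare')$ is precisely clause (iv). Since (i)--(iii) and (v) are assumed, $(\cA,f,w)$ is a standard representation of $(p,w)$.

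I expect the main obstacle to be purely the well-definedness of the names $\name{x}_\vare$ -- that is, pinning down that clause (v) is exactly what guarantees the generic choice of a witnessing $r$ does not alter the value $f(r)\rest[\vare,\vare')$; once that is secured, the rest is routine iteration bookkeeping. A secondary point requiring care is the coordinatewise reading of the forced equalities in $(\star_\xi)$ at limit members of $w$, where one must argue that agreement on every proper initial segment forces agreement at the limit stage, and that the successor-step equality is indeed decided below $\vare_{\eta+1}$ so that it may be lifted along the iteration.
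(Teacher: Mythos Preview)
Your proposal is correct and follows exactly the paper's approach: the paper defines $(p,w)$ by declaring $r\rest\vare\forces_{\bbP_\vare} p\rest[\vare,\vare')=f(r)\rest[\vare,\vare')$ for successive $\vare<\vare'$ in $w$ and $r\in\cA$, then simply writes ``Now check.'' Your construction of the names $\name{x}_\vare$ is precisely this definition, your use of clause (v) for well-definedness is the intended point, and your induction $(\star_\xi)$ is the ``check'' the paper omits.
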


\begin{proof}
  Define $(p,w)$ so that  
\[r\rest\vare\forces_{\bbP_\vare} p\rest [\vare,\vare')=f(r)\rest [\vare,
\vare')\]
for $\vare\in w\cap\zeta^*$, $\vare'=\min(w\setminus(\vare+1))$ and $r\in
\cA$. Now check. 
\end{proof}

\begin{definition}
  \label{X.4}
Let $(\cA_1,f_1,w_1),(\cA_2,f_2,w_2)$ be standard representations. We write
$(\cA_1,f_1,w_1) \preccurlyeq (\cA_2,f_2,w_2)$ whenever 
\begin{enumerate}
\item[$(\circledast)_1$] $(\forall r\in \cA_2)(\exists s\in \cA_1)(s\leq r)$
  and 
\item[$(\circledast)_2$] if $r\in\cA_2$, $s\in \cA_1$, $s\leq r$ and
  $f(r)\leq r$, then $f(s)\leq f(r)$. 
\end{enumerate}
\end{definition}

\begin{proposition}
\label{X.4A}  
The relation $\preccurlyeq$ is transitive on standard representations. 
\end{proposition}

\begin{proof}
Assume $(\cA_1,f_1,w_1)  \preccurlyeq (\cA_2,f_2,w_2)  \preccurlyeq
(\cA_3,f_3,w_3)$. Suppose that $r_i\in\cA_i$ (for $i=1,2,3$) are such that
$r_1\leq r_2\leq r_3$ and $f_3(r_3)\leq r_3$. Then also $f_2(r_2)\leq
f_3(r_3)$ and consequently $f_2(r_2)$ and $r_2$ are compatible. By
\ref{X.1}(iii) we have $f_2(r_2)\leq r_2$ and therefore also $f_1(r_1)\leq
f_2(r_2)$. Together $f_1(r_1)\leq f_2(r_2)\leq f_3(r_3)$, and the
transitivity of $\preccurlyeq$ follows. 
\end{proof}

\begin{observation}
\label{X.5}
Suppose that $(p_1,w_1), (p_2,w_2)\in \bbP^{\rm RS}_{\zeta^*}$ and let 
$(\cA_1,f_1,w_1)$ and $(\cA_2,f_2,w_2)$ be their standard
representations. Assume $(\cA_1,f_1,w_1)\preccurlyeq (\cA_2,f_2,w_2)$. Then
$(p_1,w_1)\leq' (p_2,w_2)$.   
\end{observation}

\begin{proposition}
  \label{X.6}
Let $(p_1,w_1),(p_2,w_2)\in \bbP^{\rm RS}_{\zeta^*}$ be such that
$(p_1,w_1) \leq' (p_2,w_2)$. Suppose that $(\cA_1,f_1,w_1)$ is a standard
representation of $(p_1,w_1)$. Then there is a standard representation
$(\cA_2,f_2,w_2)$ for $(p_2,w_2)$ such that $(\cA_1,f_1,w_1) \preccurlyeq
(\cA_2,f_2,w_2)$. 
\end{proposition}

\begin{proof}
Choose a standard representation $(\cA_2,f_2,w_2)$ of $(p_2,w_2)$ such that
$(\forall s\in \cA_2)(\exists r\in \cA_1)(r\leq s)$ (possible by
Observation \ref{X.2}). Then plainly the demand $(\circledast)_2$
of Definition \ref{X.4} is also satisfied, so $(\cA_1,f_1,w_1)\preccurlyeq
(\cA_2,f_2,w_2)$.  
\end{proof}

\subsection{The Forcings}
Let us recall the definitions of the main forcing notions we are interested 
in. These forcing notions generalize the well known classical forcings with
trees used in the set theory of the reals. Thus both $\bbQ^1_\lambda$ and
$\bbQ^2_\lambda$ (defined below in \ref{defFilter}) represent possible 
generalizations of Miller's rational perfect set forcing, see Miller
\cite{Mil84a}. The Laver forcing notion (see Laver \cite{L1}) is generalized
by forcings $\bbQ^4_\lambda$. Due to flexibility of our parameters, the
schemes presented here generalize the Cohen and the Sacks forcing notions as
well, see Remark \ref{oldfor}. The many ``bounded'' variations of the
classical forcings with trees have suitable nice generalizations if
$\lambda$ is inaccessible, see Definition \ref{boundedPQ}. In Definition
\ref{Silver}(1) we will introduce generalizations of the Silver forcing notion
(or perhaps rather the Grigorieff forcing, see \cite[\S3]{Gri71}). Finally,
relatives of the Hechler forcing (Hechler  \cite{Hechler74}) and Eventually
Different Real forcing (Miller \cite[\S 5]{Mi81}) are introduced in Definition
\ref{Silver}(2,3).   

\begin{definition}
\label{defFilter}
Suppose that $\bar{E}=\langle E_t:t\in {}^{{<}\lambda}\lambda\rangle$ is a
system of $({<}\lambda)$--complete filters on a regular cardinal $\lambda$
and $E$ is a normal filter on $\lambda$. We define forcing notions
$\bbQ^{\ell,\bar{E}}$ (for $\ell=1,2,3,4$) and $\bbQ^{1,\bar{E}}_E$ and
$\bbQ^{3,\bar{E}}_E$ as follows.
\begin{enumerate}
\item {\bf A condition} in $\bbQ^{2,\bar{E}}$ is a complete $\lambda$--tree
$T\subseteq {}^{{<}\lambda}\lambda$ such that  
\begin{enumerate}
\item[(a)] if $t\in T$, then either $|\suc_T(t)|=1$ or $\suc_T(t)\in E_t$, and  
\item[(b)] $(\forall t\in T)(\exists s\in T)(t\vtl s\ \&\ |\suc_T(s)|>1)$, and    
\item[(c)$^2$] if $j<\lambda$ and a sequence $\langle
  t_i:i<j\rangle\subseteq T$ is $\vtl$--increasing,  $|\suc_T(t_i)|>1$ for
  all $i<j$ and $t=\bigcup\limits_{i<j}t_i$, then ($t\in T$ and)
  $|\suc_T(t)|>1$, 
\end{enumerate}
{\bf the order} $\leq$ of $\bbQ^{2,\bar{E}}$ is the inverse inclusion, i.e.,
$T_1\leq T_2$ if and only if $T_1,T_2\in \bbQ^{2,\bar{E}}$ and
$T_2\subseteq T_1$. 
\item Forcing notions $\bbQ^{1,\bar{E}}, \bbQ^{3,\bar{E}}, \bbQ^{4,\bar{E}}$  
  are defined analogously, but the demand (c)$^2$ is replaced by the
  respective (c)$^\ell$:
\begin{enumerate}
\item[(c)$^1$] for every $\lambda$--branch $\eta\in\lim_\lambda(T)$ the set
  \[\{\alpha<\lambda: |\suc_T(\eta\rest\alpha)|>1\}\]
includes a closed unbounded set. 
\item[(c)$^3$] for some closed unbounded set $C\subseteq \lambda$ consisting
  of limit ordinals we have   
\[(\forall t\in T)(\lh(t)\in C\ \Leftrightarrow\ |\suc_T(t)|>1),\] 
\item[(c)$^4$] $(\forall t\in T)(\mrot(T)\vtl t\ \Rightarrow\
  |\suc_T(t)|>1)$. 
\end{enumerate}
\item The forcing
  notions $\bbQ^{1,\bar{E}}_E$ and $\bbQ^{3,\bar{E}}_E$ are defined like
  $\bbQ^{1,\bar{E}}$ and $\bbQ^{3,\bar{E}}$, but in demands (c)$^1$ and
  (c)$^3$ we replace ``a  closed unbounded set'' by ``a set of limit
  ordinals belonging to the filter $E$''. 
\item If each $E_t$ is the club filter of $\lambda$ (for all $t\in 
  {}^{{<}\lambda} \lambda$), then we omit $\bar{E}$ and we write 
  $\bbQ^\ell_\lambda$ instead of $\bbQ^{\ell,\bar{E}}$.
\end{enumerate}
\end{definition}

\begin{remark}
  \label{oldfor}
\begin{enumerate}
\item Note that our definition of $\qthree$ slightly differs from the one in
  \cite{RoSh:942}, however the forcing defined here is a dense subset of the
  one defined there. 
\item The forcing notion $\bbQ^{2,\bar{E}}$ was studied by Brown and
  Groszek \cite{BrGr06} who described when this forcing adds a generic
  of minimal degree. 
\item Remember that in Definition \ref{defFilter} we allow the filters $E_t$
  to be principal. Thus if $E_t=\{\lambda\}$ for each $t\in
  {}^{{<}\lambda}\lambda$, then $\bbQ^{4,\bar{E}}$ is the $\lambda$--Cohen
  forcing $\bbC_\lambda$ and $\bbQ^{2,\bar{E}}$ is the forcing
  $\bbD_\lambda$ from \cite[Proposition 4.10]{RoSh:655}. If for each 
$t\in {}^{{<}\lambda}\lambda$ we let $E_t$ be the filter of all subsets
of $\lambda$ including $\{0,1\}$, then the forcing notion
$\bbQ^{2,\bar{E}}$ will be equivalent with Kanamori's
$\lambda$--Sacks forcing of \cite[Definition 1.1]{Ka80}.  
\item A relative of $\bbQ^2_\lambda$ was used in iterations in Friedman
and Zdomskyy \cite{FrZd10} and Friedman, Honzik and Zdomskyy
\cite{FrHoZd13}. It was called ${\rm Miller}(\lambda)$ there and the main
difference between the two forcings is in condition \cite[Definition
2.1(vi)]{FrZd10}. 
\item The property introduced in this paper does not ``capture''
  $\bbQ^1_\lambda$. In a subsequent work we will modify it to include more
  forcings of the form $\bbQ^{1,\bar{E}}_E$. However one should note that
  the forcing notions $\bbQ^1_\lambda$ and $\bbQ^2_\lambda$ are very
  similar. If in the demand \ref{defFilter}(2)(c)$^1$ we replace ``includes
  a closed unbounded set'' with ``is closed unbounded'' then we clearly get
  an equivalent definition of $\bbQ^2_\lambda$. In \cite[Section
  4]{RoSh:942} we showed that consistently the forcing notions
  $\bbQ^1_\lambda$ and $\bbQ^2_\lambda$ are equivalent, but also
  consistently they are not equivalent.
\end{enumerate}
\end{remark}

\begin{definition}
\label{boundedPQ}
Assume that 
\begin{itemize}
\item $\lambda$ is weakly inaccessible, $\varphi:\lambda\longrightarrow
  \lambda$  is a strictly increasing function such that each
  $\varphi(\alpha)$ is a regular uncountable cardinal and
  $\alpha<\varphi(\alpha)$ (for $\alpha<\lambda$),   
\item $\bar{F}=\langle F_t:t\in\bigcup\limits_{\alpha<\lambda}
  \prod\limits_{\beta<\alpha} \varphi(\beta)\rangle$ where $F_t$ is a 
  ${<}\varphi(\alpha)$--complete filter on $\varphi(\alpha)$ whenever $t\in 
  \prod\limits_{\beta<\alpha}\varphi(\beta)$, $\alpha<\lambda$,
\item $E$ is a normal filter on $\lambda$. 
\end{itemize}
\begin{enumerate}
\item We define a forcing notion $\bbQ^2_{\varphi,\bar{F}}$ as follows.\\  
{\bf A condition} in $\bbQ^2_{\varphi,\bar{F}}$ is a complete
  $\lambda$--tree $T\subseteq \bigcup\limits_{\alpha<\lambda}
  \prod\limits_{\beta<\alpha}\varphi(\beta)$ such that  
\begin{enumerate}
\item[(a)] for every $t\in T$, either $|\suc_T(t)|=1$ or $\suc_T(t)\in  
  F_t$, and
\item[(b)] $(\forall t\in T)(\exists s\in T)(t\vtl s\ \&\ |\suc_T(s)|>1)$,
  and 
\item[(c)$^2$] if $j<\lambda$ and a sequence $\langle
  t_i:i<j\rangle\subseteq T$ is $\vtl$--increasing,  $|\suc_T(t_i)|>1$ for
  all $i<j$ and $t=\bigcup\limits_{i<j}t_i$, then ($t\in T$ and)
  $|\suc_T(t)|>1$. 
\end{enumerate}
\noindent {\bf The order} of $\bbQ^2_{\varphi,\bar{F}}$ is the reverse
inclusion.  
\item Forcing notions $\bbQ^\ell_{\varphi,\bar{F}}$ for $\ell=1,3,4$ are
  defined similarly, but the demand (c)$^2$ is replaced by the respective 
  (c)$^\ell$:  
\begin{enumerate}
\item[(c)$^1$] for every $\eta\in\lim_\lambda(T)$ the set $\{\alpha<\lambda:
  |\suc_T(\eta\rest\alpha)|>1\}$ contains a  closed unbounded subset of
  $\lambda$.  
\item[(c)$^3$] for some closed unbounded set $C\subseteq\lambda$ consisting
  of limit ordinals we have    
\[\big(\forall t\in T\big)\big(\lh(t)\in C\ \Leftrightarrow\ |\suc_T(t)|>1
\big).\] 
\item[(c)$^4$] $(\forall t\in T)(\mrot(T)\vtl t\ \Rightarrow\
  |\suc_T(t)|>1)$. 
\end{enumerate}
\item Replacing ``a  closed unbounded set'' in (c)$^1$ and (c)$^3$ by ``a
  set of limit ordinals belonging to the filter $E$'' will define forcing
  notions $\bbQ^1_{\varphi,\bar{F},E}$ and $\bbQ^3_{\varphi,\bar{F},E}$,
  respectively.    
\end{enumerate}
\end{definition}

\begin{remark}
If $\lambda$ is strongly inaccessible and $\varphi,\bar{F}$ are as in 
\ref{boundedPQ}, then $\bbQ^3_{\varphi,\bar{F}}$  is a dense subset of 
$\bbQ^2_{\varphi,\bar{F}}$. 
\end{remark}

\begin{definition}
\label{Silver}
\begin{enumerate}
\item Assume that $\psi:\lambda\longrightarrow(\lambda+1)\setminus \{0,1\}$ and 
$E$ is a normal filter on $\lambda$. We define a forcing notion 
$\bbS^\psi_E$ as follows.\\
{\bf A condition} in $\bbS^\psi_E$ is a function $p$ such that 
$\dom(p)\subseteq \lambda$, $\lambda\setminus \dom(p)\in E$ and 
$p(i)<\psi(i)$ for each $i\in\dom(p)$,\\
{\bf the order} $\leq$ of $\bbS^\psi_E$ is the inclusion, i.e., $p\leq q$ if 
and only if $p,q\in \bbS^\psi_E$ and $p\subseteq q$. 

For $p\in \bbS^\psi_E$ we also set $\rt(p)=\min\big(\lambda\setminus 
\dom(p)\big)$ (the notation rt points to analogy with ``the length of the {\bf 
  r}oo{\bf t}'').  
\item {\bf A condition} in a forcing notion $\bbH_\lambda$ is a pair $(s,g)$
  such that $s\in {}^{\lambda{>}}\lambda$ and $g\in {}^\lambda\lambda$.\\ 
{\bf The order} $\leq$ of $\bbH_\lambda$ is defined by: $(s,g)\leq (s',g')$
if and only if ($(s,g),(s',g')\in\bbH_\lambda$ and) $s\trianglelefteq s'$,
$g(\alpha)\leq s'(\alpha)$ for each $\alpha\in [\lh(s),\lh(s'))$ and
$g(\alpha)\leq g'(\alpha)$ for all $\alpha\in [\lh(s'),\lambda)$. 
\item {\bf A condtion} in a forcing notion $\bbE_\lambda$ is a pair
  $(s,\bar{G})$ such that $s\in {}^{\lambda{>}}\lambda$, $\bar{G}=\langle
  G_\beta:\beta<\lambda\rangle$ and for some $\mu<\lambda$, for every
  $\beta<\lambda$ we have $G_\beta\in [\lambda]^{\leq\mu}$.\\ 
{\bf The order} $\leq$ of $\bbE_\lambda$ is defined by: $(s,\bar{G})\leq
(s',\bar{G}')$ if and only if ($(s,\bar{G}),(s',\bar{G}')\in\bbE_\lambda$
and) $s\trianglelefteq s'$, $s'(\alpha)\notin G_\alpha$ for each $\alpha\in
[\lh(s),\lh(s'))$ and $G_\alpha\subseteq G'_\alpha$ for all $\alpha\in
[\lh(s'),\lambda)$.  
\end{enumerate}
\end{definition}

\begin{observation}
\label{easyComplete}
Assume $\lambda^{<\lambda}=\lambda$. 
\begin{enumerate}
\item For $\bar{E}$ as in \ref{defFilter} and $\ell\in\{1,2,3,4\}$, the
  forcing notion $\bbQ^{\ell,\bar{E}}$ is $({<}\lambda)$--lub--complete
  (i.e., increasing sequences of length ${<}\lambda$ have least upper
  bounds).   Likewise for $\bbQ^{1,\bar{E}}_E$ and $\bbQ^{3,\bar{E}}_E$. 
\item For $\varphi,\bar{F}$ as in \ref{boundedPQ} and $\ell\in\{1,2,3,4\}$, 
  the forcing notion $\bbQ^\ell_{\varphi,\bar{F}}$ is strategically
  $({<}\lambda)$--complete. Moreover, if $\bar{T}=\langle T_\alpha:\alpha<
  \delta\rangle\subseteq \bbQ^\ell_{\varphi,\bar{F}}$ is
  $\leq_{\bbQ^\ell_{\varphi,\bar{F}}}$--increasing and $\mrot(T_\alpha)\vtl
  \mrot(T_\beta)$ for $\alpha<\beta<\delta<\lambda$, then
  $\bigcap\limits_{\alpha<\delta}T_\alpha\in \bbQ^\ell_{\varphi,\bar{F}}$ is
  the least upper bound to $\bar{T}$.  Similarly for
  $\bbQ^1_{\varphi,\bar{F},E}$ and $\bbQ^3_{\varphi,\bar{F},E}$. 
\item For $E$ and $\psi$ as in \ref{Silver}, the forcing notion
  $\bbS^\psi_E$ is $({<}\lambda)$--lub--complete.
\item The forcing notions $\bbH_\lambda$ and $\bbE_\lambda$ are
  $({<}\lambda)$--lub--complete. 
\end{enumerate}
\end{observation}

Arguably, one of the most important properties of forcing notions with trees
in the set theory of the reals is the possibility to have fusion for
suitable $\omega$--sequences of trees. Similar properties hold for our
forcing notions (with respect to $\lambda$--sequences of conditions). The
next two lemmas are actually fusion lemmas exemplifying this similarity.  

\begin{lemma}
\label{pre2.5}
Assume that 
\begin{itemize}
\item either $\bar{E}$ and $E$ are as in \ref{defFilter}, $1\leq \ell\leq 4$, and
  $\bbP=\bbQ^{\ell,\bar{E}}$, or $\bbP=\bbQ^{1,\bar{E}}_E$, or
  $\bbP=\bbQ^{3,\bar{E}}_E$,  
\item or $\lambda,\varphi,\bar{F}$ and $E$ are as in \ref{boundedPQ},
  $1\leq\ell\leq 4$ and $\bbP=\bbQ^\ell_{\varphi,\bar{F}}$ or $\bbP=
  \bbQ^1_{\varphi,\bar{F},E}$ or $\bbP=\bbQ^3_{\varphi,\bar{F},E}$.  
\end{itemize}
Suppose that $\gamma\leq \lambda$ and $T^\delta\in\bbP$ for $\delta<\gamma$
are such that  
\begin{enumerate}
\item[(i)] $T^{\delta+1}\subseteq T^\delta$ and $T^\delta\cap
  {}^\delta\lambda = T^{\delta+1}\cap {}^\delta\lambda$,  
\item[(ii)] if $\delta$ is limit, then $T^\delta=\bigcap\limits_{i<\delta}
  T^i$, 
\item[(iii)] if $t\in T^\delta\cap {}^\delta\lambda$ and
  $|\suc_{T^\delta}(t)|>1$, then $|\suc_{T^{\delta+1}}(t)|>1$.  
\end{enumerate}
Then $T^\gamma\stackrel{\rm def}{=}\bigcap\limits_{\delta<\gamma}
T^\delta\in \bbP$. 
\end{lemma}

\begin{proof}
For $\qell$ it was proved in \cite[Lemma 3.5]{RoSh:942}; for other cases
the arguments are essentially the same. 
\end{proof}

\begin{lemma}
  \label{fusSil}
Assume that $\psi,E$ are as in \ref{Silver}. Suppose that $p_\delta\in
\bbS^\psi_E$ for $\delta<\lambda$ are such that 
\begin{enumerate}
\item[(i)] $p_\delta\subseteq p_{\delta+1}$ and $p_\delta\rest
  (\delta+1)=p_{\delta+1} \rest (\delta+1)$,  and 
\item[(ii)] if $\delta$ is limit, then $p_\delta=\bigcup\limits_{i<\delta}
  p_i$.
\end{enumerate}
Then $p_\lambda\stackrel{\rm def}{=} \bigcup\limits_{\delta<\lambda}
p_\delta\in \bbS^\psi_E$. 
\end{lemma}

\begin{proof}
  Clearly $p_\lambda$ is a function with $\dom(p)= \bigcup\limits_{\delta<
    \lambda} \dom(p_\delta)\subseteq \lambda$ and it satisfies
  $p_\lambda(i)<\psi(i)$ for each $i\in \dom(p_\lambda)$. We should to argue
  that $\lambda\setminus \dom(p_\lambda)\in E$. For this we just note that
  if $\delta \in \mathop{\triangle}\limits_{\alpha<\lambda}
  (\lambda\setminus \dom(p_\alpha))$ is a limit ordinal, then $\delta\notin
  \bigcup\limits_{\alpha<\delta}\dom(p_\alpha)=\dom(p_\delta)$ (by (ii)) and
  hence $\delta\notin \dom(p_{\delta+1})$ (by (i)). The assumption (i)
  implies also that $\delta\notin \dom(p_\lambda)$. 
\end{proof}

\section{Sequential purity with diamonds}
For the rest of the paper we assume the following Context. 

\begin{context}
\label{context}
\begin{enumerate}
\item $\lambda$ is a regular uncountable cardinal,
  $\lambda^{<\lambda}=\lambda$. 
\item $D$ is a normal filter on $\lambda$.
\item A set $\cS\in D^+$ contains all successor ordinals below $\lambda$,
$0\notin\cS$ and $\lambda\setminus\cS$ is unbounded in $\lambda$. For an
ordinal $\gamma<\lambda$ we define  $\cS[\gamma]=\cS\setminus
\{\delta\leq\gamma:\delta\mbox{ is limit }\}$. 
\item $\cR$ is the closure of $\lambda\setminus\cS$ and $\bar{\gamma}=
  \langle\gamma_\alpha:\alpha<\lambda\rangle$ is the increasing enumeration
  of $\cR$ (so the sequence $\bar{\gamma}$ is increasing continuous,
  $\gamma_0=0$ and all other terms of $\bar{\gamma}$ are limit ordinals). 
\end{enumerate}
\end{context}

\begin{definition}
\label{apprS}
A sequence $\bar{f}=\langle f_\delta:\delta\in\cS\rangle$ is a {\em
  $(D,\cS)$--diamond} if $f_\delta\in {}^\delta\delta$ for $\delta\in\cS$
and $(\forall\eta\in {}^\lambda\lambda)(\{\delta\in\cS:
f_\delta\vtl\eta\}\in D^+)$.
\end{definition}

\begin{observation}
\label{easyObs}
Let $\bbP$ be a strategically $({<}\lambda)$--complete forcing notion, $D$
be a normal filter on $\lambda$.
\begin{enumerate}
\item $\forces_\bbP$`` The family of all supersets of diagonal intersections
  of members of $D^\bV$ is a (proper) normal filter on $\lambda$ ''. 
(Abusing our notation, the (name for the) normal filter generated by $D$ in
$\bV^\bbP$ will also be denoted by $D$ or sometimes by
$D^{\bV[\name{G}_{\bbP}]}$.) 
\item If $\bar{f}$ is a $(D,\cS)$--diamond, then $\forces_\bbP$`` $\bar{f}$
  is a $(D^{\bV[\name{G}_\bbP]},\cS)$--diamond ''. 
\end{enumerate}
\end{observation}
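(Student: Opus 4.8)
The whole statement rests on one structural consequence of strategic $({<}\lambda)$--completeness, which I would isolate first. The plan is to prove: \emph{such $\bbP$ adds no new ${<}\lambda$--sequences of ordinals.} Given a name $\name{\tau}$ for a function from some $\gamma<\lambda$ into the ordinals and a condition $p$, I would run a play of $\Game_0^\lambda(\bbP)$ in which Complete follows a regular winning strategy $\st^0$ while I, as Incomplete, play at step $i$ some $p_i$ above the previous $q_j$'s that moreover decides $\name{\tau}(i)$; at a limit $i<\gamma$ the required upper bound of $\langle q_j:j<i\rangle$ exists \emph{precisely because} $\st^0$ is winning (so at every stage both players have legal moves). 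After $\gamma$ steps Complete's responses yield $q\geq p$ deciding $\name{\tau}\rest\gamma$ as a ground--model sequence, and since this whole play is built inside $\bV$, the conditions forcing $\name{\tau}$ into $\bV$ are dense. I would record the consequences: $\lambda$ and cofinalities ${\leq}\lambda$ are preserved, ground--model members of $D$ stay unbounded, and for any $\langle A_\alpha:\alpha<\lambda\rangle\in\bV^\bbP$ with $A_\alpha\in D^\bV$ every proper initial segment $\langle A_\alpha:\alpha<\xi\rangle$ lies in $\bV$, so each $\bigcap_{\alpha<\xi}A_\alpha\in D$ (recall a normal filter on $\lambda$ extends the club filter and is $\lambda$--complete).

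For the closure part of (1), I would work in $\bV^\bbP$ and let $D^*$ be the supersets of diagonal intersections $\triangle_{\alpha<\lambda}A_\alpha$ with $A_\alpha\in D^\bV$. Closure under supersets is definitional; closure under ${<}\lambda$--intersections follows from the identity $\bigcap_{j<\gamma}\triangle_{\alpha}A^j_\alpha=\triangle_{\alpha}\bigcap_{j<\gamma}A^j_\alpha$, where for fixed $\alpha$ the sequence $\langle A^j_\alpha:j<\gamma\rangle$ is in $\bV$ by the first paragraph, so the inner intersection is a $D^\bV$--set. That $D^\bV\subseteq D^*$ comes from constant sequences (using that $D$ extends the club filter to absorb boundedly many exceptional points). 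For normality I would use the usual ``diagonal of diagonals'': given $B_\xi\supseteq\triangle_\alpha A^\xi_\alpha$ and a ground--model pairing bijection $\lambda\leftrightarrow\lambda\times\lambda$, the set $\triangle_\xi B_\xi$ contains $E\cap\triangle_\zeta A_\zeta$ for a single re--indexed sequence $\langle A_\zeta\rangle$ and a club $E$ of closure points of the pairing; since $E\in D^\bV\subseteq D^*$ and $\triangle_\zeta A_\zeta\in D^*$, we get $\triangle_\xi B_\xi\in D^*$.

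The genuinely substantial point, and where I expect the main obstacle to sit, is \emph{properness}: every $\triangle_\alpha A_\alpha$ along a possibly new sequence is nonempty; I would in fact show it is in $(D^\bV)^+$, which is what also makes (2) run. Suppose $p\forces\triangle_\alpha\name{A}_\alpha\cap\check X=\emptyset$ for some $X\in(D^\bV)^+$; then there is a name with $p\forces$ ``$\name{f}:\check X\to\lambda$ is regressive and $\nu\notin\name{A}_{\name{f}(\nu)}$ for $\nu\in\check X$''. I would fix $N\prec(\cH(\chi),\in,<^*_\chi)$ of size ${<}\lambda$ with ${}^{<|N|}N\subseteq N$ containing $p,\name{f},\langle\name{A}_\alpha\rangle,X,D,\cS$, and put $\delta_N=N\cap\lambda\in\lambda$; using $\lambda$--completeness and normality of $D$ in $\bV$ I would arrange $\delta_N\in X$ and that $\delta_N$ reflects $D$ (lies in every $D$--set ``available to $N$''). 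Running the strategy through the ${<}\lambda$ dense subsets of $\bbP$ in $N$ produces an $(N,\bbP)$--generic $q\geq p$; then $q$ forces $\name{f}(\check\delta_N)$ to some $\alpha^*\in N\cap\delta_N$, whence $q\forces\delta_N\in\name{A}_{\alpha^*}$, contradicting $\delta_N\notin\name{A}_{\name{f}(\delta_N)}$. The hard part is exactly this last implication: guaranteeing $q\forces\delta_N\in\name{A}_{\alpha^*}$ for the ground--model--valued $D$--name $\name{A}_{\alpha^*}\in N$ requires controlling the bounded part $\name{A}_{\alpha^*}\cap\check\delta_N$ through $(N,\bbP)$--genericity together with the ground--model normality of $D$ (a diagonal--intersection guessing of $\delta_N$), and this is where a genuinely new $\lambda$--sequence is tamed.

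For (2), given a name $\name{\eta}$ for an element of ${}^\lambda\lambda$ I must show $p\forces\{\delta\in\cS:\check f_\delta\vtl\name{\eta}\}\in(D^{\bV[\name{G}_{\bbP}]})^+$, which by (1) means meeting every $\triangle_\alpha\name{A}_\alpha$. The enabling fact is again that each $\name{\eta}\rest\delta$ is forced into $\bV$, so ``$f_\delta\vtl\name{\eta}$'' is decided by bounded information. I would repeat the reflection of the previous paragraph: build an $(N,\bbP)$--generic $q$ with $\delta_N\in\cS$ chosen to lie in $\triangle_\alpha A_\alpha$; $q$ decides $\name{\eta}\rest\delta_N=e$ for some $e\in N\cap{}^{\delta_N}\delta_N$, and I extend $e$ to a ground--model branch $\eta^*\in{}^\lambda\lambda$. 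The ground--model diamond gives $\{\delta\in\cS:f_\delta\vtl\eta^*\}\in(D^\bV)^+$, and by interleaving this guessing set into the choice of $\delta_N$ I would arrange that $\delta_N$ is a guess point with $f_{\delta_N}=\eta^*\rest\delta_N=e$, so $q\forces f_{\delta_N}\vtl\name{\eta}$ and $q\forces\delta_N\in\triangle_\alpha\name{A}_\alpha$, contradicting disjointness; hence $\bar f$ remains a $(D,\cS)$--diamond. The extra difficulty over (1) is precisely this coordination, forcing $\delta_N$ to be simultaneously $D$--reflecting, in $\cS$, and a diamond--guess point for the branch decided cofinally in $N$, which I would manage by a simultaneous induction.
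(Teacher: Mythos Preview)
Your closure arguments for (1) are fine, and your first paragraph correctly extracts that strategic $({<}\lambda)$--completeness prevents new ${<}\lambda$--sequences of ordinals. But for the \emph{properness} of $D^*$ and for all of (2) you are taking an unnecessarily elaborate route through elementary submodels, and in (2) this route has a genuine circularity that you do not close.

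The paper treats the statement as an Observation precisely because there is a direct argument using $\Game_0^\lambda(\bbP)$ itself, not merely the ``no new short sequences'' corollary. Given $p$ and names $\langle\name{A}_\alpha:\alpha<\lambda\rangle$ for members of $D^\bV$ (and, for (2), a name $\name{\eta}$), have Incomplete play so that at stage $\alpha+1$ his condition $p_{\alpha+1}$ decides $\name{A}_\alpha=B_\alpha\in D^\bV$ (and $\name{\eta}(\alpha)=\eta^*(\alpha)$). Since Complete follows a winning strategy, the play reaches every $\delta<\lambda$, and in $\bV$ you now have a genuine $\lambda$--sequence $\langle B_\alpha:\alpha<\lambda\rangle\subseteq D$ (and $\eta^*\in{}^\lambda\lambda$). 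Normality in $\bV$ gives $\triangle_\alpha B_\alpha\in D$; any $\delta\in X\cap\triangle_\alpha B_\alpha$ yields $p_\delta\geq p$ forcing $\delta\in X\cap\triangle_\alpha\name{A}_\alpha$. For (2), the ground--model diamond applied to $\eta^*$ gives a $\delta\in\cS\cap\triangle_\alpha B_\alpha$ with $f_\delta=\eta^*\rest\delta$, and again $p_\delta$ provides the witness. No elementary submodels, no reflection of $\delta_N$, no regressive--function detour.

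Your submodel argument for (1) can be made to work, but you gloss over the nontrivial step of arranging $\delta_N\in X\cap\bigcap(D\cap N)$, which already requires building a continuous $\lambda$--chain of models and selecting the right one. For (2) there is an actual gap: you fix $N$, then $q$, then $e=\name{\eta}\rest\delta_N$, then extend $e$ to $\eta^*$ --- but the diamond guessing set $\{\delta:f_\delta\vtl\eta^*\}$ depends on $\eta^*$, which was determined only \emph{after} $\delta_N$ was fixed. Your ``simultaneous induction'' would have to build a coherent chain $(N_i,q_i)$ deciding $\name{\eta}$ cofinally so that $\eta^*$ is defined in $\bV$ independently of any single $\delta_{N_i}$; once you do that, the models $N_i$ are doing no work and you have rediscovered the direct play of $\Game_0^\lambda(\bbP)$ described above.
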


A pure extension of a tree is a subtree with the same root. This concept
appears in several places in the classical forcing with trees, e.g., in the
Laver forcing every sentence can be decided by passing to a pure extension
of a condition. The relations $\leq_\pr$ of pure extensions were important
ingredients of the properties studied in \cite[Section 2]{RoSh:888} and
\cite{RoSh:942}. The property introduced in the present paper also involves
pure extensions, but for technical reasons they are considered for bounds of
increasing sequences of conditions. However, for our ``test tree forcing
notions'' the intuitions behind the sequential purity $R^\pr$ introduced in
Definition \ref{n1.0} below should be that it is essentially the same as
$\leq_\pr$. For instance, in $\bbQ^2_\lambda$ we say that $T\in
\bbQ^2_\lambda$ is {\em a pure bound\/} to $\bar{T}=\langle T_\alpha: 
\alpha<\delta\rangle$ if $\bar{T}$ is an increasing sequence of conditions
from $\bbQ^2_\lambda$, $\delta<\lambda$ is a limit ordinal, $\alpha\leq
\lh(\mrot(T_\alpha))<\delta$ and $T$ is a pure extension of
$\bigcap\limits_{\alpha<\delta} T_\alpha$. (We will write then $\bar{T}\;
R^\pr\; T$.)  Plainly, an increasing sequence of conditions in
$\bbQ^2_\lambda$ of length $<\lambda$, all of them with the same root $t$,
has an upper bound with the root $t$. Therefore, the relation $R^\pr$
introduced above is actually a $\lambda$--sequential$^+$ purity on
$\bbQ^2_\lambda$.  

\begin{definition}
\label{n1.0}
Let $\bbQ$ be a forcing notion. A binary relation $R^\pr$ is called {\em a
  $\lambda$--sequential purity\/} on $\bbQ$ whenever $\bar{r}\; R^\pr\; r$
implies 
\begin{enumerate}
\item[(a)] $\bar{r}=\langle r_\alpha:\alpha<\delta\rangle$ is a
  $\leq_\bbQ$--increasing sequence of conditions from $\bbQ$ of limit length 
  $\delta<\lambda$, and 
\item[(b)] $r\in\bbQ$ is an upper bound of $\bar{r}$ (i.e.,
  $r_\alpha\leq_\bbQ r$ for all $\alpha<\delta$). 
\end{enumerate}
If, additionally, the relation $R^\pr$ satisfies
\begin{enumerate}
\item[(c)] if $\bar{r}=\langle r_\alpha:\alpha<\delta\rangle$, $\bar{r}\;
  R^\pr\; s_\beta$ for $\beta<\xi$, $\xi< |\delta|^+$ and $s_\beta\leq
  s_\gamma$ for $\beta<\gamma<\xi$, then there is a condition $s\in\bbQ$
  stronger than all $s_\beta$ (for $\beta<\xi$) and such that $\bar{r}\;
  R^\pr\; s$, 
\end{enumerate}
then we say that $R^\pr$ is {\em a $\lambda$--sequential$^+$ purity\/} on
$\bbQ$. 
\end{definition}

The next definition introduces the main technical concept of the paper: the
game   $\Game^\cS_\gamma(r,N,h,\bbQ,R^\pr,\bar{f},\bar{q})$ between two
players, Generic and Antigeneric. Suppose that $\bbQ=\bbQ^2_\lambda$. In a
play of $\Game^\cS_\gamma$ the two players construct two descending (with
respect to the inclusion) sequences of trees $\langle T^-_i:i<\lambda
\rangle\subseteq\bbQ^2_\lambda\cap N$ and $\langle T_i:i<\lambda\rangle
\subseteq \bbQ^2_\lambda$ with the property that oftentimes $T_i\subseteq
T^-_i$. If $i\in S[\gamma]$ then the trees $T^-_i,T_i$ are chosen by
Generic, otherwise they are picked by Antigeneric. It is not unreasonable to
think that for $i<j$ we have 
\[\mrot(T^-_i)\trianglelefteq \mrot(T_i)\vtl \mrot(T^-_j)\trianglelefteq
\mrot(T_j).\] 
Then for many limit $\delta<\lambda$, a diamond sequence $\bar{f}$ guesses
(via some coding function $h$) the sequence $\langle T^-_i:i<\delta\rangle$
and $\bigcap\limits_{i<\delta} T^-_i\in \bbQ^2_\lambda\cap N$ is a tree with
the root  
\[\mrot(\bigcap\limits_{i<\delta} T^-_i)=\bigcup\limits_{i<\delta} \mrot(T^-_i)= 
\bigcup\limits_{i<\delta} \mrot(T_i).\]
The task of Generic will be to make sure that oftentimes for these limit
$\delta$ it holds that
\[T_\delta\subseteq q_\delta\ \mbox{ and }\ \mrot(T_\delta)= \mrot(q_\delta)=
\mrot(\bigcap\limits_{i<\delta} T^-_i).\] 

\begin{definition}
[Compare {\cite[Definition 2.1]{RoSh:655}}]
\label{n1.1} 
Let $(\bbQ,\leq)$ be a strategically $({<}\lambda)$--complete forcing notion
and $R^\pr$ be a $\lambda$--sequential purity on $\bbQ$. Suppose that a
model $N\prec ({\mathcal H}(\chi),{\in},{<^*_\chi})$ is such that
$|N|=\lambda$, ${}^{<\lambda}N\subseteq N$ and $\lambda,\bbQ,D,\cS,\ldots\in 
N$ (but note we do not demand $R^\pr\in N$) and a function $h:\lambda
\longrightarrow N$ is such that its range $\rng(h)$ includes $\bbQ\cap
N$. Also, let $\bar{\cI}=\langle\cI_\alpha:\alpha<\lambda \rangle$ list all
dense open subsets of $\bbQ$ belonging to $N$ and let $\gamma<\lambda$. 
\begin{enumerate}
\item We say that a sequence $\bar{f}=\langle f_\delta:\delta\in\cS\rangle$
  is {\em a $(D,\cS,h)$--semi diamond for $\bbQ$ over $N$\/} if $f_\delta\in
  {}^\delta\delta$ for $\delta\in\cS$ and 
\begin{enumerate}
\item[$(*)$] for every $\leq_\bbQ$--increasing sequence $\bar{p}=\langle 
p_\alpha:\alpha<\lambda\rangle\subseteq \bbQ\cap N$ we have
\[\{\delta\in\cS: (\forall\alpha<\delta)(h(f_\delta(\alpha))=p_\alpha)\}  
\in D^+.\]
\end{enumerate}
Below, let $\bar{f}$ be a $(D,\cS,h)$--semi diamond for $\bbQ$ over $N$.  
\item {\em An $(N,h,\bbQ,R^\pr,\bar{f},\bar{\cI})$--candidate\/} is a
  sequence $\bar{q}=\langle q_\delta:\delta\in\cS\mbox{ limit }
  \rangle$ of conditions from $N\cap\bbQ$ satisfying for each limit
  $\delta\in\cS$:   
\begin{enumerate}
\item[(a)] if $h\circ f_\delta=\langle h(f_\delta(\alpha)):
  \alpha<\delta\rangle\subseteq \bbQ\cap N$ and it has an upper bound in
  $\bbQ$, then $h(f_\delta(\alpha))\leq q_\delta$ for all $\alpha<\delta$,
  and  
\item[(b)] if, moreover, $h\circ f_\delta\in \dom(R^\pr)$, then also $h\circ
  f_\delta\; R^\pr\;q_\delta$, and
\item[(c)] if there is $q\in \bigcap\limits_{\alpha<\delta}\cI_\alpha$ such
  that $h\circ f_\delta\; R^\pr\; q$, then also $q_\delta\in
  \bigcap\limits_{\alpha<\delta}\cI_\alpha$. 
\end{enumerate}
If $N,h,\bbQ,R^\pr,\bar{f},\bar{\cI}$ are obvious from the context we may
just say {\em a candidate}. 
\item Let $\bar{q}=\langle q_\delta:\delta\in \cS\ \&\ \delta\mbox{ is limit
    }\rangle$ be a candidate and $r\in\bbQ$. We define a game
    $\Game^\cS_\gamma(r,N,h,\bbQ,R^\pr,\bar{f},\bar{q})$ of two players,
    {\em Generic\/} and {\em Antigeneric}, as follows. A play lasts 
    $\leq\lambda$ moves and in the $i^{\rm th}$ move the players try to
    choose conditions $r_i^-,r_i \in \bbQ$ and a set $C_i\in D$ so
    that\footnote{Remember Context \ref{context}(3,4)}:  

\begin{enumerate}
\item[(a)] $r\leq r_i$, and $r_i^-\in N$, and if $i\notin\cS[\gamma]\cap \cR$
  then  $r_i^-\leq r_i$,  
\item[(b)] $(\forall i<j<\lambda)(r_i\leq r_j\ \&\ r_i^-\leq r_j^-)$, and  
\item[(c)] Generic chooses $r_i^-,r_i,C_i$ if $i\in\cS[\gamma]$, and
  Antigeneric chooses $r_i^-,r_i,C_i$ if $i\notin\cS[\gamma]$.    
\end{enumerate}
At the end Generic wins the play whenever both players always had legal
moves (so the game lasted $\lambda$ steps) and 
\begin{enumerate}
\item[$(\circledast)$] if $\delta\in\cS[\gamma]\cap \bigcap\limits_{i<\delta}
  C_i$ is a limit ordinal and $h\circ f_\delta$ is an increasing sequence of 
  conditions in $\bbQ$ such that for all $\alpha<\delta$ we have
  $h(f_\delta(\alpha+1))= r^-_{\alpha+1}$, then $q_\delta\leq r_\delta$ and
  $h\circ f_\delta\; R^\pr\; r_\delta$. 
\end{enumerate}
\item Let $\bar{q}$ be a candidate (for $N,h,\bbQ,R^\pr,\bar{f}$ and
  $\bar{\cI}$). A  condition $r\in\bbQ$ is {\em generic for the candidate
    $\bar{q}$ over $N,h,\bbQ,R^\pr,\bar{f},\cS,\gamma$\/} if Generic has a
  winning strategy in the game $\Game^\cS_\gamma(r,N,h,\bbQ,R^\pr,\bar{f}, 
  \bar{q})$. 
\end{enumerate}
\end{definition}

\begin{definition}
[Compare {\cite[Definition 2.3]{RoSh:655}}]
\label{n1.3} 
Let $\bbQ$ be a strategically $({<}\lambda)$--complete forcing notion. 
\begin{enumerate}
\item We say that $\bbQ$ is {\em purely sequentially proper over
    $(D,\cS)$--semi diamonds\/} whenever the following condition $(\odot)$
  is satisfied.  
\begin{enumerate}
\item[$(\odot)$] Assume that $\chi$ is a large enough regular cardinal and 
  $N\prec (\cH(\chi),\in,<^*_\chi)$, $|N|=\lambda$, ${}^{<\lambda} N\subseteq
N$ and $\lambda,\bbQ,D,\cS,\ldots\in N$. {\em Then\/} there exists a
$\lambda$--sequential purity $R^\pr$ on $\bbQ$ such that for every ordinal
$\gamma<\lambda$, a condition $p\in \bbQ\cap N$ and every
$\bar{\cI},h,\bar{f},\bar{q}$ satisfying    
\begin{itemize}
\item $\bar{\cI}=\langle \cI_\alpha:\alpha<\lambda\rangle$ lists all open
  dense subsets of $\bbQ$ from $N$, 
\item a function $h:\lambda\longrightarrow N$ is such that $\bbQ\cap
  N\subseteq\rng(h)$, and 
\item a sequence $\bar{f}$ is a $(D,\cS,h)$--semi diamond for $\bbQ$, and 
\item $\bar{q}$ is an  $(N,h,\bbQ,R^\pr,\bar{f},\bar{\cI})$--candidate,
\end{itemize}
there is a condition $r\geq p$ generic for $\bar{q}$ over $N,h,\bbQ,
R^\pr,\bar{f},\cS,\gamma$.   
\end{enumerate}
\item If in the condition $(\odot)$ of (1) above 
  \begin{enumerate}
\item[(a)] the relation $R^\pr$ can be required to be a
  $\lambda$--sequential$^+$ purity, then we say that $\bbQ$ is {\em purely
    sequentially$^+$ proper over $(D,\cS)$--semi diamonds},
\item[(b)] if the relation $R^\pr$ does not depend on $N$, then we add the 
  adjective {\em uniformly\/} (so we say then {\em uniformly purely
    sequentially proper\/} etc).   
  \end{enumerate}
\end{enumerate}
\end{definition}

\begin{remark}
\label{correct655}
The game $\Game^\cS_\gamma(r,N,h,\bbQ,R^{\rm pr},\bar{f},\bar{q})$
is very similar to the game $\Game(r,N,h,\bbQ,\bar{f},\bar{q})$ 
of \cite[Definition 2.1(3)]{RoSh:655}. Some of the differences in the
definition of candidates are caused by the larger generality 
and the use of $R^{\rm pr}$. But one technical point is 
actually correcting an error in \cite{RoSh:655}. 
When comparing \cite[Definition 2.1(3)$(\circledast)$]{RoSh:655} and 
Definition \ref{n1.1} here, one notices that in the former paper 
the condition $(\circledast)$ is ``active'' if the semi-diamond guessed
$\langle r^-_i:i<\delta\rangle$, while here we ``activate'' this 
demand if {\em the successor terms of the sequence\/} were guessed 
correctly. In other words, current $(\circledast)$ is ``active'' 
more often. Now, in the proof of \cite[Main Claim 3.10]{RoSh:655} 
we actually use this more often ``active'' version of $(\circledast)$ -- 
on page 73, in the paragraph between (3.14) and (3.15), we set 
$\name{r}^\ominus_{j_0}(\vare_\gamma)=s^-(\vare_\gamma)$ and not 
$\name{r}^\ominus_{j_0}(\vare_\gamma)=r^-_{j_0}(\vare_\gamma)$. 
This makes ``guessing on coordinate $\vare_\gamma$'' different from 
``guessing in $\bbP_{\zeta^*}$''. To avoid the problem we make 
limit terms of semi--diamond guessing irrelevant. This change should 
be introduced to \cite[Definition 2.1(3)$(\circledast)$]{RoSh:655} and 
this condition should read:
\begin{enumerate}
\item[$(\circledast)^{\rm corrected}$] if $\delta\in 
S\cap\bigcap\limits_{i<\delta} C_i$ is a limit ordinal and $h\circ F_\delta$
is an increasing sequence of conditions satisfying $h(F_\delta(\alpha+1))
=r^-_{\alpha+1}$ for all $\alpha<\delta$, then $q_\delta\leq r_\delta$.
\end{enumerate}
Then {\em the properness over $(D,\cS)$--semi diamonds\/}
of \cite[Definition 2.3]{RoSh:655} becomes a potentially stronger 
property, but the examples presented there still satisfy it. 

Now, 
\begin{enumerate}
\item[$(\otimes)$] {\em if $\bbP$ is proper over $(D,\cS[\gamma])$--semi 
diamonds (see \cite[Definition 2.3]{RoSh:655} plus the correction 
stated above) for each $\gamma<\lambda$, then $\bbP$ is uniformly purely  
sequentially$^+$ proper over $(D,\cS)$--semi diamonds. }
\end{enumerate}
Why? First note that $\bbP$ is $({<}\lambda)$--complete and 
define a binary relation $R^{\rm pr}$ by:
\begin{enumerate}
\item[{}] $\bar{p}\; R^{\rm pr}\; p$\quad if and only if \quad 
$\bar{p}=\langle p_\alpha:\alpha<\delta\rangle$ is a 
$\leq_\bbP$--increasing sequence of conditions in $\bbP$,
$\delta<\lambda$ is a limit ordinal and $p$ is an upper 
bound to $\bar{p}$. 
\end{enumerate}
Then $R^{\rm pr}$ is a $\lambda$--sequential$^+$ purity.  Suppose
$N,h,\bar{f},\bar{\cI},\gamma,\bar{q}= \langle q_\delta:\delta\in\cS\mbox{
  is limit }\rangle$ and $p$ are as in \ref{n1.3}(1)$(\odot)$. For
$\delta\in \cS[\gamma]$ let $q^*_\delta\in\bbP$ be such that
\begin{enumerate}
\item[{}] if $\delta$ is limit, $h\circ f_\delta\subseteq 
\bbP\cap N$ is a $\leq_{\bbP}$--increasing sequence and $q_\delta$ is its
upper bound, then  $q^*_\delta=q_\delta$, \\
otherwise $q^*_\delta$ is the $<^*_\chi$--first 
member of $\bigcap\limits_{i<\delta}\cI_i$.
\end{enumerate}
Plainly, $\bar{q}^*=\langle q^*_\delta: \delta\in 
\cS[\gamma]\rangle$ is an $(N,h,\bbP)$--candidate over 
$\bar{f}$ in the sense of \cite[Definition 2.1(2)]{RoSh:655} (and if 
$\delta\in \cS[\gamma]$ is limit and $h\circ f_\delta$ is an
increasing sequence of conditions from $\bbP$, then 
$q^*_\delta=q_\delta$). Let $\st$ be a winning strategy of
Generic in the game $\Game(r,N,h,\bbP,\bar{f},\bar{q}^*)$
of \cite[Definition 2.1]{RoSh:655}. The same strategy can be used 
by Generic in $\Game^{\cS}_\gamma(r,N,h,\bbP,R^{\rm pr},
\bar{f},\bar{q})$ and easily it is a winning strategy in
this game too.
\end{remark}

\begin{proposition}
\label{n1.2}
Let $\bbQ,N,h,\bar{\cI},R^\pr,\gamma$ be as in Definition \ref{n1.1}. 
\begin{enumerate}
\item Every $(D,\cS)$--diamond is a $(D,\cS,h)$--semi diamond for $\bbQ$
  over $N$.\\ 
Below, let $\bar{f}$ be a $(D,\cS,h)$--semi diamond for $\bbQ$ over $N$. 
\item There exists an $(N,h,\bbQ,R^\pr,\bar{f},\bar{\cI})$--candidate.\\ 
Below, let $\bar{q}$ be an $(N,h,\bbQ,R^\pr,\bar{f},\bar{\cI})$--candidate. 
\item If $\bbQ$ is $({<}\lambda)$--lub--complete\footnote{i.e., increasing
    sequences of length ${<}\lambda$ have least upper bounds}, then for any
  condition $r\in\bbQ$ both players have always legal moves in the game
  $\Game^\cS_\gamma(r,N,h,\bbQ,R^\pr,\bar{f},\bar{q})$ satisfying $r_j^-\leq
  r_j$, provided that for each $i\in \cS[\gamma]\cap \cR\cap j$ conditions
  $r_i^-,r_i$ are compatible. 
\item If $\bbQ$ is $({<}\lambda)$--complete and $r\in\bbQ$ is
  $(N,\bbQ)$--generic in the standard sense, then also both players have
  always legal moves in the game $\Game^\cS_\gamma(r,N,h,\bbQ,R^\pr,\bar{f},
  \bar{q})$ satisfying $r_j^-\leq r_j$, provided that for each $i\in
  \cS[\gamma]\cap \cR\cap j$ conditions $r_i^-,r_i$ are compatible. 
\item If $r\in\bbQ$ is generic for $\bar{q}$ over $N,h,\bbQ, R^\pr,\bar{f},
  \cS,\gamma$, then $r$ is $(N,\bbQ)$--generic in the standard sense. 
\item Consequently, if a forcing notion $\bbQ$ is purely sequentially proper
  over $(D,\cS)$--semi diamonds and there exists a $(D,\cS)$--diamond, then
  $\bbQ$ is  $\lambda$--proper in the standard sense. 
\end{enumerate}
\end{proposition}

\begin{proof}
(5)\qquad Let $\bar{q}$ be an $(N,h,\bbQ,R^\pr,\bar{f},
\bar{\cI})$--candidate and $r\in \bbQ$ be generic for $\bar{q}$ over
$N,h,\bbQ,R^\pr,\bar{f},\cS,\gamma$. Suppose that $\cI\in N$ is an open
dense subset of $\bbQ$, say $\cI=\cI_{j_0}$ (remember, $\bar{\cI}=\langle   
\cI_i:i<\lambda\rangle$ lists all open dense subsets of $\bbQ$ belonging to
$N$). We want to argue that $\cI\cap N$ is predense above $r$, so suppose
$r_0\geq r$.

Consider a play of $\Game^\cS_\gamma(r,N,h,\bbQ,R^\pr,\bar{f},\bar{q})$ in 
which Generic follows her winning strategy and Antigeneric plays as
follows.\\ 
$\bullet$\quad At stage $i=0$, Antigeneric sets $C_0=\lambda$, $r_0^-=
\emptyset_\bbQ$ and $r_0$ is the one fixed above.\\ 
$\bullet$\quad At a stage $i\notin\cS[\gamma]$, $i>0$, Antigeneric first picks
any legal move\footnote{Note that since Generic uses her winning strategy,
there are always legal moves.} $C_i,r_i^-,r_i'$ and then ``corrects'' it by
choosing a condition $r_i\geq r_i'$ so that $r_i\in\bigcap\limits_{j<i}
\cI_j$ (remember, $\bbQ$ is strategically $({<}\lambda)$--complete).       

After the play is completed and a sequence $\langle C_i,r^-_i,r_i:
i<\lambda\rangle$ is constructed, we know that the condition $(\circledast)$
of Definition \ref{n1.1}(3) is satisfied. Also, since $\bar{f}$ is a
$(D,\cS,h)$--semi diamond for $\bbQ$ over $N$, we know that $\{\delta\in\cS:
(\forall\alpha< \delta)(h(f_\delta(\alpha))=r^-_\alpha)\}\in D^+$. Pick a
limit ordinal $\delta\in \cS[\gamma]\cap\mathop{\triangle}\limits_{i<
  \lambda} C_i$ such  that $\delta>j_0$, $\delta$ is a limit of elements of
$\lambda\setminus\cS$ and $h\circ f_\delta=\langle r^-_\alpha:\alpha<
\delta\rangle$. Then by $(\circledast)$ we have that $q_\delta\leq r_\delta$
and $h\circ f_\delta\; R^\pr\; r_\delta$. Moreover, since $r_\alpha\leq
r_\delta$ for all $\alpha<\delta$ and since $\delta$ is a limit of points
from $\lambda\setminus\cS$ we get $r_\delta\in\bigcap\limits_{j<\delta}
\cI_j$.  Therefore (by \ref{n1.1}(2)(c)) $q_\delta\in\bigcap\limits_{j<
  \delta}\cI_j$, so in particular $q_\delta\in \cI_{j_0}\cap N$. But the
condition $r_\delta$ is stronger than $q_\delta$ and it is also stronger
than $r_0$, so $r_0$ is compatible with $q_\delta$.   
\end{proof}

\begin{proposition}
\label{for24}
Assume that a forcing notion $\bbP$ is one of the following types:
\begin{itemize}
\item[(a)] $\bbP=\bbS^\psi_E$ where $\psi, E$ are as in Definition 
  \ref{Silver}(1), and  
\[A\in E\ \Rightarrow (\lambda\setminus \cS)\cup A\in D,\]
\item[(b)] $\bbP=\bbQ^{\ell,\bar{E}}$ where $\bar{E}$ is as in
Definition \ref{defFilter} and $1<\ell\leq 4$, or 
\item[(c)] $\bbP=\bbQ^{3,\bar{E}}_E$ where $\bar{E},E$ are as in 
Definition \ref{defFilter}, and 
\[A\in E\ \Rightarrow (\lambda\setminus \cS)\cup A\in D,\]
\item[(d)] $\bbP=\bbQ^\ell_{\varphi,\bar{F}}$ where
  $\lambda,\varphi,\bar{F}$ are as in Definition \ref{boundedPQ},
  $1<\ell\leq 4$, or  
\item[(e)] $\bbP=\bbQ^3_{\varphi,\bar{F},E}$ where $\bar{E},E$ are as in 
Definition \ref{boundedPQ}, and  
\[A\in E\ \Rightarrow (\lambda\setminus \cS)\cup A\in D,\]
\item[(f)] $\bbP=\bbS^\psi_E$ where $\psi, E$ are as in Definition 
  \ref{Silver}, and  $\lambda\setminus\cS\in E$, or 
\item[(g)] $\bbP=\bbQ^{\ell,\bar{E}}_E$ or $\bbP=\bbQ^\ell_{\varphi,
    \bar{F}, E}$, where $\ell=1$ or $\ell=3$, $\bar{E},E$ are as in 
Definition \ref{defFilter} or $\lambda,\varphi,\bar{F},E$ are as in
Definition \ref{boundedPQ}, respectively, and $\lambda\setminus\cS\in E$, or   
\item[(h)] $\bbP=\bbH_\lambda$ or $\bbP=\bbE_\lambda$ (see Definition
  \ref{Silver}(2,3)), 
\item[(i)] $\bbP$ is strategically $({\leq}\lambda)$--complete and
  $({<}\lambda)$--complete.  
\end{itemize}
Then the forcing notion $\bbP$ is uniformly purely sequentially$^+$ proper
over $(D,\cS)$--semi diamonds.  
\end{proposition}

\begin{proof}
(a)\quad The arguments here are very similar to those for
  $\bbQ^3_{\varphi,\bar{F},E}$ below, but much simpler. We will use the fact
  that the forcing notion $\bbS^\psi_E$ is $({<})$--lub--complete without
  explicitly saying this. Define a binary relation $R^\pr$ by:

$\bar{p}\; R^\pr\; p$\quad if and only if  
\begin{itemize}
\item $\bar{p}=\langle p_\alpha:\alpha<\delta\rangle$ is an increasing
  sequence of conditions from $\bbS^\psi_E$ such that $\delta<\lambda$ is a
  limit ordinal, and 
\item $p\in\bbS^\psi_E$ is an upper bound of the sequence $\bar{p}$
  with  $\rt(p)=\rt(\bigcup\limits_{\alpha<\delta} p_\alpha)$. 
\end{itemize}
Plainly, $R^\pr$ is a $\lambda$--sequential$^+$ purity on
$\bbS^\psi_E$. Now, let $N,h,\bar{f},\bar{\cI},\gamma$ be as in the
assumptions of \ref{n1.3}(1)$(\odot)$ and let $\bar{q}=\langle 
q_\delta: \delta\in\cS \mbox{ is limit}\;\rangle$ be an appropriate
candidate. Suppose $p\in\bbS^\psi_E\cap N$. We will define a condition
$p_\lambda\geq p$ generic for $\bar{q}$. For this, by induction on
$\alpha<\lambda$ we choose conditions $p_\alpha\in \bbS^\psi_E\cap N$ as
follows.      
\begin{enumerate}
\item[$(\blacktriangle)_1$] $p_0=p$ and if $\delta$ is a limit ordinal,
  then $p_\delta=\bigcup\limits_{\xi<\delta} p_\xi$.  
\item[$(\blacktriangle)_2$] Suppose that $\alpha=\delta+1$,
  $\delta\in\cS[\gamma]$ is a limit  ordinal, and $h\circ f_\delta$ is an
  increasing sequence of conditions from $\bbS^\psi_E\cap N$ such that 
$p_\delta\leq \bigcup\limits_{\beta<\delta}h(f_\delta(\beta))$.
Then we let $p_\alpha= \big(p_\delta\rest \alpha\big)\cup \big(q_\delta\rest 
[\alpha,\lambda)\big)$.
\item[$(\blacktriangle)_3$] If $\alpha=\delta+1$ and the clause
$(\blacktriangle)_2$ does not apply, then $p_\alpha=p_\delta$.
\end{enumerate}
Under the assumptions of $(\blacktriangle)_2$ we have $p_\delta\leq
\bigcup\limits_{\beta<\delta} h(f_\delta(\beta))\leq q_\delta$. Therefore,
it should be clear that the construction can be carried out and that the 
resulting sequence $\langle p_\alpha:\alpha<\lambda\rangle$ satisfies the
assumptions of Lemma \ref{fusSil}. Consequently $p_\lambda\stackrel{\rm
  def}{=} \bigcup\limits_{\alpha<\lambda} p_\alpha\in\bbS^\psi_E$. 

Let us show that $p_\lambda$ is generic for $\bar{q}$ over $N,h,
\bbS^\psi_E, R^\pr,\bar{f},\cS,\gamma$. Consider the following
strategy $\st$ of  Generic in the game $\Game^\cS_\gamma(p_\lambda,
N,h,\bbS^\psi_E,R^\pr,\bar{f}, \bar{q})$. At a stage $i\in \cS[\gamma]$,
when a sequence $\langle r_j^-,r_j, C_j:j<i\rangle$ has been already
constructed, Generic picks $r^-_i,r_i,C_i$ so that  
\begin{enumerate}
\item[$(\blacktriangle)_4$] if $i$ is limit, then $r^-_i=\bigcup\limits_{j<i}
r^-_j$, $r_i=\bigcup\limits_{j<i} r_j$ and $C_i=\bigcap\limits_{j<i} C_j$, 
\item[$(\blacktriangle)_5$] if $i$ is a successor, say $i=j+1$, then
  $r^-_i,r_i\in \bbS^\psi_E$ are chosen so that $r_j\leq r_i$,
  $\rt(r_j)<\rt(r_i^-)=\rt(r_i)$, $r^-_j\leq r^-_i$, $p_i\leq r^-_i$ and
  $r^-_i\leq r_i$. Also, $C_i=\lambda\setminus(\dom(r_i)\cap \cS)$.      
\end{enumerate}
Note that at a stage $i=j+1$, since $r^-_j\leq r_j$ and $p_i\leq
p_\lambda\leq r_j$, we know that $r^-_j\cup p_i\in \bbS^\psi_E\cap N$ and
$(r^-_j\cup p_i)\leq r_j$. Therefore Generic may easily pick $r_i^-,r_i$ as
required. Also, by our assumption on $E$, we are sure that $C_i\in D$. 

Suppose now that $\langle r^-_i,r_i,C_i:i<\lambda\rangle$ is a play of
$\Game^\cS_\gamma(T^*,N,h,\bbS^\psi_E,R^\pr,\bar{f},\bar{q})$ in which
Generic  follows the strategy $\st$ described above. Let
$\delta\in\cS[\gamma] \cap \bigcap\limits_{i<\delta} C_i$ be a limit ordinal
such that $h\circ f_\delta$ is an increasing sequence of conditions from
$\bbS^\psi_E$ satisfying  $h(f_\delta(i+1))=r_{i+1}^-$ for all
$i<\delta$. By $(\blacktriangle)_5+(\blacktriangle)_4$ we have that $i \leq  
\rt(h(f_\delta(i)))<\delta$ and $p_i\leq_{\bbS^\psi_E} h(f_\delta(i))$ (for
all successor $i<\delta$) and $\delta\notin \dom\big(
\bigcup\limits_{j<\delta} r_j^-\big)=\dom\big(\bigcup\limits_{j<\delta}
h(f_\delta(j)\big)$. Therefore  $q_\delta\rest [\delta+1,\lambda)\subseteq
p_{\delta+1}\subseteq p_\lambda \subseteq r_\delta$, $q_\delta\rest \delta=
\big(\bigcup\limits_{j<i} h(f_\delta(j)\big) \rest \delta=r_\delta\rest\delta$ and
$\delta=\rt(r_\delta)=\rt(q_\delta)= \rt\big(\bigcup\limits_{j<i}
h(f_\delta(j))\big)$. Consequently,   $q_\delta\leq r_\delta$ and $h\circ
f_\delta\; R^\pr\; r_\delta$.    
\bigskip

\noindent (b,c,d,e)\quad We know that the forcing notion $\bbP$ is
strategically $({<}\lambda)$--complete (see Observation
\ref{easyComplete}). Define a binary relation  $R^\pr$ by:

$\bar{T}\; R^\pr\; T$\quad if and only if  
\begin{itemize}
\item $\bar{T}=\langle T_\alpha:\alpha<\delta\rangle$ is a
  $\leq_{\bbP}$--increasing sequence of conditions from $\bbP$ such that
  $\delta<\lambda$ is a limit ordinal and for $\alpha<\delta$ we have
  $\alpha\leq \lh\big(\mrot(T_\alpha)\big)<\delta$, and 
\item $T\in\bbP$ is a $\leq_{\bbP}$--upper bound of the sequence $\bar{T}$
  with  $\mrot(T)=\mrot(\bigcap\limits_{\alpha<\delta} T_\alpha)$.  (Note
  that the previous bullet implies $\bigcap\limits_{\alpha<\delta}
  T_\alpha\in \bbP$. In cases (b) and (d) we also know that
  $\bigcup\limits_{\alpha<\delta} \mrot(T_\alpha)=
  \mrot(\bigcap\limits_{\alpha<\delta} T_\alpha)$.) 
\end{itemize}
Easily, $R^\pr$ is a $\lambda$--sequential$^+$ purity on $\bbP$. Suppose
that $N\prec (\cH(\chi),\in,<^*_\chi)$ and $h,\bar{f},\bar{\cI},\gamma$ are
as in the assumptions of Condition \ref{n1.3}(1)$(\odot)$. Let
$\bar{q}=\langle q_\delta: \delta\in\cS \mbox{ is limit}\;\rangle$ be an
$(N,h,\bbP,R^\pr, \bar{f}, \bar{\cI})$--candidate and let $T\in\bbP\cap
N$. We want to find a condition $T^*\geq T$ generic for $\bar{q}$ over
$N,h,\bbP, R^\pr, \bar{f},\cS, \gamma$. To this end we choose by induction
on $\alpha< \lambda$ conditions $T_\alpha\in \bbP\cap N$ as follows.
\begin{enumerate}
\item[$(\triangle)_1$] $T_0=T$ and if $\delta$ is a limit ordinal,
  then $T_\delta=\bigcap\limits_{\xi<\delta}T_\xi$.  
\item[$(\triangle)_2$] Suppose that $\alpha=\delta+1$,
  $\delta\in\cS[\gamma]$ is a limit  ordinal, and $h\circ f_\delta$ is a
  $\leq_\bbP$--increasing sequence of conditions from $\bbP\cap N$ such that
  for all successor $\beta<\delta$ we have 
\[\beta\leq\lh\big(\mrot(h(f_\delta(\beta)))\big)<\delta\ \mbox{ and }\
T_\beta\leq h(f_\delta(\beta)).\]   
Let $T_\alpha^*=\big\{t\in T_\delta: \mrot\big(\bigcap_{\beta<\delta}
h(f_\delta(\beta))\big)\trianglelefteq t\ \Rightarrow\ t\in q_\delta\big\}$.  
If $\ell\neq 3$ then $T^*_\alpha\in \bbP$ and we put $T_\alpha=
T^*_\alpha$. Otherwise, we choose $T_\alpha\in\bbP$ such that
$T_\alpha\subseteq T^*_\alpha$ and $T_\alpha\cap {}^\alpha\lambda= 
T^*_\alpha\cap {}^\alpha\lambda$.  
\item[$(\triangle)_3$] If $\alpha=\delta+1$ and the clause
$(\triangle)_2$ does not apply, then $T_\alpha=T_\delta$.
\end{enumerate}
Suppose that $\alpha=\delta+1$ satisfies the assumptions of
$(\triangle)_2$. Then $\bigcap\limits_{\beta<\delta}h(f_\delta(\beta))\in
\bbP$ (remember Observation \ref{easyComplete}) and $T_\delta
\leq\bigcap\limits_{\beta<\delta}h(f_\delta(\beta))\leq q_\delta$ and
$\mrot(q_\delta)= \mrot\big(\bigcap\limits_{\beta<\delta}
h(f_\delta(\beta))\big)\in T_\delta$. Let $t_\delta=\mrot
\big(\bigcap\limits_{\beta<\delta} h(f_\delta(\beta))\big)$. If
$\lh(t_\delta)=\delta$, then also $t_\delta = \bigcup\limits_{\beta<\delta}
\mrot\big(h(f_\delta(\beta))\big)$ and $\suc_{T_{\delta+1}}(t_\delta)=
\suc_{q_\delta}(t_\delta)$. (Note that in cases (b) and (d) this must
happen.) If $\lh(t_\delta)>\delta$, then $\bigcup\limits_{\beta<\delta}
\mrot\big(h(f_\delta(\beta))\big)= t_\delta\rest \delta\vtl t_\delta$ and 
$\suc_{T_{\delta+1}}(t_\delta\rest \delta)=\suc_{T_\delta} (t_\delta\rest
\delta)$. (This may happen only in cases (c) and (e).)   

Therefore we may conclude that the sequence
$\langle T_\alpha:\alpha<\lambda\rangle$ satisfies the assumptions of Lemma 
\ref{pre2.5}. At limit stages $\delta<\lambda$ this implies that
$T_\delta= \bigcap\limits_{\beta <\delta} T_\beta\in \bbP$ (so the
construction can be carried out) and for $\lambda$ we get
$T^*\stackrel{\rm def}{=} \bigcap\limits_{\alpha<\lambda} T_\alpha\in\bbP$.

\begin{claim}
\label{cl7}
$T^*$ is $(N,\bbP)$--generic in the standard sense.  
\end{claim}

\begin{proof}[Proof of the Claim]
Suppose towards contradiction that there are $\xi_0<\lambda$ and a condition 
$T^+\geq T^*$ such that $T^+$ is incompatible with every member of
$\cI_{\xi_0}\cap N$. Construct inductively a sequence $\langle S_\alpha:
\alpha<\lambda\rangle$ of conditions in $\bbP$ such that 
\begin{itemize}
\item $T^+\leq S_\beta\leq S_\alpha$, $\lh(\mrot(S_\beta))< \lh(\mrot(
  S_\alpha))$ for $\beta<\alpha<\lambda$, and 
\item $S_{\alpha+1}\in\bigcap\limits_{\xi\leq\alpha} \cI_\xi$ and if
  $\alpha$ is limit, then $S_\alpha=\bigcap\limits_{\beta<\alpha} S_\beta$
  (for all $\alpha<\lambda$).  
\end{itemize}
Let $t_\alpha=\mrot(S_\alpha)$. Since $S_\alpha\subseteq T^+\subseteq
T^*\subseteq T_\alpha$ we see that $t_\alpha\in T_\alpha$ and
$(T_\alpha)_{t_\alpha} \leq S_\alpha$. Now we claim that 
\begin{enumerate}
\item[$(\heartsuit)$] there is a limit ordinal $\delta\in\cS[\gamma]$ such
  that  $\xi_0<\delta$ and   
\[h\circ f_\delta=\langle (T_\alpha)_{t_\alpha}:\alpha<\delta\rangle,\quad 
t_\delta=\bigcup_{\alpha<\delta} t_\alpha,\quad \lh(t_\delta)= 
\delta\ \mbox{ and }\ S_\delta\in \bigcap_{\alpha<\delta}\cI_\alpha.\]
\end{enumerate}
This should be clear in cases (b) and (d) (as closed unbounded subsets of
$\lambda$ belong to $D$). In cases (c) and (e) we note that  
\[A:=\{\delta\in \cS[\gamma]: \delta \mbox{ limit and } h\circ
f_\delta=\langle (T_\alpha)_{t_\alpha}:\alpha<\delta\rangle\}\in D^+.\]
Letting $\eta=\bigcup\limits_{\alpha<\lambda} t_\alpha$ we have
$\eta\in\lim_\lambda(S_\alpha)$, so 
\[A_\alpha:=\{\delta<\lambda:|\suc_{S_\alpha} (\eta\rest \delta)|>1\}\in E\]
(for each $\alpha<\lambda$). By our additional assumptions on $E$ we
conclude that  
\[A\cap \mathop{\triangle}\limits_{\alpha<\lambda} A_\alpha \cap \{\delta\in 
\cS[\gamma]\setminus (\xi_0+1): (\forall\alpha<\delta)(\lh(t_\alpha)
<\delta)\}\neq \emptyset.\]
Then any $\delta$ from the set above will witness $(\heartsuit)$.  
\medskip

So let $\delta$ be as given by  $(\heartsuit)$.  Then $h\circ f_\delta\;
R^\pr\; S_\delta$ so by \ref{n1.1}(2) we conclude $q_\delta\in
\bigcap\limits_{ \alpha<\delta}\cI_\alpha$ (in particular
$q_\delta\in\cI_{\xi_0}\cap N$). But $q_\delta\leq (T_{\delta+1})_{t_\delta}  
\leq (T^*)_{t_\delta}\leq (T^+)_{t_\delta}$, contradicting the choice of
$T^+$.   
\end{proof}

Now we are ready to argue that $T^*$ is generic for $\bar{q}$ over $N,h,
\bbP, R^\pr,\bar{f},\cS,\gamma$ and for this let us consider the following
strategy $\st$ of  Generic in the game
$\Game^\cS_\gamma(T^*,N,h,\bbP,R^\pr,\bar{f}, 
\bar{q})$. At a stage $i\in \cS[\gamma]$, when a sequence $\langle
r_j^-,r_j, C_j:j<i\rangle$ has been already constructed, Generic picks
$r^-_i,r_i,C_i$ so that  
\begin{enumerate}
\item[$(\triangle)_4$] if $i$ is limit, then $r^-_i=\bigcap\limits_{j<i}
r^-_j$, $r_i=\bigcap\limits_{j<i} r_j$ and $C_i=\bigcap\limits_{j<i} C_j$, 
\item[$(\triangle)_5$] if $i$ is a successor, say $i=j+1$, then $r^-_i,r_i$  
are chosen so that $r_j\leq r_i$, $\mrot(r_j)\vtl\mrot(r_i^-)=\mrot(r_i)$,
$r^-_j\leq r^-_i$, $T_i\leq r^-_i$ and $r^-_i\leq r_i$. Also, $C_i=
[\lh(\mrot(r_i))+1001,\lambda)$ in cases (b), (d) and $C_i=(\lambda\setminus 
\cS)\cup\{\alpha<\lambda: (\exists t\in r_i)(\lh(t)=\alpha\ \wedge\
|\suc_{r_i}(t)|>1)\}$ in cases (c), (e).      
\end{enumerate}
Why are the choices possible? By $(\triangle)_5$ at previous stages, for
each limit $i<\lambda$ the intersection of conditions played so far is in
$\bbP$ (remember Observation \ref{easyComplete}). Note also that then
$r^-_i\leq r_i$. At a stage $i=j+1$, since $r^-_j\leq r_j$, $T_i\leq T^*\leq
r_j$ and $T^*$ is $(N,\bbP)$--generic, we may find $r^i\in \bbP\cap N$
stronger than both $r^-_j$ and $T_i$ and compatible with $r_j$. Then we may
choose $r_i^*$ stronger than both $r^i$ and $r_j$. Pick $t\in r_i^*$ such
that $|\suc_{r_i^*} (t)|>1$ and $\lh(t)>\lh(\mrot(r^i))$ and set $r^-_i=
(r^i)_t$ and $r_i=(r_i^*)_t$.

Note also that there are always legal moves for Antigeneric (use
Observation \ref{easyComplete} and the choices in $(\triangle)_5$).  

Now suppose that $\langle r^-_i,r_i,C_i:i<\lambda\rangle$ is a play of
$\Game^\cS_\gamma(T^*,N,h,\bbP,R^\pr,\bar{f},\bar{q})$ in which Generic 
follows the strategy $\st$ described above. Let $\delta\in\cS[\gamma] \cap   
\bigcap\limits_{i<\delta} C_i$ be a limit ordinal such that $h\circ
f_\delta$ is an increasing sequence of conditions in $\bbP$ satisfying 
\[h(f_\delta(i))=r_i^-\quad\mbox{ for all successor }i<\delta.\]
Then our choices in $(\triangle)_5+(\triangle)_4$ imply that $i \leq
\lh\big(\mrot(h(f_\delta(i)))\big)<\delta$ and $h(f_\delta(i))\geq T_i$
(for all successor $i<\delta$). Let $t=\bigcup\limits_{i<\delta}
\mrot(r^-_i)$. Since $\lh(t)=\delta\in C_i$ for all $i<\delta$, we know that
$|\suc_{r_i}(t)|>1$ for each $i<\delta$ and therefore
$t=\mrot\big(\bigcap\limits_{i<\delta} r_i\big)= \mrot
\big(\bigcap\limits_{i<\delta} r^-_i\big)$. Now using
$(\triangle)_2+(\triangle)_4$ we conclude $q_\delta=(T_{\delta+1})_t \leq
(T^*)_t\leq (r_\delta)_t=r_\delta$ and $h\circ f_\delta\; R^\pr\; r_\delta$.    
\medskip

(f)\quad Like in the proof of (a) above we note that $\bbS^\psi_E$ is
$({<}\lambda)$--lub--complete and we will use this fact several times
(without saying this). We define a binary relation $R^\pr$ by:

$\bar{p}\; R^\pr\; p$\quad if and only if  
\begin{itemize}
\item $\bar{p}=\langle p_\alpha:\alpha<\delta\rangle\subseteq \bbS^\psi_E$
  is an increasing sequence of conditions such that $\delta<\lambda$ is a
  limit ordinal, and 
\item $p\in\bbS^\psi_E$ is an upper bound of the sequence $\bar{p}$. 
\end{itemize}
Then $R^\pr$ is a $\lambda$--sequential$^+$ purity on $\bbS^\psi_E$. Let 
$N,h,\bar{f},\bar{\cI},\gamma$ be as in the assumptions of Condition
\ref{n1.3}(1)$(\odot)$ and $\bar{q}$ be a candidate. Suppose
$p\in\bbS^\psi_E\cap N$. Choose inductively conditions $p_\alpha\in
\bbS^\psi_E\cap N$ (for  $\alpha<\lambda$) so that:
\begin{enumerate}
\item[$(\blacktriangledown)_1$] $p_0\geq p$ satisfies $\cS\subseteq\dom(p_0)$
  and if $\delta$ is limit, then $p_\delta=\bigcup\limits_{\xi<\delta} p_\xi$.    
\item[$(\blacktriangledown)_2$] If $\alpha=\delta+1$, $\delta\in\cS[\gamma]$
  is a limit  ordinal, and $h\circ f_\delta$ is an increasing sequence of
  conditions from $\bbS^\psi_E\cap N$ such that $p_\delta\leq
  \bigcup\limits_{\beta<\delta} h(f_\delta(\beta))$,  then $p_\alpha=
  \big(p_\delta\rest \alpha\big)\cup \big(q_\delta\rest
  [\alpha,\lambda)\big)$.     
\item[$(\blacktriangledown)_3$] If $\alpha=\delta+1$ and the clause 
$(\blacktriangledown)_2$ does not apply, then $p_\alpha=p_\delta$.
\end{enumerate}
It should be clear that the construction can be completed. We note that if
$\alpha=\delta+1$ satisfies the assumptions of $(\blacktriangledown)_2$,
then (by the choice of $p_0$) $\delta\in \dom(p_\delta)$. Clearly the
sequence $\langle p_\alpha:\alpha<\lambda\rangle$ satisfies the assumptions
of Lemma \ref{fusSil} and hence $p_\lambda\stackrel{\rm def}{=}
\bigcup\limits_{\alpha<\lambda} p_\alpha\in\bbS^\psi_E$.  

We claim that $p_\lambda$ is generic for $\bar{q}$. To see this, consider
the following strategy $\st$ for  Generic in the game
$\Game^\cS_\gamma(T^*,N,h,\bbS^\psi_E,R^\pr,\bar{f}, \bar{q})$. At a stage
$i\in \cS[\gamma]$, when a sequence $\langle r_j^-,r_j, C_j:j<i\rangle$ has
been already constructed, Generic picks $r^-_i,r_i,C_i$ so that   
\begin{enumerate}
\item[$(\blacktriangledown)_4$] if $i$ is limit, then $r^-_i=\bigcup\limits_{j<i}
r^-_j$, $r_i=\bigcup\limits_{j<i} r_j$ and $C_i=\bigcap\limits_{j<i} C_j$, 
\item[$(\blacktriangledown)_5$] if $i$ is a successor, say $i=j+1$, then
  $r^-_i,r_i$ are chosen so that $r_j\leq r_i$, $\rt(r_j)<\rt(r_i^-)$,
  $r^-_j\leq r^-_i$, and $p_i\leq r^-_i\leq r_i$. Also, $C_i=
  [\rt(r_i)+1001,\lambda)$.
\end{enumerate}
Suppose now that $\langle r^-_i,r_i,C_i:i<\lambda\rangle$ is a play of
$\Game^\cS_\gamma(T^*,N,h,\bbS^\psi_E,R^\pr,\bar{f},\bar{q})$ in which
Generic follows the strategy $\st$. Let $\delta\in\cS[\gamma] \cap   
\bigcap\limits_{i<\delta} C_i$ be a limit ordinal such that $h\circ
f_\delta$ is an increasing sequence of conditions in $\bbS^\psi_E$
satisfying  $h(f_\delta(i+1))=r_{i+1}^-$ for all $i<\delta$. Then, by the
description of $\st$, we have that $i \leq \rt(h(f_\delta(i)))<\delta$ and
$p_i\leq h(f_\delta(i))$ (for all successor $i<\delta$). Therefore 
$q_\delta\rest [\delta+1,\lambda)\subseteq p_{\delta+1}\subseteq p_\lambda
\subseteq r_\delta$ and $q_\delta\rest (\delta+1)=
r_\delta\rest(\delta+1)$. Consequently,   $q_\delta\leq r_\delta$ and
clearly $h\circ f_\delta\; R^\pr\; r_\delta$.     
\bigskip

\noindent (g)\quad The forcing notion $\bbP$ is strategically
$({<}\lambda)$--complete by Observation \ref{easyComplete}. Define a binary
relation $R^\pr$ by: 

$\bar{T}\; R^\pr\; T$\quad if and only if  

\begin{itemize}
\item $\bar{T}=\langle T_\alpha:\alpha<\delta\rangle$ is a
  $\leq_{\bbP}$--increasing sequence of conditions from $\bbP$ such that
  $\delta<\lambda$ is a limit ordinal and for $\alpha<\delta$ we have
  $\alpha\leq \lh\big(\mrot(T_\alpha)\big)<\delta$, and 
\item $T\in\bbP$ is a $\leq_{\bbP}$--upper bound of the sequence $\bar{T}$. 
 (Note that the previous bullet implies $\bigcap\limits_{\alpha<\delta}
  T_\alpha\in \bbP$ is the least upper bound of $\bar{T}$.)
\end{itemize}
Clearly $R^\pr$ is a $\lambda$--sequential$^+$ purity on $\bbP$. Let
$N,h,\bar{f},\bar{\cI},\gamma$ be as in the assumptions of Condition
\ref{n1.3}(1)$(\odot)$ and $\bar{q}=\langle q_\delta: \delta\in\cS \mbox{ is
  limit}\;\rangle$ be a candidate. Suppose $T\in\bbP\cap N$. 
To construct a condition $T^*\geq T$ generic for $\bar{q}$ we choose by
induction on $\alpha<\lambda$ conditions $T_\alpha\in \bbP\cap N$ as
follows.     
\begin{enumerate}
\item[$(\nabla)_1$] $T_0\geq T$ satisfies 
\[(\forall t\in T_0)(|\suc_{T_0}(t)|>1\ \Rightarrow \ \lh(t)\notin \cS),\]
and if $\delta$ is a limit ordinal, then $T_\delta=\bigcap\limits_{\xi<\delta}T_\xi$.  
\item[$(\nabla)_2$] Suppose that $\alpha=\delta+1$,
  $\delta\in\cS[\gamma]$ is a limit  ordinal, and $h\circ f_\delta$ is a
  $\leq_\bbP$--increasing sequence of conditions from $\bbP\cap N$ such that
  for all successor $\beta<\delta$ we have 
\[\beta\leq\lh\big(\mrot(h(f_\delta(\beta)))\big)<\delta\ \mbox{ and }\
T_\beta\leq h(f_\delta(\beta)).\]   
Let $t_\delta = \bigcup\limits_{\beta<\delta}
\mrot\big(h(f_\delta(\beta))\big)$  and $T_\alpha^*=\big\{t\in T_\delta:
t_\delta\trianglelefteq t\ \Rightarrow\ t\in q_\delta\big\}$.   
Pick $T_\alpha\in\bbP$ so that $T_\alpha\subseteq T^*_\alpha$ and
$T_\alpha\cap {}^\alpha\lambda= T^*_\alpha\cap {}^\alpha \lambda$.  
\item[$(\nabla)_3$] If $\alpha=\delta+1$ and the clause
$(\nabla)_2$ does not apply, then $T_\alpha=T_\delta$.
\end{enumerate}
Suppose that $\alpha=\delta+1$ satisfies the assumptions of
$(\nabla)_2$. Then $\bigcap\limits_{\beta<\delta}h(f_\delta(\beta))\in
\bbP$ and $T_\delta \leq\bigcap\limits_{\beta<\delta}h(f_\delta(\beta))\leq
q_\delta$ and $t_\delta\trianglelefteq \mrot \big(
\bigcap\limits_{\beta<\delta} h(f_\delta(\beta))\big)\trianglelefteq
\mrot(q_\delta)$. Also  $\lh(t_\delta)=\delta \in \cS$ so, by demand
$(\nabla)_1$, $|\suc_{T_\delta}(t_\delta)|=1$. Hence we may conclude that
the sequence $\langle T_\alpha:\alpha<\lambda\rangle$ satisfies the
assumptions of Lemma \ref{pre2.5}. At limit stages $\delta<\lambda$ this
implies that $T_\delta= \bigcap\limits_{\beta <\delta} T_\beta\in \bbP$ (so
the construction can be carried out) and for $\lambda$ we get
$T^*\stackrel{\rm def}{=} \bigcap\limits_{\alpha<\lambda} T_\alpha\in\bbP$. 

\begin{claim}
\label{cl4}
$T^*$ is $(N,\bbP)$--generic in the standard sense.  
\end{claim}

\begin{proof}[Proof of the Claim]
Assume that $\xi_0<\lambda$ and $T^+\geq_{\bbP} T^*$. Build inductively a
sequence $\langle S_\alpha:\alpha<\lambda\rangle$ of conditions in $\bbP$
such that  
\begin{itemize}
\item $T^+\leq S_\beta\leq S_\alpha$, $\lh(\mrot(S_\beta))< \lh(\mrot(
  S_\alpha))$ for $\beta<\alpha<\lambda$, and 
\item $S_{\alpha+1}\in\bigcap\limits_{\xi\leq\alpha} \cI_\xi$, and if
  $\alpha$ is limit then $S_\alpha=\bigcap\limits_{\beta<\alpha} S_\beta$
  (for all $\alpha<\lambda$).  
\end{itemize}
Let $s_\alpha=\mrot(S_\alpha)$. Since $T_\alpha\supseteq T^+\supseteq
S_\alpha$ we see that $s_\alpha\in T_\alpha$ and $(T_\alpha)_{s_\alpha}\leq
S_\alpha$. For some limit ordinal $\delta\in\cS[\gamma]$ we have
$\xi_0<\delta$ and $\lh(s_\alpha)<\delta$ for $\alpha<\delta$ and $h\circ
f_\delta=\langle (T_\alpha)_{s_\alpha}:\alpha<\delta\rangle$. Then $h\circ
f_\delta\; R^\pr\; S_\delta$ and $S_\delta\in\bigcap\limits_{\alpha<\delta}
\cI_\alpha$. Hence also $q_\delta\in \bigcap\limits_{\alpha<\delta}
\cI_\alpha$ (in particular $q_\delta\in\cI_{\xi_0}\cap N$). Since for
$t_\delta= \bigcup\limits_{\alpha <\delta} s_\alpha$ we have $q_\delta\leq
(T_{\delta+1})_{t_\delta} \leq (T^*)_{t_\delta}\leq (T^+)_{t_\delta}$ we may
conclude $\cI_{\xi_0}\cap N$ is predense above $T^*$. 
\end{proof}

Let us show that $T^*$ is generic for $\bar{q}$ over $N,h,
\bbP, R^\pr,\bar{f},\cS,\gamma$. Consider the following
strategy $\st$ for  Generic in the game
$\Game^\cS_\gamma(T^*,N,h,\bbP,R^\pr,\bar{f}, 
\bar{q})$. At a stage $i\in \cS[\gamma]$, when a sequence $\langle
r_j^-,r_j, C_j:j<i\rangle$ has been already constructed, Generic picks
$r^-_i,r_i,C_i$ so that  
\begin{enumerate}
\item[$(\nabla)_4$] if $i$ is limit, then $r^-_i=\bigcap\limits_{j<i}
r^-_j$, $r_i=\bigcap\limits_{j<i} r_j$ and $C_i=\bigcap\limits_{j<i} C_j$, 
\item[$(\nabla)_5$] if $i$ is a successor, say $i=j+1$, then $r^-_i,r_i$  
are chosen so that $r_j\leq r_i$, $\mrot(r_j)\vtl\mrot(r_i^-)$, $r^-_j\leq
r^-_i$, $T_i\leq r^-_i$ and $r^-_i\leq r_i$. Also, $C_i=
[\lh(\mrot(r_i))+1001,\lambda)$.       
\end{enumerate}
As in the previous part one uses Claim \ref{cl4} to show that the choices
are indeed possible and that there are always legal moves for Antigeneric.

Suppose now that $\langle r^-_i,r_i,C_i:i<\lambda\rangle$ is a play of
$\Game^\cS_\gamma(T^*,N,h,\bbP,R^\pr,\bar{f},\bar{q})$ in which Generic 
follows the strategy $\st$ described above. Let $\delta\in\cS[\gamma] \cap   
\bigcap\limits_{i<\delta} C_i$ be a limit ordinal such that $h\circ
f_\delta$ is an increasing sequence of conditions in $\bbP$ satisfying 
\[h(f_\delta(i))=r_i^-\quad\mbox{ for all successor }i<\delta.\]
Then, by $(\nabla)_5+(\nabla)_4$, we have that $i \leq
\lh\big(\mrot(h(f_\delta(i)))\big)<\delta$ and $h(f_\delta(i))\geq T_i$
(for all successor $i<\delta$). Therefore, letting
$t_\delta=\bigcup\limits_{i<\delta}\mrot(r^-_i)$ and using $(\nabla)_2$ we
conclude $q_\delta\leq (T_{\delta+1})_{t_\delta} \leq (T^*)_{t_\delta}\leq
(r_\delta)_{t_\delta} =r_\delta$ and $h\circ f_\delta\; R^\pr\; r_\delta$.   
\bigskip

\noindent (h)\quad In both cases the arguments are almost identical, so we
will consider $\bbP=\bbH_\lambda$ only. Define a binary relation $R^\pr$
by:\\ 
$\bar{p}\; R^\pr\; p$ \quad if and only if 
\begin{itemize}
\item $\bar{p}=\langle (s_\alpha,g_\alpha):\alpha< \delta\rangle$ is a
  $\leq_{\bbH_\lambda}$--increasing sequence of conditions from
  $\bbH_\lambda$ of limit length $\delta<\lambda$, and 
\item $p=(s,g)\in\bbH_\lambda$  is an upper bound to $\bar{p}$ with
  $s=\bigcup\limits_{\alpha<\delta} s_\alpha$.   
\end{itemize}
Clearly, $R^\pr$ is a $\lambda^+$--sequential purity on
$\bbH_\lambda$. Suppose $N,h,\bar{f},\bar{\cI},\gamma$ are as in 
\ref{n1.3}$(\odot)$ and let $\bar{q}$ be a candidate for these
parameters. We claim that every condition from $\bbH_\lambda\cap N$ is
generic for $\bar{q}$. So let $(s,g)\in \bbH_\lambda\cap N$. Consider the
following strategy $\st$ for Generic in $\Game^\cS_\gamma((s,g),N,h,
\bbH_\lambda,R^{\rm pr}, \bar{f},\bar{q})$. At a stage $i\in \cS[\gamma]$ of
the play, after a sequence $\langle (s_j^-,g_j^-),(s_j,g_j),C_j: j<i\rangle$
was already constructed, Generic is instructed to choose $(s_i^-,g_i^-),
(s_i,g_i)$ and $C_i$ as follows.
\begin{enumerate}
\item[$(\bullet)_1$] If $i=i_0+1$ is a successor, then $g_i^-=g_{i_0}^-$,
  $g_i=g_{i_0}$, $s_i^-=s_i=s_{i_0}\conc \langle
  g_{i_0}(\lh(s_{i_0}))\rangle$ and $C_i=[\lh(s_{i_0})+1001,\lambda)$.
\item[$(\bullet)_2$] If $i\in \cS[\gamma]$ is limit and
  $h(f(j))=(s_j^-,g_j^-)$ for each successor $j<i$, and $q_i=(s^*,g^*)$
  (note that necessarily $s^*=\bigcup\limits_{j<i} s_j^-$), then
  $s_i^-=s_i=s^*$, $C_i=\lambda$ and for $\alpha<\lambda$:
\[g_i(\alpha)=\sup\{g_j(\alpha),g^*(\alpha):j<i\}\ \mbox{ and } \ 
g_i^-(\alpha)=\sup\{g_j^-(\alpha),g^*(\alpha):j<i\}.\] 
\item[$(\bullet)_3$] If $i\in\cS[\gamma]$ is limit but we are not in the
  previous case, then $s_i^-=s_i=\bigcup\limits_{j<i} s^-_j$,
  $g_i(\alpha)=\sup\{g_j(\alpha):j<i\}$,
  $g_i^-(\alpha)=\sup\{g_j^-(\alpha):j<i\}$ for each $\alpha<\lambda$.  
\end{enumerate}
Now check. 
\bigskip

\noindent (i)\quad Define a binary relation $R^\pr$ by:\\
$\bar{p}\; R^{\rm pr}\; p$ \quad if and only if\quad $\bar{p}=\langle
p_\alpha:\alpha< \delta\rangle$ is a $\leq_\bbP$--increasing sequence
of conditions from $\bbP$ which has an upper bound in $\bbP$,
$\delta<\lambda$ is a limit ordinal and $p\in \bbP$ is an upper bound
to $\bar{p}$.  

Plainly, $R^{\rm pr}$ is a $\lambda$--sequential$^+$ purity on
$\bbP$. (Remember $\bbP$ is $({<}\lambda)$--complete.)
Assume $N,h,\bar{f},\bar{\cI},\gamma,\bar{q}$ are as in
\ref{n1.3}(1)$(\odot)$ and let $p\in N$. Let $\st^*\in N$ be a winning
strategy of Complete in the game $\Game^{\lambda+1}_0(\bbP)$. By induction
on $\alpha\leq\lambda$ choose conditions $p^*_\alpha,q^*_\alpha\in \bbP$ so
that

\begin{enumerate}
\item[$(\square)_1$] $\langle  p^*_\alpha,q^*_\alpha:\alpha\leq\lambda
  \rangle$ is a legal play of $\Game^{\lambda+1}_0(\bbP)$ in which Complete
  follows her winning strategy $\st^*$,
\item[$(\square)_2$] $p^*_0=p$, $p^*_\alpha,q^*_\alpha\in N$ for all
  $\alpha<\lambda$, 
\item[$(\square)_3$] $p^*_{\alpha+1}\in\bigcap\limits_{\beta<\alpha}
  \cI_\beta$ for each $\alpha<\lambda$, 
\item[$(\square)_4$] if $\delta\in\cS$ is limit and $q_\delta\geq p^*_\alpha$
  for all $\alpha<\delta$, then $p^*_\delta=q_\delta$.
\end{enumerate}
(Clearly the construction is possible.) We will argue that the condition
$p^*_\lambda$ is generic for the candidate $\bar{q}$ over $N,h,\bbP,R^{\rm
  pr}, \bar{f},\cS,\gamma$. Consider a strategy $\st^+$ of Generic in
$\Game^\cS_\gamma(p^*_\lambda,N,h,\bbP,R^{\rm pr},\bar{f},\bar{q})$ which
instructs her to choose $r^-_i,r_i, C_i$ (for $i\in \cS[\gamma]$) and side
ordinals $\alpha_i<\lambda$ (for $i<\lambda$) so that the following demands
$(\square)_5$--$(\square)_9$ are satisfied.
\begin{enumerate}
\item[$(\square)_5$] $r^-_i\in\bbP\cap N$ and $r_i\in\bbP$. 
\item[$(\square)_6$] $r_i^-\leq r_i$ and if $j<i$ then $r_j\leq  r_i$ and
  $\gamma<\alpha_j<\alpha_i$.   
\item[$(\square)_7$] $\alpha_i<\lambda$ is the smallest ordinal above
  $\{\gamma+1\}\cup \{\alpha_j+1:j<i\}$ such that $r^-_i\leq p_{\alpha_i}^*$ 
  and then $r^-_{i+1}=p^*_{\alpha_i}$, $r_{i+1}=r_i$ and
  $C_{i+1}=[\alpha_i+1001,\lambda)$. 
\item[$(\square)_8$] If $\delta\in\cS[\gamma]$ is limit and $q_\delta\geq
  r^-_i$ for all $i<\delta$, and the family
  $\{q_\delta\}\cup\{r_i:i<\delta\}$ has an upper bound in $\bbP$, then
  $r_\delta^-=q_\delta$ and $r_\delta\in\bbP$ is the $<^*_\chi$--first
  condition stronger than $q_\delta$ and stronger than all $r_i$ for
  $i<\delta$, and $C_i=\lambda$.
\item[$(\square)_9$] If $\delta\in \cS[\gamma]$ is limit but we are not in
  the case of $(\square)_8$, then $(r_\delta^-,r_\delta,C_\delta)$ is the
  $<^*_\chi$--first legal move of Generic after $\langle r^-_i,r_i,C_i:
  i<\delta\rangle$ which satisfies also $r_\delta^-\leq r_\delta$. 
\end{enumerate}
The above conditions fully define a strategy $\st^+$ for Generic. Concerning
clause $(\square)_7$ and the choice of $\alpha_i$, note that by
$(\square)_3$ there is an $\alpha<\lambda$ such that either the conditions
$p_\alpha^*,r^-_i$ are incompatible or $p^*_\alpha\geq 
r^-_i$. Since $r_i\geq p_\lambda^*\geq p_\alpha^*$ and $r_i\geq r^-_i$,
the conditions $p^*_\alpha,r^-_i$ are always compatible, and hence there is 
$\alpha_i<\lambda$ such that $p^*_{\alpha_i}\geq r^-_i$. Note that the
condition $p^*_\lambda$ is $(N,\bbP)$--generic in the standard sense (by
$(\square)_3$) and also for each $j<\lambda$ there is an upper bound to
$\{r_i:i<j\}$ (and this upper bound is $(N,\bbP)$--generic).  Hence,
for both players, there are always legal moves during a play in which
Generic followed $\st^+$ so far (satisfying $r_i^-\leq r_i$). So, in
particular, clause $(\square)_9$ completes the description of $\st^+$. 

Let us argue that $\st^+$ is a winning strategy for Generic. To this end,
assume $\langle r^-_i,r_i,C_i: i<\lambda\rangle$ is a play in which
Generic follows $\st^+$ and let $\langle\alpha_i:i<\lambda\rangle$ be
the side ordinals chosen by her. Suppose $\delta\in\cS[\gamma] \cap
\bigcap\limits_{i<\delta} C_i$ is a limit ordinal such that $h\circ
f_\delta$ is an increasing sequence of conditions from $\bbP\cap N$
satisfying 
\[h\big(f_\delta(i+1)\big)=r^-_{i+1}\quad \mbox{ for all }i<\delta.\] 
Then also $i\leq \alpha_i<\delta$ for each $i<\delta$ so by $(\square)_7$
we have
\[(\forall i<\delta)(\exists\alpha<\delta)(r^-_i\leq p^*_\alpha)\quad \mbox{
  and }\quad (\forall \alpha<\delta)(\exists i<\delta)(p^*_\alpha\leq
r^-_i).\] 
Thus the sequences $\langle p^*_\alpha:\alpha<\delta\rangle$ and $\langle
r^-_i:i<\delta\rangle$ and also $h\circ f_\delta$ have the same upper
bounds. Now, by \ref{n1.1}(2)(a,b), we have $q_\delta\geq h\circ
f_\delta(i+1)=r^-_{i+1}$  for all $i<\delta$, and hence $q_\delta$ is
stronger than all $p^*_\alpha$ for $\alpha<\delta$. By $(\square)_4$ we get
$q_\delta=p_\delta^* \leq  p^*_\lambda\leq r_i$ (for $i<\delta$). Thus
clause $(\square)_8$ applies and $r^-_\delta=q_\delta\leq r_\delta$ so also
$h\circ f_\delta\; R^{\rm pr}\; r_\delta$.  
\end{proof}

\section{The iteration theorem}
Here we will show the main result: pure sequential$^+$ properness is almost
preserved in $\lambda$--support iterations. The proof of the theorem is
somewhat similar to that of \cite[Theorem 3.7]{RoSh:655}, but there are
major differences in the involved concepts.

\begin{theorem}
\label{n3.7} 
Assume Context \ref{context}. Let $\bar{\bbQ}=\langle\bbP_\alpha,
\name{\bbQ}_\alpha:\alpha<\zeta^*\rangle$ be a $\lambda$--support iteration
such that for each $\alpha<\zeta^*$   
\[\forces_{\bbP_\alpha}\mbox{`` $\name{\bbQ}_\alpha$ is purely
  sequentially$^+$ proper over $(D,\cS)$--semi diamonds ''.}\] 
Then 
\begin{enumerate}
\item $\bbP_{\zeta^*}=\lim(\bar{\bbQ})$ is purely sequentially proper over
$(D,\cS)$--semi diamonds.   
\item If, additionally, for each $\alpha<\zeta^*$   
\[\forces_{\bbP_\alpha}\mbox{`` $\name{\bbQ}_\alpha$ is
  $({<}\lambda)$--complete ''}\]  
then $\bbP_{\zeta^*}$ is purely sequentially$^+$ proper over $(D,\cS)$--semi 
diamonds.    
\end{enumerate}
\end{theorem}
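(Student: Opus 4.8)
The plan is to proceed by induction on $\zeta^*$, building at each stage a $\lambda$--sequential purity $R^{\rm pr}$ on $\bbP_{\zeta^*}$ out of the coordinate purities. Concretely, fix (the $<^*_\chi$--first names for) the purities $\name{R}^{\rm pr}_\xi$ witnessing that $\name{\bbQ}_\xi$ is purely sequentially$^+$ proper, applied in $\bV^{\bbP_\xi}$ to the image of $N$, and declare $\bar r\; R^{\rm pr}\; r$ to hold exactly when $\bar r=\langle r_\alpha:\alpha<\delta\rangle$ is $\leq$--increasing of limit length $\delta<\lambda$, $r$ is an upper bound, and for every $\xi<\zeta^*$ the condition $r\rest\xi$ forces that $\langle r_\alpha(\xi):\alpha<\delta\rangle$ stands to $r(\xi)$ in $\name{R}^{\rm pr}_\xi$ whenever that coordinate sequence lies in $\dom(\name{R}^{\rm pr}_\xi)$ (with a canonical bound read off $\name{\st}^0_\xi$ otherwise). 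One checks readily that $R^{\rm pr}$ is a $\lambda$--sequential purity in the sense of Definition \ref{n1.0}(a,b); the amalgamation clause Definition \ref{n1.0}(c), needed for part (2), is deferred to the last paragraph.

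The successor and high--cofinality limit cases are comparatively soft. For $\zeta^*=\xi+1$ one uses $\bbP_{\xi+1}\cong\bbP_\xi*\name{\bbQ}_\xi$: the inductive hypothesis supplies a condition generic for the projected candidate over $N$ at stage $\xi$, and above it one runs, in $\bV^{\bbP_\xi}$, the strategy witnessing pure sequential$^+$ properness of $\name{\bbQ}_\xi$ for the residual candidate, the map $h$ being used to split the global semi--diamond $\bar f$ into its $\bbP_\xi$-- and $\name{\bbQ}_\xi$--parts (restrict to $\xi$, evaluate at $\xi$). For a limit $\zeta^*$ with $\cf(\zeta^*)=\lambda$, every condition and every candidate entry has support bounded in $N\cap\zeta^*$, so the relevant data reflect to a stage $\xi<\zeta^*$ with $\xi\in N$ and the inductive hypothesis applies directly.

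The real work is the limit case $\cf(\zeta^*)<\lambda$, and I would spend most of the argument there. Fix an increasing continuous $\langle i_\gamma:\gamma<\cf(\zeta^*)\rangle\subseteq N\cap\zeta^*$ cofinal in $\zeta^*$ with $i_0=0$, and project $\bar q$ to candidates $\bar q^\gamma$ for $\bbP_{i_\gamma}$. Generic's strategy in $\Game^\cS_\gamma(r,N,h,\bbP_{\zeta^*},R^{\rm pr},\bar f,\bar q)$ is assembled block by block: on coordinates in $[i_\gamma,i_{\gamma+1})$ she plays according to the strategy given by pure sequential$^+$ properness of the iterands there, threading these local plays together through the completeness game $\Game^\lambda_0(\name{\bbQ}_\xi)$ via $\name{\st}^0_\xi$ exactly as in Lemmas \ref{655.3.3} and \ref{emplystrat}, while the RS--condition calculus of Lemma \ref{RSbounds} provides the upper bounds at the $i_\gamma$ needed to pass to the next block. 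The main obstacle is the interaction at a guessed limit $\delta\in\cS[\gamma]\cap\bigcap_{i<\delta}C_i$: the semi--diamond only guarantees that the \emph{global} successor terms $h(f_\delta(i+1))=r^-_{i+1}$ in $\bbP_{\zeta^*}$ are read correctly, whereas each coordinate purity $\name{R}^{\rm pr}_\xi$ must fire on the coordinatewise successor terms $r^-_{i+1}(\xi)$, which a priori need not be the successor moves of any genuine play of the $\xi$--th coordinate game. This is precisely the mismatch isolated in Remark \ref{correct655} (``guessing on coordinate $\vare$'' versus ``guessing in $\bbP_{\zeta^*}$''); the resolution is to arrange the local plays so that their successor moves are \emph{defined} to equal $r^-_{i+1}(\xi)$, and then to invoke the corrected, successor--only version of $(\circledast)$ at each coordinate. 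Gluing the resulting coordinate winners over $\gamma<\cf(\zeta^*)$ by Lemma \ref{655.3.3} yields $r_\delta$ with $q_\delta\leq r_\delta$ and $h\circ f_\delta\; R^{\rm pr}\; r_\delta$, which is $(\circledast)$ for $\bbP_{\zeta^*}$.

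Finally, for part (2) I would verify that, under the extra hypothesis that each $\name{\bbQ}_\xi$ is forced $({<}\lambda)$--complete, the purity $R^{\rm pr}$ above is in fact $\lambda$--sequential$^+$. Given $\bar r\; R^{\rm pr}\; s_\beta$ for an increasing $\langle s_\beta:\beta<\xi\rangle$ with $\xi<|\delta|^+\le\lambda$, one builds the required common upper bound $s$ with $\bar r\; R^{\rm pr}\; s$ coordinate by coordinate: at each $\xi$ the iterand's $\lambda$--sequential$^+$ purity supplies the amalgamating value $s(\xi)$, and genuine $({<}\lambda)$--completeness guarantees that the ${<}\lambda$--many conditions $s_\beta(\xi)$ really do have an upper bound to feed into that clause (strategic completeness alone does not suffice, since the $s_\beta$ are presented from outside any play we control). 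This completeness is exactly what fails in part (1), which is why there $R^{\rm pr}$ is only guaranteed to be a $\lambda$--sequential purity and $\bbP_{\zeta^*}$ only purely sequentially proper.
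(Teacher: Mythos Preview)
There are two genuine gaps in your proposal.

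First, the high--cofinality limit case does not reduce to a bounded stage as you claim. While each individual $q_\delta$ in the candidate has support of size ${<}\lambda$, the candidate $\bar q$ consists of $\lambda$ many such conditions, and together with the list $\bar{\cI}$ of open dense sets their supports exhaust all of $N\cap\zeta^*$. The generic condition $r$ you must construct will have domain exactly $N\cap\zeta^*$, which is cofinal in $\zeta^*$ when $\cf(\zeta^*)\ge\lambda$; there is no single $\xi<\zeta^*$ to which the data reflect. The paper avoids this case split entirely: it treats all $\zeta^*$ at once, with a single internal induction on $\zeta\in(\zeta^*+1)\cap N$ (Claim~\ref{cl3}) carried out inside one fixed model $N$.

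Second, and more seriously, your definition of $R^{\rm pr}$ is too strong to be verifiable at guessed stages. If $\bar r\;R^{\rm pr}\;r$ demands the coordinate purity $\name{R}^{\rm pr}_\xi$ at \emph{every} $\xi$, then to confirm $h\circ f_\delta\;R^{\rm pr}\;r_\delta$ at a guessed $\delta$ you must check all $\xi\in N\cap\zeta^*$ simultaneously. But the coordinate games cannot all have been running since time $0$: only ${<}\lambda$ coordinates can be active at any stage of the global play, and a coordinate activated shortly before $\delta$ has not been played long enough for its own $(\circledast)$ to fire. The paper's key device, absent from your sketch, is a grading function $\Upsilon:N\cap(\zeta^*{+}1)\to\lambda$ recording when each $\xi$ enters an increasing exhaustion $\langle w_i:i<\lambda\rangle$ of $N\cap(\zeta^*{+}1)$; the purity $R^{\rm pr}$ is then defined so that only coordinates $\xi$ with $\Upsilon(\xi)\cdot\omega\le\delta$ are required to satisfy $\name{R}^{\rm pr}_\xi$ for a sequence of length $\delta$ (see $(\boxtimes)_4$). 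Correspondingly, at coordinate $\xi$ Generic plays the \emph{shifted} game $\Game^\cS_{\Upsilon(\xi)\cdot\omega}$ against the shifted semi--diamond $f^{\langle\xi\rangle}_\delta(\alpha)=f_\delta(\Upsilon(\xi)+\alpha)$. This grading is exactly what lets the verification in Subclaim~\ref{sub} go through: at a guessed $\delta\in E^\zeta_0$ one has $\Upsilon(\xi)\cdot\omega<\delta$ for every $\xi\in w_\delta$, so only finitely graded, long--running coordinates need checking. Your identification of the successor--move mismatch from Remark~\ref{correct655} is correct, but without the $\Upsilon$ machinery you have no way to ensure that the coordinate games have been running long enough for their winning condition to apply.
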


\begin{proof}
By adding trivial iterands to the tail of $\bar{\bbQ}$ we may assume that
$\zeta^*\geq\lambda^+$ (this demand is just to avoid slight complications in
enumerations in $(\boxtimes)_1,(\boxtimes)_2$ below). 

Assume that $\chi$ is a large enough regular cardinal and $N\prec
(\cH(\chi),\in,<^*_\chi)$, $|N|=\lambda$, ${}^{<\lambda} N\subseteq N$ and
$\lambda,\bar{\bbQ},D,\cS,\ldots\in N$. For $\xi<\zeta^*$ let
$\name{\st}^0_\xi$ be the $<^*_\chi$--first $\bbP_\xi$--name for a regular
winning strategy of Complete in $\Game^\lambda_0(\name{\bbQ}_\xi)$ (so if
$\xi\in\zeta^*\cap N$ then also $\name{\st}^0_\xi\in N$). To define a
$\lambda$--sequential purity $R^\pr$ on $\bbP_{\zeta^*}$ we first fix  
\begin{enumerate}
\item[$(\boxtimes)_1$] a list $\langle(\name{\tau}_i,\zeta_i):i<\lambda
\rangle$ of all pairs $(\name{\tau},\zeta)\in N$ such that $\zeta\leq
\zeta^*$, $\cf(\zeta)\geq \lambda$ and $\name{\tau}$ is a $\bbP_\zeta$--name
for an ordinal, and 
\item[$(\boxtimes)_2$] an increasing sequence $\bar{w}=\langle w_i:i<\lambda
  \rangle$ of closed subsets of $\zeta^*+1$ such that  
  \begin{enumerate}
  \item[$(\boxtimes)_2^a$] $w_0=\{0,\zeta^*\}$, $\bigcup\limits_{i<\lambda}
    w_i= (\zeta^*+1)\cap N$ and for each $i<\lambda$ we have $|w_i|\leq 2+i$
    and 
  \item[$(\boxtimes)_2^b$] if $i$ is limit then $w_i$ is the closure of
    $\bigcup\limits_{j<i} w_j$, and 
  \item[$(\boxtimes)_2^c$] if $i$ is a successor, say $i=j+1$, then for some
    $\xi\in w_i\cap \zeta_j$ we have $\sup(w_j\cap\zeta_j)<\xi$. 
  \end{enumerate}
\end{enumerate}
We also define a function $\Upsilon:(\zeta^*+1)\cap N\longrightarrow
\lambda$ by  
\begin{enumerate}
\item[$(\boxtimes)_3$] $\Upsilon(\vare)=\min(i<\lambda: \vare\in w_i)$ for
  $\vare\in (\zeta^*+1)\cap N$. 
\end{enumerate}
Then for every $\zeta\in\zeta^*\cap N$ we fix a $\bbP_\zeta$--name
$\name{R}^\pr_\zeta$ for a binary relation such that   
\[\begin{array}{ll}
\forces_{\bbP_\zeta}&\mbox{`` $\name{R}^\pr_\zeta$ is a
  $\lambda$--sequential$^+$ purity on $\name{\bbQ}_\zeta$ such that }\\
&\ \mbox{ if $N[\name{G}_{\bbP_\zeta}]$ is a model of the right form, then }\\
&\ \mbox{ $\name{R}^\pr_\zeta$ witnesses the pure sequential$^+$ properness
  of $\name{\bbQ}_\zeta$ for $N[\name{G}_{\bbP_\zeta}]$ ''.}
\end{array}\]  
Now, for $\zeta\leq\zeta^*$, we define a binary relation $R^\pr\rest\zeta$ by 
\begin{enumerate}
\item[$(\boxtimes)_4$] $\bar{p}\;R^\pr\rest \zeta\; p$ if and only if 
\begin{enumerate}
\item[$(\boxtimes)_4^a$] $\bar{p}=\langle p_\alpha:\alpha<\delta\rangle$ is
  an increasing sequence of conditions from $\bbP_\zeta$ of a limit length 
  $\delta<\lambda$, $p\in\bbP_\zeta$ is an upper bound to $\bar{p}$ and  
\item[$(\boxtimes)_4^b$] for every $\xi\in\zeta\cap N$ such 
that\footnote{the ``$\cdot$'' stands for the ordinal multiplication}  
$\Upsilon(\xi)\cdot \omega\leq\delta$ we have 
\[p\rest\xi\forces_{\bbP_\xi}\mbox{`` }\langle p_{\Upsilon(\xi)+
\alpha}(\xi):\alpha<\delta\rangle\; \name{R}^\pr_\xi\; 
p(\xi)\mbox{ ''.}\]  
\end{enumerate}
\end{enumerate}
The relation $R^\pr$ is $R^\pr\rest\zeta^*$. 

One may wonder why we use $\Upsilon\cdot\omega$ in $(\boxtimes)^b_4$. When
playing the game $\Game^\cS_1$, we will play its variant on each
coordinate from $N\cap \zeta^*$. However, at a stage $\delta$ of the main
game, we will look at coordinates from $w_\delta$ only, so we will start our
coordinate games with some delay. At stage $j=\Upsilon(\xi)+i$, the length
of the play constructed so far at coordinate $\xi$ is $i$. It may happen
that $i\in\cS$ but $\Upsilon(\xi)+i\notin \cS$. Since we do want to control
who is making a move at every coordinate, it makes sense to demand that
$\delta\setminus \Upsilon(\xi)$ has order type $\delta$ whenever $\delta$ is
of any potential importance. This translates to the demand that at
coordinate $\xi\in N\cap\zeta^*$ we play game $\Game^\cS_{\Upsilon(\xi)\cdot
  \omega}$ and relevant $\delta$s are not smaller that  $\Upsilon(\xi)\cdot
\omega$.  

Clearly, for each $\zeta\leq
\zeta^*$, 
\begin{itemize}
\item $R^\pr\rest\zeta$ is a $\lambda$--sequential purity on $\bbP_\zeta$, 
\item if each $\name{\bbQ}_\alpha$ is (forced to be)
  $({<}\lambda)$--complete\footnote{We need this assumption to get bounds on
    coordinates $\xi<\zeta$ for which $(\boxtimes)^b_4$ is not ``active''.},
  then $R^\pr\rest\zeta$ is a   $\lambda$--sequential$^+$ purity, 
\item if $\langle p_\alpha:\alpha<\delta\rangle\, (R^\pr\rest\zeta)\; p$ and
  $\zeta'<\zeta$ then  $\langle p_\alpha\rest \zeta':\alpha<\delta\rangle\;
  (R^\pr\rest\zeta')\; p\rest\zeta'$. 
\end{itemize}

We will show that the relation $R^\pr$ witnesses the pure sequential
properness of $\bbP_{\zeta^*}$ for $N$. So let $h,\bar{\cI},\bar{f},\bar{q}$
and $p$ be as in $(\odot)$ of Definition \ref{n1.3}(1), that is they satisfy
the following conditions $(\boxtimes)_5$--$(\boxtimes)_8$.
\begin{enumerate}
\item[$(\boxtimes)_5$] A function $h:\lambda\longrightarrow N$ is such that
  $\bbP_{\zeta^*}\cap N\subseteq\rng(h)$. 
\item[$(\boxtimes)_6$] A sequence $\bar{f}=\langle f_\delta:\delta\in\cS
  \rangle$ is a $(D,\cS,h)$--semi diamond for $\bbP_{\zeta^*}$ over $N$.
\item[$(\boxtimes)_7$] $\bar{\cI}=\langle\cI_\alpha:\alpha<\lambda\rangle$
  lists all open dense subsets of $\bbP_{\zeta^*}$ from $N$.
\item[$(\boxtimes)_8$] $\bar{q}=\langle q_\delta:\delta\in\cS\mbox{ is limit
}\rangle$ is an $(N,h,\bbP_{\zeta^*},R^\pr,\bar{f},\bar{\cI})$--candidate,
$p\in \bbP_{\zeta^*}\cap N$. 
\end{enumerate}
We also have an ordinal $\gamma<\lambda$, but replacing $\cS$ with 
$\cS[\gamma]$ in the following arguments makes not much difference, 
so we may pretend that $\gamma=1$ and $\cS[\gamma]=\cS$. 

We will define a condition $r\geq p$ such that Generic will have a winning
strategy in the game $\Game^\cS_1(r,N,h,\bbP_{\zeta^*},R^\pr,\bar{f},
\bar{q})$, that is $r$ will be generic for the candidate $\bar{q}$. This
condition will be defined essentially by considering the game
$\Game^\cS_{\Upsilon(\zeta)\cdot\omega}$ on each coordinate $\zeta\in
\zeta^*\cap N$. The proof that it has the required properties will be by
induction on $\zeta$. Therefore, we have to introduce both the restrictions
of our parameters to $\zeta$ (denoted with superscript $[\zeta]$) and the
parameters at coordinate $\zeta$ (denoted with superscript
$\langle\zeta\rangle$). We will also need other auxiliary notions,
definitions and claims. First note that if we modify $f_\delta$ (and
$q_\delta$) for all limit $\delta\in\cS$ such that $h\circ f_\delta$ is not
an increasing sequence of conditions from $\bbP_{\zeta^*}$, then we can make 
the winning for Generic only more difficult. So we may assume that for each
limit $\delta\in\cS$ we have
\begin{enumerate}
\item[$(\boxtimes)_9$] $h\circ f_\delta$ is a
  $\leq_{\bbP_{\zeta^*}}$--increasing sequence of members of
  $\bbP_{\zeta^*}\cap N$.
\end{enumerate}
\medskip

For $\zeta\in N\cap(\zeta^*+1)$ we define 
\begin{enumerate}
\item[$(\boxtimes)_{10}$] $h^{[\zeta]}:\lambda\longrightarrow N$ is a
  function such that for each $\alpha<\lambda$:\\
$h^{[\zeta]}(\alpha)=h(\alpha)\rest\zeta$ provided $h(\alpha)$ is a
function, and $h^{[\zeta]}(\alpha)=\emptyset$ otherwise, 
\item[$(\boxtimes)_{11}$] $\bar{\cI}^{[\zeta]}=\langle \cI_\alpha^{[\zeta]}: 
\alpha<\lambda\rangle$, where $\cI^{[\zeta]}_\alpha=\{p\rest\zeta: p\in
\cI_\alpha\}$.
\end{enumerate}
\medskip

\noindent Clearly $\langle q_\delta\rest \zeta:\zeta\in\cS\rangle$ does not
have to be a $(N,h^{[\zeta]},\bbP_\zeta,R^\pr\rest
\zeta,\bar{f},\bar{\cI}^{[\zeta]})$--candidate. To define suitable
$q^{[\zeta]}$ we need the concept of saturated conditions. Suppose that
$\delta\in\cS$ is a limit ordinal, $\zeta\in w_\delta$ and
$q^*\in\bbP_\zeta$. We say that the condition $q^*$ is {\em saturated over
  $w_\delta,h,\bar{\cI}$ at $\zeta$} if one of the following two
possibilities $(\boxtimes)_{12}^a$ or $(\boxtimes)_{12}^b$ holds:
\begin{enumerate}
\item[$(\boxtimes)_{12}^a$] 
  \begin{enumerate}
\item[(i)] $(h^{[\zeta]}\circ f_\delta)\;
  (R^\pr\rest\zeta)\; q^*$ and $q^*\in\bigcap\limits_{\alpha<\delta}
  \cI_\alpha^{[\zeta]}$, but
\item[(ii)]  if $\zeta'\in w_\delta\setminus (\zeta+1)$ then there is no 
  $q'\in\bbP_{\zeta'}$ such that $q^*\leq q'$ and 
\[(h^{[\zeta']}\circ f_\delta)\; (R^\pr\rest\zeta')\; q'\quad\mbox{ and 
}\quad q'\in\bigcap\limits_{\alpha<\delta} \cI_\alpha^{[\zeta']}.\]  
  \end{enumerate}
\item[$(\boxtimes)_{12}^b$] 
  \begin{enumerate}
\item[(i)] $0<\zeta=\sup(w_\delta\cap\zeta)$ and for each $\xi\in
    w_\delta\cap\zeta$ we have  
\[(h^{[\xi]}\circ f_\delta)\; (R^\pr\rest\xi)\; q^*\rest\xi\quad\mbox{ and 
}\quad q^*\rest\xi\in\bigcap\limits_{\alpha<\delta}
\cI_\alpha^{[\xi]},\quad\mbox{ but }\]  
\item[(ii)]there is no condition $q'\in\bbP_\zeta$ such that $q^*\leq q'$ and 
\[(h^{[\zeta]}\circ f_\delta)\; (R^\pr\rest\zeta)\; q'\quad\mbox{ and 
}\quad q'\in\bigcap\limits_{\alpha<\delta} \cI_\alpha^{[\zeta]}.\]   
  \end{enumerate}
\end{enumerate}

\begin{claim}
\label{cl0}
Suppose $\delta\in\cS$ is a limit ordinal. Then there exist $\zeta\in
w_\delta$ and $q^*\in \bbP_\zeta\cap N$ such that $q^*$ is saturated over
$w_\delta,h,\bar{\cI}$ at $\zeta$. 
\end{claim}

\begin{proof}[Proof of the Claim]
Let $\langle\alpha_\xi:\xi\leq\xi^*\rangle$ be the increasing enumeration 
of $w_\delta$ (so it is a continuous sequence and $\alpha_{\xi^*}=
\zeta^*$ and $\xi^*<|\delta|^+$). We construct the condition $q^*$ as an
upper bound to an increasing sequence $\langle q^+_\xi:\xi<\xi_0\rangle$,
$\xi_0\leq\xi^*$. Conditions $q^+_\xi$ are chosen so that $q^+_\xi\in
\bbP_{\alpha_\xi}\cap N$ and they satisfy $(\boxtimes)^a_{12}$(i). To make
sure that we can get pass the limit steps, at coordinates $\vare\in
N\cap\zeta^*$ which are not relevant for $R^\pr$ (i.e., with
$\Upsilon(\vare)\cdot\omega >\delta$) we employ strategic
completeness. (This forces us to introduce auxiliary conditions $q^*_\xi$.)
We do not have to worry about coordinates relevant for $R^\pr$ (i.e., with
$\Upsilon(\vare)\cdot \omega\leq \delta$) as there the appropriate bounds
exist by \ref{n1.0}(c). The construction may stop at some stage
$\xi_0<\xi^*$ because we arrived to a saturated condition.  If the procedure
does not stop before $\xi^*$, then we get a condition saturated at
$\zeta^*$.  

Let us present this argument with all technical details. So we attempt to
choose inductively a  sequence $\langle q^+_\xi,q^*_\xi:\xi\leq
\xi^*\rangle$ so that  $q^+_\xi,q^*_\xi\in\bbP_{\alpha_\xi}\cap N$ and   
\begin{enumerate}
\item[(i)]  if $\xi'<\xi''\leq\xi^*$ then $q^+_{\xi'}\leq q^*_{\xi'}\leq
  q^+_{\xi''}\leq q^*_{\xi''}$, 
\item[(ii)] if $\vare\in N\cap \zeta^*$ and $\Upsilon(\vare) \cdot
  \omega>\delta$, then for every $\xi\leq\xi^*$,  
\[\begin{array}{ll}
q^*_\xi\rest \vare\forces_{\bbP_\vare}&\mbox{`` }\langle q^+_{\xi'}(\vare),
q^*_{\xi'}(\vare): \xi'\leq\xi\rangle\mbox{ is a legal partial play of }
\Game^\lambda_0(\name{\bbQ}_\vare)\\ 
&\mbox{ in which Complete uses her regular winning strategy 
$\name{\st}^0_\vare$ '',}
\end{array}\]
\item[(iii)] if $\vare\in N\cap\zeta^*$ and $\Upsilon(\vare)\cdot \omega
\leq\delta$, then $q^+_\xi(\vare)=q^*_\xi(\vare)$, 
\item[(iv)] $(h^{[\alpha_\xi]}\circ f_\delta)\; (R^\pr\rest\alpha_\xi)\;
  q^+_\xi$ and $q^+_\xi\in \bigcap\limits_{\alpha<\delta}
  \cI^{[\alpha_\xi]}_\alpha$.  
\end{enumerate}
If we have arrived to a limit stage $\xi_0$ of the construction and we have
defined successfully $\langle q^+_\xi,q^*_\xi:\xi<\xi_0\rangle$ (so that
conditions (i)--(iv) are satisfied for all $\xi<\xi_0$), then, remembering
${}^{<\lambda} N\subseteq N$ and $\delta<\lambda$, we may find a
condition $q^\diamond_{\xi_0}\in\bbP_{\alpha_{\xi_0}}\cap N$ which is above
all $q^+_\xi$ (for $\xi<\xi_0$) and satisfies\footnote{Here we use the
  assumption that $\name{R}^\pr_\vare$ are sequential$^+$ purities rather
  than just sequential purities.} 
$(h^{[\alpha_{\xi_0}]}\circ f_\delta)\; (R^\pr\rest \alpha_{\xi_0})\;
q^\diamond_{\xi_0}$. Then also $q^\diamond_{\xi_0}\rest\alpha_\xi\in
\bigcap\limits_{\alpha<\delta} \cI_\alpha^{[\alpha_\xi]}$ for all $\xi<\xi_0$,
so either $q^\diamond_{\xi_0}$ is saturated by $(\boxtimes)^b_{12}$ (and
then we stop), or  else we may pick $q^+_{\xi_0}\in\bbP_{\alpha_{\xi_0}}
\cap N$ stronger than $q^\diamond_{\xi_0}$ and satisfying the demand in
(iv), and then we pick $q^*_{\xi_0}\in \bbP_{\alpha_{\xi_0}}\cap N$ by
(ii)+(iii).   

Now, if we arrived to a successor stage $\xi=\xi_0+1\leq \xi^*$ and we have
defined $q^+_{\xi_0},q^*_{\xi_0}$, then either $q^*_{\xi_0}$ is saturated by 
$(\boxtimes)^a_{12}$ (and then we stop), or else we may pick $q^+_{\xi_0+1}
\in \bbP_{\alpha_{\xi_0+1}}\cap N$ stronger than $q^*_{\xi_0}$ and such that
(iv) holds, and then we choose $q^*_{\xi_0+1}$ by (ii)+(iii). (We also
stipulate $q^+_0=q^*_0=\emptyset_{\bbP_0}\in\bbP_0$.) 

If we did manage to carry out the construction up to $\xi^*$ and we defined
successfully $q^+_{\xi^*}$, then it is vacuously saturated by
$(\boxtimes)^a_{12}$. 
\end{proof}

Now, for each limit $\delta\in\cS$ we fix a pair $(q^*_\delta,
\zeta^*_\delta)\in N$ so that  
\begin{enumerate}
\item[$(\boxtimes)_{13}^a$] $q_\delta^*$ is saturated over
  $w_\delta,h,\bar{\cI}$ at $\zeta_\delta^*$, and 
\item[$(\boxtimes)_{13}^b$] if the condition $q_\delta$ given by 
$(\boxtimes)_8$ is saturated over $w_\delta,h,\bar{\cI}$ at $\zeta^*$, 
then $q_\delta^*=q_\delta$, $\zeta^*_\delta=\zeta^*$. 
\end{enumerate}
Next for $\zeta\in (\zeta^*+1)\cap N$ we define 
\begin{enumerate}
\item[$(\boxtimes)_{14}$] $\bar{q}^{[\zeta]}=\langle q_\delta^{[\zeta]}:
  \delta\in\cS\mbox{ is limit }\rangle$, where for a limit $\delta\in \cS$
  we set 
\begin{itemize}
\item if $\zeta<\zeta^*_\delta$ then $q_\delta^{[\zeta]}=q^*_\delta\rest\zeta$,  
\item if $\zeta=\zeta^*_\delta$, $q^*_\delta\in\bigcap\limits_{\alpha<\delta}
  \cI_\alpha^{[\zeta]}$ and $(h^{[\zeta]}\circ f_\delta)\;
  (R^\pr\rest\zeta)\; q^*_\delta$, then $q^{[\zeta]}_\delta=q^*_\delta$, 
\item otherwise, $q_\delta^{[\zeta]}$ is the $<^*_\chi$--first condition in
  $\bbP_\zeta$ satisfying the demands in (a)--(c) of \ref{n1.1}(2) with
  $h^{[\zeta]},R^\pr\rest\zeta,\bar{\cI}^{[\zeta]}$ here in place of
  $h,R^\pr,\bar{\cI}$ there.
\end{itemize}
\end{enumerate}

\begin{claim}
\label{cl1}
Let $\zeta\in N\cap (\zeta^*+1)$. Then 
\begin{enumerate}
\item  $h^{[\zeta]}:\lambda\longrightarrow N$ is such that $\bbP_\zeta\cap
  N\subseteq\rng(h^{[\zeta]})$, 
\item $\bar{f}$ is a $(D,\cS,h^{[\zeta]})$--semi diamond sequence for
  $\bbP_\zeta$,
\item $\bar{\cI}^{[\zeta]}$ lists all open dense subsets of $\bbP_\zeta$
  belonging to $N$, 
\item $\bar{q}^{[\zeta]}$ is an $(N,h^{[\zeta]},\bbP_\zeta,R^\pr\rest \zeta,
  \bar{f},\bar{\cI}^{[\zeta]})$--candidate.
\end{enumerate} 
\end{claim}

\begin{proof}[Proof of the Claim]
Straightforward from the definitions. 
\end{proof}

For $\zeta\in\zeta^*\cap N$ we set
\begin{enumerate}
\item[$(\boxtimes)_{15}$] $\bar{f}^{\langle\zeta\rangle}=\langle
f_\delta^{\langle\zeta\rangle}:\delta\in\cS\rangle$,  where for $\delta\in
\cS$ and $\alpha<\delta$ we let
\[f^{\langle\zeta\rangle}_\delta(\alpha)=\left\{
\begin{array}{ll}
f_\delta(\alpha) & \mbox{ if }\delta<\Upsilon(\zeta)\cdot \omega,\\
f_\delta(\Upsilon(\zeta)+\alpha)& \mbox{ if }\delta\geq\Upsilon(\zeta)\cdot
\omega,
\end{array}\right.\]
\end{enumerate}
and then we define $\bbP_\zeta$--names $\name{h}^{\langle\zeta\rangle},
\name{\bar{\cI}}^{\langle \zeta\rangle},\name{\bar{q}}^{\langle \zeta\rangle}$
so that 
\begin{enumerate}
\item[$(\boxtimes)_{16}$] $\name{\bar{\cI}}^{\langle \zeta\rangle}=\langle
  \name{\cI}^{\langle \zeta\rangle}_\alpha:\alpha<\lambda\rangle$ where
  $\name{\cI}^{\langle \zeta\rangle}_\alpha=\{p(\zeta):p\in \cI_\alpha\ \&\
  p\rest\zeta\in \name{G}_{\bbP_\zeta}\}$,
\item[$(\boxtimes)_{17}$] $\name{h}^{\langle\zeta\rangle}$ is a name for a
  function with domain $\lambda$ and such that for each $\gamma<\lambda$, 
  if $h(\gamma)$ is a function, $\zeta\in\dom(h(\gamma))$ and
  $(h(\gamma))(\zeta)$ is a $\bbP_\zeta$--name, then $\name{h}^{\langle
  \zeta\rangle}(\gamma)=(h(\gamma))(\zeta)$, otherwise it is
$\name{\emptyset}_{\name{\bbQ}_\zeta}$,
\item[$(\boxtimes)_{18}$] $\name{\bar{q}}^{\langle\zeta\rangle}= \langle
\name{q}_\delta^{\langle\zeta\rangle}:\delta\in\cS\mbox{ is limit }\rangle$,
where for a limit $\delta\in \cS$, $\name{q}_\delta^{\langle\zeta\rangle}$
is the $<^*_\chi$--first $\bbP_\zeta$--name for a condition in
$\name{\bbQ}_\zeta$ such that:\\   
if $\zeta<\zeta^*_\delta$ and the condition $q_\delta^*(\zeta)=
q_\delta^{[\zeta+1]}(\zeta)$ satisfies the demands in (a)--(c) of
Definition \ref{n1.1}(2) with $\name{h}^{\langle\zeta\rangle},  
f_\delta^{\langle\zeta\rangle},\name{R}^\pr_\zeta,
\name{\bar{\cI}}^{\langle\zeta\rangle}$ here in place of $h, f_\delta,
R^\pr,\bar{\cI}$ there, then $\name{q}_\delta^{\langle\zeta\rangle}= 
q^*_\delta(\zeta)$, otherwise, $\name{q}_\delta^{\langle\zeta\rangle}$ is 
any condition in $\name{\bbQ}_\zeta$ satisfying those demands.
\end{enumerate}
Note that 

\begin{enumerate}
\item[$(\boxtimes)_{19}$] if $\delta\in \cS$ is limit and $\zeta\in 
\zeta^*_\delta\cap N$ and $\Upsilon(\zeta)\cdot \omega\leq\delta$, then 
$q^*_\delta\rest\zeta\forces_{\bbP_\zeta}$``$\name{q}_\delta^{\langle
\zeta \rangle}=q^*_\delta(\zeta)$''. 
\end{enumerate}

\begin{claim}
\label{cl2}
Assume that $\zeta\in N\cap \zeta^*$ and $r\in\bbP_\zeta$ is
$(N,\bbP_\zeta)$--generic in the standard sense. Then the condition $r$
forces (in $\bbP_\zeta$) that:   
\begin{enumerate}
\item $N[\name{G}_{\bbP_\zeta}]$ is a model of the right form,
$\name{\bar{\cI}}^{\langle \zeta\rangle}$ lists all open dense subsets of
$\name{\bbQ}_\zeta$ which belong to $N[\name{G}_{\bbP_\zeta}]$ and the
function $\name{h}^{\langle\zeta \rangle}$ is such that $N[
\name{G}_{\bbP_\zeta}]\cap\name{\bbQ}_\zeta\subseteq \rng(
\name{h}^{\langle\zeta\rangle})$, and   
\item $\bar{f}^{\langle \zeta\rangle}$ is a $(D,\cS, \name{h}^{\langle
\zeta\rangle})$--semi diamond for $\name{\bbQ}_\zeta$ over
$N[\name{G}_{\bbP_\zeta}]$, and    
\item $\name{\bar{q}}^{\langle \zeta\rangle}$ is an
  $(N[\name{G}_{\bbP_\zeta}],h^{\langle\zeta \rangle},\name{\bbQ}_\zeta,
  \bar{f}^{\langle\zeta\rangle}, \name{\bar{\cI}}^{\langle\zeta\rangle}
  )$--candidate.  
\end{enumerate}
\end{claim}

\begin{proof}[Proof of the Claim]
(2)\quad Assume that $r^*\in\bbP_\zeta$, $r^*\geq r$, and
$\name{\bar{q}}'=\langle\name{q}_\alpha':\alpha<\lambda\rangle$ is a 
$\bbP_\zeta$--name for an increasing sequence of conditions from
$\name{\bbQ}_\zeta\cap N[\name{G}_{\bbP_\zeta}]$ (so $\name{q}'_\alpha$ is
a name for an object in $N[\name{G}_{\bbP_\zeta}]$, but it does not have to
belong to $N$). Suppose also that $\name{A}_\xi$ (for $\xi<\lambda$) are
$\bbP_\zeta$--names for members of $D\cap\bV$.   

Construct inductively a sequence $\langle r_i^-,\name{q}''_i,r_i,r_i^+,A_i:
i<\lambda\rangle$ so that 
\begin{enumerate}
\item[(i)] $r_i^-\in \bbP_\zeta\cap N$, $r_i,r_i^+\in \bbP_\zeta$, $r^*\leq
  r_0$, $r^-_i\leq r_i\leq r^+_i$, $r^-_i\leq r^-_j$ for $i<j<\lambda$, 
\item[(ii)] for each $\vare<\zeta$ and $j<\lambda$: 
\[\begin{array}{ll}
r^+_j\rest \vare\forces_{\bbP_\vare}&\mbox{`` }\langle r_i(\vare), r_i^+
(\vare): i\leq j\rangle\mbox{ is a legal partial play of }
\Game^\lambda_0(\name{\bbQ}_\vare) \\   
&\mbox{ in which Complete uses her regular winning strategy 
$\name{\st}^0_\vare$ '',}  
\end{array}\]

\item[(iii)] $\name{q}_i''\in N$ is a $\bbP_\zeta$--name for a condition in
  $\name{\bbQ}_\zeta$ and $r_i\forces_{\bbP_\zeta}$`` $\name{q}'_i=
  \name{q}''_i$ '',  
\item[(iv)] $A_i\in D\cap\bV$ and  $r_i\forces_{\bbP_\zeta}$`` $\name{A}_i=A_i$ '',
\item[(v)] $r^-_j\forces_{\bbP_\zeta}$`` the sequence $\langle\name{q}_i'':
i\leq j\rangle$ is $\leq_{\name{\bbQ}_\zeta}$--increasing ''.  
\end{enumerate}
[Why is the construction possible? First, after arriving to a step
$j<\lambda$ we note that the sequence $\langle r_i,r^+_i:i<j\rangle$ has an
upper bound by (ii). So we may choose a condition $r_j'$ stronger than all
$r_i,r^+_i$ (for $i<j$) and deciding the values of $\name{q}'_j,\name{A}_j$,
say $r_j'\forces_{\bbP_\zeta}$`` $\name{q}'_j=\name{q}''_j\ \&\
\name{A}_j=A_j$ '', where $\name{q}_j''\in N$ and $A_j\in D\cap \bV$. Since
$r_j'\geq r^-_i$ (for all $i<j$) and $r'_j\forces_{\bbP_\zeta}$`` the
sequence $\langle\name{q}_i'': i\leq j\rangle$ is increasing '', and since
$r'_j$ is  $(N,\bbP_\zeta)$--generic, we may choose conditions $r_j^-\in
N\cap \bbP_\zeta$ and $r_j\in\bbP_\zeta$ so that $r^-_i\leq r^-_j\leq r_j$
for $i\leq j$ and $r_j'\leq r_j$ and $r^-_j\forces_{\bbP_\zeta}$`` the
sequence  $\langle\name{q}_i'': i\leq j\rangle$ is increasing ''. Finally,
$r^+_j\in \bbP_\zeta$ is defined essentially by (ii).]

\noindent Now define
\[\name{q}^*_i=\left\{\begin{array}{ll}
\name{\emptyset}_{\name{\bbQ}_\zeta}&\mbox{ if }i<\Upsilon(\zeta),\\ 
\name{q}_\alpha''&\mbox{ if }i=\Upsilon(\zeta)+\alpha.
\end{array}\right.\] 
Clearly, $\langle {r_i^-}^\frown\langle\name{q}_i^*\rangle:i<
\lambda\rangle$ is an increasing sequence of conditions from 
$\bbP_{\zeta^*}\cap N$. Therefore, as $D$ is normal and $A_i\in D$ and
$\bar{f}$ is a $(D,\cS,h)$--semi diamond, we may find a limit ordinal
$\delta\in\cS\cap\mathop{\triangle}\limits_{i<\lambda} A_i$ such that
$\delta>\Upsilon(\zeta)\cdot\omega$ and $\langle h\circ f_\delta(i):
i<\delta\rangle=\langle {r_i^-}^\frown\langle \name{q}_i^* \rangle: 
i<\delta\rangle$. Then for each $\alpha<\delta$ we have 
\[\name{h}^{\langle\zeta\rangle}\circ f_\delta^{\langle\zeta\rangle}(\alpha)
= \name{h}^{\langle\zeta\rangle}\big(f_\delta(\Upsilon(\zeta)+\alpha)\big) =
h\big(f_\delta(\Upsilon(\zeta)+\alpha)\big)(\zeta)=
\name{q}^*_{\Upsilon(\zeta)+\alpha} =\name{q}''_\alpha.\] 
Also, $r_\delta$ is stronger than all $r_i$ (for $i<\delta$) so it forces
that $(\forall i<\delta)(\name{A}_i=A_i\ \&\
\name{q}_i'=\name{q}_i'')$. Hence 
\[r_\delta\forces\mbox{`` }\delta\in \cS\cap\mathop{\triangle}_{\xi<\lambda}
\name{A}_\xi\quad \&\quad \name{h}^{\langle \zeta\rangle}\circ
f_\delta^{\langle\zeta\rangle}=\name{\bar{q}}'\rest\delta\mbox { ''.}\] 
The rest should be clear. 
\medskip

\noindent (1,3)\quad Straightforward. 
\end{proof}
\medskip

Remember that $\langle(\name{\tau}_i,\zeta_i):i<\lambda\rangle$ was fixed in
$(\boxtimes)_1$, $\langle w_i:i<\lambda\rangle$ was chosen in
$(\boxtimes)_2$, the function $\Upsilon$ was defined in $(\boxtimes)_3$, and
$\bar{q},p$ are from $(\boxtimes)_8$. Also, $q^*_i$ are the saturated conditions
picked in $(\boxtimes)_{13}$. 

\begin{claim}
\label{cl5}
There is a sequence $\langle p_i,p_i^+: i<\lambda\rangle\subseteq N$
such that for all $i<\lambda$  we have:    
\begin{enumerate}
\item[$(\boxtimes)_{20}^a$] $(p_i,w_i), (p^+_i,w_i)\in \bbP_{\zeta^*}^{\rm
    RS}\cap N$,  
\item[$(\boxtimes)_{20}^b$] $(p,\{0,\zeta^*\})\leq' (p_j,w_j)\leq'
  (p_j^+,w_j)\leq' (p_i,w_i)$ for all $j<i$, and\\
if  $\vare\in\zeta^*\cap N$ and $\Upsilon(\vare)>i$, or $\vare\in
\zeta^*\setminus N$, and if $G\subseteq\bbP_{\zeta^*}$ is
generic over $\bV$ such that $(p_i^+,w_i)\in' G$, then   
\[\begin{array}{ll}
(p_i^+,w_i)^G\rest \vare\forces_{\bbP_\vare}&\mbox{`` the sequence }
\langle (p_j,w_j)^G(\vare),(p_j^+, w_j)^G(\vare): j\leq i\rangle\\
&\mbox{ is a legal partial play of } \Game^\lambda_0(\name{\bbQ}_\vare)\\
&\mbox{ in which Complete uses her regular strategy $\name{\st}^0_\vare$ '',} 
\end{array}\]
\item[$(\boxtimes)_{20}^c$]  if $\vare\in N\cap\zeta^*$, then
  $p_{\Upsilon(\vare)}(\vare)=p_j(\vare)=p_j^+(\vare)$ for
  all $j\geq \Upsilon(\vare)$,   
\item[$(\boxtimes)_{20}^d$] if $i\in\cS$ is limit and $\vare\in\dom(q_i^*)$,
$\Upsilon(\vare)\geq i$,  then $p_i(\vare)$ is such that for every generic  
$G\subseteq\bbP_{\zeta^*}$ over $\bV$ with $(p_i,w_i)\in' G$, and two 
successive members $\vare',\vare''$ of the set $w_i$ such that 
$\vare'\leq \vare<\vare''$ we have: 
\begin{enumerate}
\item[{}] if $\{p_j(\vare)[G\cap\bbP_{\vare'}][G\cap \bbP_\vare]:
j<i\}\cup\{q_i^*(\vare)[G\cap \bbP_\vare]\}$ has an upper 
bound in $\name{\bbQ}_\vare[G\cap\bbP_\vare ]$, then 
$p_i(\vare)[G\cap\bbP_{\vare'}][G\cap \bbP_\vare]$ is such an upper bound, 
\end{enumerate}
\item[$(\boxtimes)_{20}^e$] for some $\xi\in w_{i+1}\cap \zeta_i\setminus
  (\sup(w_i\cap\zeta_i)+1)$ we have $p_{i+1}\rest \xi=p_i^+\rest \xi$ and
  for some a $\bbP_\xi$--name $\name{\tau}\in N$, for every generic
  $G\subseteq\bbP_{\zeta^*}$ over $\bV$ with $(p_{i+1},w_{i+1})\in' G$, 
we have that $\name{\tau}_i[G\cap \bbP_{\zeta_i}]=
  \name{\tau}[G\cap\bbP_\xi]$.  
\end{enumerate}  
\end{claim}

\begin{proof}[Proof of the Claim]
By induction on $i<\lambda$ we choose $p_i,p_i^+,\cA_i,\cA_i^+,f_i,f_i^+$
such that the following demands $(*)_1$--$(*)_6$ are satisfied. 
\begin{enumerate}
\item[$(*)_1$] $(p_i,w_i),(p_i^+,w_i)\in \bbP^{\rm RS}_{\zeta^*}\cap N$
  and $(\cA_i,f_i,w_i),(\cA_i^+,f_i^+,w_i)\in N$ are their standard
  representations, respectively, and $(p_0,w_0)=(p,\{0,\zeta^*\})$. 
\item[$(*)_2$] If $i<j<\lambda$ then $(\cA_i,f_i,w_i)\preccurlyeq
  (\cA_i^+,f_i^+,w_i) \preccurlyeq (\cA_j,f_j,w_j)$. 
\item[$(*)_3$] If $j<\lambda$, $\vare\in \zeta^*\setminus w_j$, $s_j\in
  \cA_j^+$, $f^+_j(s_j)\leq s_j$ and for $i\leq j$ the conditions
  $r_i\in\cA_i$, $s_i\in \cA_i^+$ are such that $r_i\leq s_i\leq s_j$, then 
\[\begin{array}{ll}
f^+_j(s_j)\rest\vare\forces_{\bbP_\vare}&\mbox{`` }\langle f_i(r_i)(\vare),
f^+_i(s_i)(\vare): i\leq j\rangle\mbox{ is a legal partial play of }
\Game^\lambda_0(\name{\bbQ}_\vare)\\ 
&\mbox{in which Complete uses her regular winning strategy
}\name{\st}^0_\vare \mbox{ ''.}
\end{array}\] 
\item[$(*)_4$] If $\vare\in w_j$, $j<\lambda$ and $r\in
  \cA_j$, $r'\in \cA_j^+$ and $r''\in \cA_{\Upsilon(\vare)}$ 
  are such that $r''\leq r\leq r'$, then 
\[f_j(r)(\vare)=f^+_j(r')(\vare)=f_{\Upsilon(\vare)}(r'')(\vare)
\quad \mbox{ and }\quad p_j(\vare)=p^+_j(\vare)=
p_{\Upsilon(\vare)}(\vare).\] 
\item[$(*)_5$] If $j\in\cS$ is limit and $\vare\in\dom(q^*_j)\setminus
  \bigcup\limits_{i<j} w_i$, $r\in \cA_j$ and $r_i\in \cA_i$ are such that
  $r_i\leq r$ for $i<j$ and if $f_j(r)\leq r$ then 
\[\begin{array}{ll}
\forces_{\bbP_\vare}&\mbox{`` if }\{f_i(r_i)(\vare):i<j\}\cup
\{q^*_j(\vare)\} \mbox{ has an upper bound in }\name{\bbQ}_\vare\\
&\ \mbox{ then $f_j(r_j)(\vare)$ is such an upper bound ''.}
\end{array}\]
\item[$(*)_6$] Let $\xi=\sup(w_{i+1}\cap \zeta_i)$. Then $p_{i+1}\rest
  \xi=p_i^+\rest \xi$ and for some $\bbP_\xi$--name $\name{\tau}\in N$ we
  have: 

if $r\in\cA_{i+1}$ and $f_{i+1}(r)\leq r$, then
$r\forces\name{\tau}=\name{\tau}_i$. 
\end{enumerate}

As declared in $(*)_1$ we set $p_0=p$ and we choose $p_0^+\in
\bbP_{\zeta^*}\cap N$ essentially by $(*)_3$. Thus we let
$\dom(p_0^+)=\dom(p_0)$, $p_0(0)=p_0^+(0)$ and for $\vare\in
\dom(p^+_0)\setminus \{0\}$ we let $p^+_0(\vare)$ be the $<^*_\chi$--first
$\bbP_\vare$--name such that  
\[p^+_0\rest \vare\forces\mbox{`` } p^+_0(\vare) \mbox{ is the answer to
  $p_0(\vare)$ given by the strategy $\name{\st}^0_\vare$ ''.}\]
Next we set $\cA_0=\cA^+_0=\{\emptyset_{\bbP_{\zeta^*}}\}$ and
$f_0(\emptyset_{\bbP_{\zeta^*}})=p_0$, $f_0^+(\emptyset_{\bbP_{\zeta^*}})=
p_0^+$.  
\medskip

Suppose we have chosen $p_i,p_i^+,\cA_i,\cA_i^+,f_i,f_i^+\in N$ for $i<j$ so
that the relevant demands in $(*)_1$--$(*)_6$ are satisfied.
\medskip

{\sc Case:}\qquad $j$ is a successor ordinal, say $j=i_0+1$.\\
Let $\xi=\sup(w_j\cap \zeta_{i_0})$ and $\zeta=\min(w_{i_0}\setminus
\zeta_{i_0})$. By $(\boxtimes)^c_2$ we know $\sup(w_{i_0}\cap\zeta_{i_0})
<\xi$. Working in $N$ apply Proposition \ref{X.7} to
$\xi,\zeta,w_{i_0}, w_j, \name{\tau}_{i_0}$ and $(p_{i_0}^+,w_{i_0})$ to get
$p_j$ and $\name{\tau}$ such that  
\begin{enumerate}
\item[$(\otimes)_1$] $(p_j,w_j)\in \bbP^{\rm RS}_{\zeta^*}\cap N$ and
  $(p^+_{i_0}, w_{i_0})\leq' (p_j,w_j)$,
\item[$(\otimes)_2$] $p_j\rest (\xi\cup [\zeta,\zeta^*))=p_{i_0}^+ \rest
  (\xi\cup [\zeta,\zeta^*))$,
\item[$(\otimes)_3$] $\name{\tau}\in N$ is a $\bbP_\xi$--name,
\item[$(\otimes)_4$] if $G\subseteq \bbP_{\zeta^*}$ is generic over $\bV$
  and $(p_j,w_j)\in' G$, then $\name{\tau}[G]=\name{\tau}_{i_0}[G]$. 
\end{enumerate}
By Proposition \ref{X.6} (used in $N$) we may choose a representation
$(\cA_j,f_j,w_j)\in N$ for $(p_j,w_j)$ such that $(\cA_{i_0}^+,f_{i_0}^+,
w_{i_0})\preccurlyeq (\cA_j,f_j,w_j)$. Necessarily, 
\begin{enumerate}
\item[$(\otimes)_5$] if $r\in\cA_j$ and $f_j(r)\leq r$, then $r\forces
  \name{\tau}=\name{\tau}_{i_0}$.  
\end{enumerate}
Since
\begin{enumerate}
\item[$(\otimes)_6$] if $\Upsilon(\vare)<j$ (i.e., $\vare\in w_{i_0}$), then
  either $\vare<\xi$ or $\vare\geq \zeta$, 
\end{enumerate}
we have $p_j(\vare)=p_{i_0}^+(\vare)$ whenever
$\Upsilon(\vare)<j$. Therefore if $\Upsilon(\vare)<j$, $r\in \cA_j$, $r^+\in
\cA_{i_0}^+$, $r'\in\cA_{\Upsilon(\vare)}$ and $r''\in
\cA_{\Upsilon(\vare)}^+$ are such that $r'\leq r''\leq r^+\leq r$, then
($p_j(\vare)= p^+_{\Upsilon(\vare)}(\vare)=p_{\Upsilon(\vare)}(\vare)$ and) 
\[f_j(r)(\vare)=f_{i_0}^+(r^+)(\vare)= f^+_{\Upsilon(\vare)}(r'')(\vare) =
f_{\Upsilon(\vare)}(r')(\vare).\] 
To define $\cA_j^+,f_j^+$ and $p^+_j$ we first note that 
\begin{enumerate}
\item[$(\otimes)_7$] if $r\in\cA_j$, $f_j(r)\leq r$ and $r_i\in \cA_i$,
  $s_i\in\cA_i^+$ (for $i<j$), are such that $r_i\leq s_i\leq r$, and
  $\vare\in \zeta^*\setminus w_j$, then  
\[\begin{array}{ll}
f_j(r)\rest\vare\forces_{\bbP_\vare}&\mbox{`` }\langle f_i(r_i)(\vare),
f^+_i(s_i)(\vare): i<j\rangle\conc \langle f_j(r)(\vare)\rangle\mbox{ is a
  partial play of}\\
& \Game^\lambda_0(\name{\bbQ}_\vare)\ \mbox{ in which Complete uses her
  strategy }\name{\st}^0_\vare \mbox{ ''.}
\end{array}\] 
\end{enumerate}
Let $r\in\cA_j$. We define $g(r)\in\bbP_{\zeta^*}$ so that
$\dom(g(r))=\dom(f_j(r))$ and 
\begin{enumerate}
\item[$(\otimes)_8^a$] if $\vare\in \dom(g(r))$ satisfies $\Upsilon(\vare)>j$,
  then $g(r)(\vare)$ is the $<^*_\chi$--first $\bbP_\vare$--name for a
  condition in $\name{\bbQ}_\vare$ such that it is forced that 
  \begin{enumerate}
  \item[(a)] if  $\langle f_i(r_i)(\vare),f^+_i(s_i)(\vare): i<j\rangle\conc
    \langle f_j(r)(\vare)\rangle$ is a legal partial play of
    $\Game^\lambda_0(\name{\bbQ}_\vare)$ in which Complete follows her
    regular winning strategy $\name{\st}^0_\vare$, then $g(r)(\vare)$ is the
    answer to this partial play given to Complete by $\name{\st}^0_\vare$; 
\item[(b)]  if the assumptions of (a) are not satisfied then
  $g(r)(\vare)=f_j(r)(\vare)$. 
  \end{enumerate}
\item[$(\otimes)_8^b$] For $\vare\in \dom(g(r))$ with $\Upsilon(\vare)\leq
  j$ we set  $g(r)(\vare)=f_j(r)(\vare)$. 
\end{enumerate}

Clearly, $g$ is a well defined function from $\cA_j$ to $\bbP_{\zeta^*}$ and
$g\in N$. We note that $g(r)\geq f_j(r)$ for $r\in\cA_j$ and
\begin{enumerate}
\item[$(\otimes)_9$] if $\alpha<\beta$ are two successive elements of $w_j$,
  $r^0,r^1\in\cA_j$ are such that $r^0\rest \alpha$ and $r^1\rest \alpha$
  are compatible, then $g(r^0)\rest \beta=g(r^1)\rest\beta$. 
\end{enumerate}
[Why? Let $r^{\ell,+}_i\in\cA^+_i$, $r^\ell_i\in\cA_i$ for $i<j$ be such
that $r^\ell_i\leq r^{\ell,+}_i\leq r^\ell$. Then $r^0_i\rest\alpha$ and 
$r^1_i\rest\alpha$ are compatible and so are $r^{0,+}_i\rest\alpha$ and
$r^{1,+}_i\rest \alpha$. Therefore, by $(*)_1$ and \ref{X.1}(iv),
$f_i(r^0_i)\rest \beta= f_i(r^1_i)\rest\beta$ and $f_i^+(r^{0,+}_i)\rest
\beta= f_i^+(r^{1,+}_i) \rest\beta$. Since the definition of
$g(r^\ell)(\vare)$ involves only  $\langle f_i(r^\ell_i)(\vare),
f_i^+(r^{\ell,+}_i)(\vare): i<j\rangle$ and $f_j(r^\ell)(\vare),
\name{\st}^0_\vare, \Upsilon$, we conclude that
$g(r^0)(\vare)=g(r^1)(\vare)$ for all $\vare<\beta$.] 

Pick a maximal antichain $\cA^+_j\in N$ of $\bbP_{\zeta^*}$ such that 
\begin{enumerate}
\item[$(\otimes)^a_{10}$] $(\forall s\in \cA^+_j)(\exists r\in \cA_j)(r \leq
  s)$, 
\item[$(\otimes)^b_{10}$]  if $s\in \cA^+_j$, $r\in \cA_j$ and $r\leq s$,
  then either $g(r)\leq s$ or for some $\vare\in \dom(s)$ we have $g(r)\rest
  \vare\leq s\rest \vare$ and
\[s\rest\vare \forces_{\bbP_\vare}\mbox{`` the conditions
  $g(r)(\vare),s(\vare)$ are incompatible in $\name{\bbQ}_\vare$ ''.}\]  
\end{enumerate}
Define $f^+_j:\cA^+_j\longrightarrow \bbP_{\zeta^*}$ by $f^+_j(s)=g(r)$
where $r\in \cA_j$ is the unique member satisfying $r\leq s$. Plainly, the
function $f^+_j$ belongs to $N$ and it satisfies condition (v) of
Observation \ref{X.2A} (by $(\otimes)_9$). Hence $(\cA^+_j,f^+_j,w_j)$ is a
standard representation of some $(p^+_j,w_j)\in \bbP^{\rm RS}_{\zeta^*}\cap
N$. Note also that if $s\in \cA^+_j$ and $f^+_j(s)\leq s$, then letting
$r\in\cA_j$ be such that $s\geq r$ we see that $s\geq g(r)\geq f_j(r)$. Thus
$r,f_j(r)$ are compatible and hence $r\geq f_j(r)$. This in turn implies (by
$(\otimes)_7$) that for each $\vare\in \dom(g(r))$ with $\Upsilon(\vare)>j$
the condition $f_j(r)\rest \vare$ forces the assumptions of the case (a) of 
$(\otimes)_8$. Consequently the demand $(*)_3$ (for $f^+_j(s)$) holds. One
easily verifies that also the other relevant demands from $(*)_1$--$(*)_6$
are satisfied by $\cA_j,f_j,p_j,\cA_j^+,f^+_j,p^+_j$.  
\medskip

{\sc Case:}\qquad $j$ is a limit ordinal.\\
If $j\in\cS$ then let $q^*=q^*_j$, otherwise
$q^*=\emptyset_{\bbP_{\zeta^*}}$. Let $\cA^0_j\in N$ be a maximal antichain
of $\bbP_{\zeta^*}$ such that 
\[(\forall s\in \cA^0_j)(\forall i<j)(\exists  r\in \cA_i)(r\leq s).\]
We are going to define a function $g:\cA^0_j\longrightarrow
\bbP_{\zeta^*}$. Let $r\in \cA^0_j$ and for $i<j$ let $r_i\in\cA_i$,
$s_i\in\cA^+_i$ be the unique elements such that $r_i\leq s_i\leq r$. We
define $g(r)$ so that $\dom(g(r))=\bigcup\limits_{i<j}\dom(f_i(r_i))
\cup\dom(q^*)$ and   
\begin{enumerate}
\item[$(\otimes)^a_{11}$] if $\vare\in \dom(g(r))$ and $\Upsilon(\vare)\geq
  j$, then $g(r)(\vare)$ is the $<^*_\chi$--first $\bbP_\vare$--name for a
  condition in $\name{\bbQ}_\vare$ for which it is forced that 
  \begin{enumerate}
\item[(a)] if $\{f_i(r_i)(\vare):i<j\}\cup \{q^*(\vare)\}$ has an upper
    bound in $\name{\bbQ}_\vare$, then $g(r)(\vare)$ is such an upper bound; 
\item[(b)]  if the assumption of (a) is not satisfied but
  $\{f_i(r_i)(\vare):i<j\}$ has an upper  bound in $\name{\bbQ}_\vare$, then
  $g(r)(\vare)$ is such an upper bound;  
\item[(c)] otherwise, $g(r)(\vare)=\name{\emptyset}_{\name{\bbQ}_\vare}$. 
  \end{enumerate}
\item[$(\otimes)^b_{11}$] For $\vare\in\dom(g(r))$ with $\Upsilon(\vare)<j$
  we set $g(r)(\vare)=f_{\Upsilon(\vare)}(r_{\Upsilon(\vare)})(\vare)$. 
\end{enumerate}
Clearly, $g:\cA^0_j\longrightarrow\bbP_{\zeta^*}$ is a well defined
function, $g\in N$. 
\begin{enumerate}
\item[$(\otimes)_{12}$] Assume $r\in \cA^0_j$, $r_i\in\cA_i$, $r_i\leq r$
  for $i<j$. 
  \begin{enumerate}
\item[(a)] If $f_i(r_i)\leq r_i$ for all $i<j$, then $f_i(r_i)\leq g(r)$
    for all $i<j$.
\item[(b)] If $f_i(r_i),r_i$ are incompatible for some $i<j$, then $r$ and
  $g(r)$ are incompatible. 
  \end{enumerate}
\end{enumerate}
[Why? Clause (a) follows immediately from the inductive hypothesis $(*)_3$
and the definition of $g$. To show clause (b) assume that $\vare<\zeta^*$ and
$i_0<j$ are such that 
\begin{itemize}
\item $f_i(r_i)\rest\vare\leq r_i\rest\vare$ for all $i<j$, and 
\item $r_{i_0}\rest \vare\forces$`` $f_{i_0}(r_{i_0})(\vare),
  r_{i_0}(\vare)$ are incompatible '' 
\end{itemize}
(remember \ref {X.1}(iii)).  By Observation \ref{X.2A}(vi) for each $i<j$
  there is $s_i\in\cA_i$ such that $r_i\rest\vare$ and $s_i\rest \vare$ are
  compatible and $f_i(s_i)\leq s_i$. Then also
  $f_i(r_i)\rest(\vare+1)=f_i(s_i)\rest (\vare+1)\leq s_i\rest (\vare+1)$
  and for each $\alpha\leq\vare$ satisfying $\Upsilon(\alpha)\geq j$ we have  
\[\begin{array}{ll}
f_i(r_i)\rest\alpha\forces_{\bbP_\alpha} &\mbox{`` }\langle
f_{i'}(r_{i'})(\alpha):i'<i\rangle\mbox{ are innings of Incomplete in a play
  of}\\
&\Game^\lambda_0(\name{\bbQ}_\alpha)\ \mbox{ in which Complete follows
}\name{\st}^0_\alpha\mbox{ ''.} 
\end{array}\]
(Note that if $i'<i$ and $t\in \cA_{i'}$ is such that $t\leq s_i$, then
$t\rest\vare$ and $r_{i'}\rest \vare$ are compatible, so $f_{i'}(t)\rest
(\vare+1)= f_{i'}(r_{i'})\rest (\vare+1)$.) Consequently, by induction on
$\alpha\leq\vare+1$ we may show that $f_i(r_i)\rest\alpha\leq
g(r)\rest\alpha$ for all $i<j$ and $\alpha\leq \vare$. In particular,
$f_{i_0}(r_{i_0})\rest (\vare+1)\leq g(r)\rest (\vare+1)$ and hence, by the
choice of $i_0$ and $\vare$, $g(r)$ and $r_{i_0}$ are incompatible. Hence
also $g(r)$ and $r$ are incompatible.]

In the same way as for $(\otimes)_9$ we may argue that 
\begin{enumerate}
\item[$(\otimes)_{13}$]  if $\alpha<\beta$ are two successive elements of $w_j$,
  $r^0,r^1\in\cA_j^0$ are such that $r^0\rest \alpha$ and $r^1\rest \alpha$
  are compatible, then $g(r^0)\rest \beta=g(r^1)\rest\beta$. 
\end{enumerate}
Now pick a maximal antichain $\cA_j\in N$ of $\bbP_{\zeta^*}$ such that 
\begin{enumerate}
\item[$(\otimes)^a_{14}$] $(\forall s\in\cA_j)(\exists r\in \cA^0_j)(r\leq
  s)$ and 
\item[$(\otimes)^b_{14}$] if $s\in\cA_j$, $r\in\cA^0_j$ and $r\leq s$ then
  either $g(r)\leq s$ or for some $\vare\in\dom(s)$ we have $g(r)\rest
  \vare\leq s\rest\vare$ and $s\rest\vare\forces_{\bbP_\vare}$``
  $g(r)(\vare),s(\vare)$ are incompatible in $\name{\bbQ}_\vare$ ''. 
\end{enumerate}
Define $f_j:\cA_j\longrightarrow\bbP_{\zeta^*}$ by $f_j(s)=g(r)$ where $r\in
\cA^0_j$ is the unique member such that $r\leq s$ for $s\in\cA^0_j$. Using
$(\otimes)_{12}+(\otimes)_{13}$ and Proposition \ref{X.3} we may easily
argue that $(\cA_j,f_j,w_j)\in N$ is a standard representation of some
$(p_j,w_j)\in \bbP^{\rm RS}_{\zeta^*}\cap N$ such that
$(\cA_i,f_i,w_i)\preccurlyeq (\cA_i^+,f_i^+,w_i)\preccurlyeq
(\cA_j,f_j,w_j)$ for all $i<j$. Finally, exactly as in the successor case
($(\otimes)_7$--$(\otimes)_{10}$ and later) we define $\cA_j,f_j^+$ and
$p^+_j$. We easily verify that the demands $(*)_1$--$(*)_6$ are satisfied. 

This completes the construction of $\langle p_i,p_i^+,\cA_i,\cA^+_i,f_i,
f_i^+: i<\lambda\rangle$. It should be self evident that the conditions
$(*)_1$--$(*)_6$ imply the requirements
$(\boxtimes)^a_{20}$--$(\boxtimes)^e_{20}$. 
\end{proof}

Recalling Remark \ref{RSremark}, we define a condition $r\in\bbP_{\zeta^*}$ by
declaring that its domain (support) is $\dom(r)=\zeta^*\cap N$ and for each
$\zeta\in\zeta^*\cap N$ 
\[\begin{array}{r}
r\rest\zeta\forces\mbox{`` }r(\zeta)\geq p_{\Upsilon(\zeta)}(\zeta)\mbox{
  is generic for $\name{\bar{q}}^{\langle \zeta\rangle}$ over }\\
  N[\name{G}_{\bbP_\zeta}], \name{h}^{\langle\zeta\rangle},
  \name{\bbQ}_\zeta, \name{R}^\pr_\zeta,\bar{f}^{\langle\zeta\rangle}, \cS,
  \Upsilon(\zeta)\cdot\omega\mbox{ ''. }
\end{array}\]  

\begin{claim}
\label{cl3}
For every $\zeta\in (\zeta^*+1)\cap N$, Generic has a winning strategy in
the game $\Game^\cS_1(r\rest\zeta,N,h^{[\zeta]},\bbP_\zeta,R^\pr\rest\zeta,
\bar{f},\bar{q}^{[\zeta]})$.  
\end{claim}

\begin{proof}[Proof of the Claim]
We prove the claim by induction on $\zeta\in (\zeta^*+1)\cap N$, so suppose
that $\zeta\in (\zeta^*+1)\cap N$ and we know that for all $\xi\in
N\cap\zeta$ the condition $r\rest\xi$ is generic for $\bar{q}^{[\xi]}$
over $N,h^{[\xi]},\bbP_\xi,R^\pr\rest\xi,\bar{f},\cS,1$. Note,
the inductive hypothesis implies that:
\begin{enumerate}
\item[$(\boxplus)_0$] $r(\xi)$ is well-defined for $\xi\in N\cap\zeta$ (by
  Claim \ref{cl2}), and so 
\item[$(\boxplus)_1$] for each $\xi\in N\cap\zeta$ we may fix a
  $\bbP_\xi$--name $\name{\st}_\xi$ such that the condition $r\rest\xi$
  forces that it is a winning strategy of Generic in the game
  $\Game^\cS_{\Upsilon(\xi)\cdot\omega}  (r(\xi),N[\name{G}_{\bbP_\xi}],
  \name{h}^{\langle \xi\rangle},  \name{\bbQ}_\xi,\name{R}^\pr_\xi,
  \bar{f}^{\langle \xi\rangle},  \name{\bar{q}}^{\langle\xi\rangle})$, and  
\item [$(\boxplus)_2$] $(p_i^\zeta,w^\zeta_i)\leq'
(r\rest\zeta,\{0,\zeta\})$ for all $i<\lambda$, where $p^\zeta_i=p_i
\rest\zeta$ and $w^\zeta_i=(w_i\cap\zeta)\cup\{\zeta\}$.
\end{enumerate}

\begin{subclaim}
\label{subclaim}
Assume that $(p^*_i,v_i)\in \bbP_\zeta^{\rm RS}\cap N$ (for $i<\delta<
\lambda$), and $p^*\in\bbP_\zeta\cap N$, $r'\in\bbP_\zeta$ are such that  
\[r\rest \zeta\leq r',\quad p^*\leq r'\quad\mbox{and}\quad (\forall
j<i<\delta)((p^*_j,v_j)\leq' (p^*_i,v_i) \leq' (r',\{0,\zeta\})).\] 
Then there are conditions $p^+\in N\cap\bbP_\zeta$ and $r^+\in
\bbP_\zeta$ such that 
\[p^*\leq p^+\leq r^+,\qquad r'\leq r^+\ \mbox{ and }\ (\forall
i<\delta)((p^*_i,v_i)\leq' (p^+,\{0,\zeta\})).\]   
\end{subclaim}

\begin{proof}[Proof of the Subclaim]
By the inductive hypothesis (of Claim \ref{cl3}) we know that $r\rest\xi$
is $(N,\bbP_\xi)$--generic for all $\xi\in\zeta\cap N$. Therefore,
if $\zeta$ is a successor or a limit ordinal of cofinality $\geq\lambda$,
then we immediately get that $r\rest\zeta$ is $(N,\bbP_\zeta)$--generic
(remember \ref{n1.2}(5) and use $(\boxtimes)_1+
(\boxtimes)_{20}^e$). Hence either $r'$ is $(N,\bbP_\zeta)$--generic
or $\zeta$ is a limit ordinal of cofinality $\cf(\zeta)<\lambda$ and
for all $\xi<\zeta$ the condition $r'\rest \xi$ is
$(N,\bbP_\xi)$--generic. In either case Lemma \ref{RSbounds}(3)
applies.   
\end{proof}

Remembering that $\bar{\gamma}=\langle\gamma_\alpha:\alpha<\lambda\rangle$ 
was defined in Context \ref{context}(4) we set 
\[E_0^\zeta \stackrel{\rm def}{=} \{\delta<\lambda:\delta=\gamma_\delta>0
\mbox{ and }\delta>\Upsilon(\zeta)\mbox{ and }(\forall\alpha<\delta)
(\alpha\cdot \omega<\delta)\}\]  
(plainly, $E_0^\zeta\in D$). 
\medskip

We will give a strategy $\st$ for Generic in $\Game^\cS_1(r\rest\zeta, 
N, h^{[\zeta]},\bbP_\zeta,R^\pr\rest\zeta,\bar{f},
\bar{q}^{[\zeta]})$. Essentially, for each $\vare\in\zeta\cap N$, Generic is
going to play a suitable game at the coordinate $\vare$, but at each time
she is playing the game on less than $\lambda$ coordinates. Her actions will
be described on the intervals $[\gamma_\alpha, \gamma_{\alpha+1})$
separately. We may assume that, for $\xi\in N\cap\zeta$, the sets
$\name{C}_i\in  D^{\bV[\name{G}_{\bbP_\xi}]}$ given to Generic by her
winning strategy  $\name{\st}_\xi$ are of the form
$\mathop{\triangle}\limits_{\alpha<\lambda}  \name{A}_\alpha$, where
$\name{A}_\alpha\in D^\bV$.  To keep the ``past and future plays'' under
control, in addition to the  innings $(r^-_i,r_i,C_i)$ of the game, Generic
will construct aside   
\begin{enumerate}
\item[$(\boxplus)_3^\alpha$] $s_\alpha,z_\alpha,z_\alpha^\diamond ,r^*_j,r^+_j,
\name{r}_i^\ominus(\vare),\name{r}_i^\oplus(\vare),\name{A}_i^\xi(\vare), 
\name{\tau}_{i,\vare},A^\xi_{i,\vare}$ for $j<\gamma_{\alpha+1}$, 
$\xi<\lambda$ and $\vare\in N\cap\zeta$ with
$\Upsilon(\vare)<\gamma_{\alpha+1}$ and $i$ such that
$\Upsilon(\vare)+i<\gamma_{\alpha+1}$.  
\end{enumerate}
The primary roles played by these objects are as follows:
\begin{itemize}
\item $\name{r}_i^\ominus(\vare),\name{r}_i^\oplus(\vare), \mathop{\triangle}\limits_{\xi<\lambda} 
\name{A}_i^\xi(\vare)$ are the innings in the stage $i$ of the game played at coordinate
$\vare$,
\item $s_\alpha\in \bbP_\zeta$ are conditions deciding
  $\name{r}_i^\ominus(\vare), \name{A}_i^\xi(\vare)$ and
  $\name{\tau}_{i,\vare},A^\xi_{i,\vare}$ are the  values forced; the
  conditions $s_\alpha$ will be used as $r_j$'s too, 
\item $r^*_j,r^+_j\in \bbP_\zeta$ form a strategic completeness play and
  are used to guarantee that the sequences $\langle r_j:
  j<\gamma_\alpha\rangle$ have upper bounds, 
\item $z_\alpha,z_\alpha^\diamond\in N\cap \bbP_\zeta$ are conditions
  forming  a strategic completeness play and are used to guarantee that the  
  sequences $\langle r^-_j(\vare):j<\gamma_\alpha\rangle$ (are forced to) have
  upper bounds. 
\end{itemize}
Thus we require that the following demands $(\boxplus)_4$--$(\boxplus)_{10}$ are 
satisfied in addition to the rules of the game. 
\begin{enumerate}
\item[$(\boxplus)_4$] $s_\alpha,r^*_j,r^+_j\in\bbP_\zeta$,
  $z_\alpha,z_\alpha^\diamond\in N\cap \bbP_\zeta$, and $r_j^+\leq
  s_\alpha$, $r_i^*\leq r_i^+\leq r_j^*$ for all $i<j<\gamma_{\alpha+1}$. 
\item[$(\boxplus)_5$] If $\gamma_\alpha\notin \cS$ then $r_{\gamma_\alpha}
  \leq r^*_{\gamma_\alpha}$ and if $\gamma_\alpha\in\cS$ then
  $r_{\gamma_\alpha}=r^*_{\gamma_\alpha}$. Also, $s_\alpha\in
  \bigcap\limits_{i<\gamma_{\alpha+1}} \cI_i^{[\zeta]}$ and if
  $\gamma_\alpha<j<\gamma_{\alpha+1}$, then $r_j=s_\alpha$
  and 
\[C_j=E_0^\zeta\cap\bigcap\{A^\xi_{i,\vare}:\xi<\gamma_{\alpha+1}\ \&\
  \vare\in N\cap\zeta\ \&\ \Upsilon(\vare)+i<\gamma_{\alpha+1}\}.\]    
\item[$(\boxplus)_6$] If $\vare\in \zeta\setminus\{\vare'\in N\cap\zeta:
  \Upsilon(\vare')<\gamma_{\alpha+1}\}$,  $j<\gamma_{\alpha+1}$, then    
\[\begin{array}{ll}
r^+_j\rest\vare\forces_{\bbP_\vare}&\mbox{`` }\langle
r^*_i(\vare),r_i^+(\vare): i\leq j\rangle\mbox{ is a legal partial play of }
\Game^\lambda_0(\name{\bbQ}_\vare) \\  
&\mbox{ in which Complete uses her regular winning strategy 
$\name{\st}^0_\vare$ ''.}   
\end{array}\]
\item[$(\boxplus)_7$] If $\vare\in N\cap\zeta$ and $\Upsilon(\vare)+i
<\gamma_{\alpha+1}$, then $\name{r}_i^\ominus(\vare)$ is a
$\bbP_\vare$--name for a condition in  $N[\name{G}_{\bbP_\vare}] \cap 
\name{\bbQ}_\vare$, $r^\oplus_i(\vare)$ is a $\bbP_\vare$--name for a
condition in $\name{\bbQ}_\vare$ and $\name{A}^\xi_i(\vare)$ are
$\bbP_\vare$--names for elements of $D\cap\bV$ (for $\xi<\lambda$) such
that   
\[\begin{array}{ll}
r\rest\vare\forces_{\bbP_\vare}&\mbox{`` }\langle
\name{r}^\ominus_i(\vare),\name{r}_i^\oplus(\vare), \mathop{\triangle}
\limits_{\xi<\lambda}\name{A}_i^\xi(\vare): \Upsilon(\vare)+i<
\gamma_{\alpha+1}\rangle\mbox{ is a partial play}\\  
&\ \mbox{ of the game } \Game^\cS_{\Upsilon(\vare)\cdot\omega} (r(\vare),
N[\name{G}_{\bbP_\vare}],\name{h}^{\langle\vare\rangle},
\name{\bbQ}_\vare, \name{R}^\pr_\vare,\bar{f}^{\langle\vare\rangle},
\name{\bar{q}}^{\langle\vare\rangle}) \\   
&\ \mbox{ in which Generic uses her winning strategy $\name{\st}_\vare$  
  ''.}   
\end{array}\]
\item[$(\boxplus)_8$] If $\vare\in N\cap\zeta$ and $\gamma_\alpha\leq
  j=\Upsilon(\vare)+i<\gamma_{\alpha+1}$, then $\name{\tau}_{i,\vare}\in N$
  is a $\bbP_\vare$--name for a condition in $\name{\bbQ}_\vare$,
  $A^\xi_{i,\vare}\in D$ for $\xi<\gamma_{\alpha+1}$, and   
\[s_\alpha\rest\vare\forces_{\bbP_\vare}\mbox{`` }\name{A}^\xi_i(\vare)=
A^\xi_{i,\vare}\mbox{ and if $j\in\cS$ then } r^-_j(\vare)=
\name{r}_i^\ominus(\vare)=\name{\tau}_{i,\vare}\mbox{ ''.} \] 
Also,  $r^*_j\rest\vare\forces r^*_j(\vare)=r_j^+(\vare)=
\name{r}^\oplus_i(\vare)$.    
\item[$(\boxplus)_9$] If $\gamma_\alpha\notin \cS$, then
  $(p_j^\zeta,w_j^\zeta)\leq' (r_{\gamma_\alpha+1}^-,\{0,\zeta\})$ for all
  $j<\gamma_{\alpha+1}$ (where $p^\zeta_j=p_j\rest\zeta$ and
  $w^\zeta_j=(w_j\cap \zeta)\cup\{\zeta\}$, see $(\boxplus)_2$). 
\item[$(\boxplus)_{10}$] If $\gamma_\alpha\notin\cS$, then
  $r^-_{\gamma_\alpha}\leq z_\alpha\leq z_\alpha^\diamond\leq
  r^-_{\gamma_\alpha+1}$ and $z^\diamond_\beta\leq z_\alpha$ for
  $\beta<\alpha$, $\gamma_\beta\notin\cS$, and for each $\vare\in 
N\cap\zeta$ we have 
\[\begin{array}{ll}
z^\diamond_\alpha\rest\vare\forces_{\bbP_\vare}&\mbox{`` }\langle
z_\beta(\vare),z^\diamond_\beta(\vare): \beta\leq \alpha,\
\gamma_\beta\notin \cS\rangle\mbox{ is a legal partial play of}\\  
&\Game^\lambda_0(\name{\bbQ}_\vare)\mbox{ in which Complete uses her winning
  strategy $\name{\st}^0_\vare$   ''.}   
\end{array}\]
\end{enumerate}
\medskip

Suppose that the players arrived to a stage $\gamma_\alpha$ of the play and
the sequence $\langle r^-_i,r_i,C_i:i<\gamma_\alpha\rangle$ has been
constructed and the objects listed in $(\boxplus)_3^\beta$ have been
chosen for all $\beta<\alpha$. The procedure applied now by Generic 
depends on whether the inning at $\gamma_\alpha$ is given by Generic or
Antigeneric, so we will have two cases below.  
\medskip

\noindent {\sc Case 1:}\quad $\gamma_\alpha\notin\cS$.\\ 
The inning $(r_{\gamma_\alpha}^-,r_{\gamma_\alpha},C_{\gamma_\alpha})$ is
chosen by Antigeneric, but we have to argue first that 
\begin{enumerate}
\item[$(\boxplus)_{11}$] there exists a legal move for Antigeneric. 
\end{enumerate}
For this we note that there is a condition $r'\in\bbP_\zeta$ stronger than
all $r_j$ for $j<\gamma_\alpha$. [Why? If $\alpha=\beta+1$ then $s_\beta$
works, see $(\boxplus)_5$. If $\alpha$ is limit, then look at the conditions
$r^*_j,r^+_j$ for $j<\gamma_\alpha$: by $(\boxplus)_6$--$(\boxplus)_8$ the
sequence $\langle r^*_j:j<\gamma_\alpha\rangle$ has an upper bound and by
$(\boxplus)_5$ this bound will be above all $r_j$ for $j<\gamma_\alpha$.]
Then the condition $r'$ is stronger than $r\rest\zeta$ and stronger
than all $r^-_j$ (for $j<\gamma_\alpha$) and we may use Subclaim
\ref{subclaim} to pick conditions $r^-\in \bbP_\zeta\cap N$ and
$r^*\in\bbP_\zeta$ so that  
\[r^-\leq r^*,\quad r'\leq r^*\ \mbox{ and }\ (\forall
j<\gamma_\alpha)(r^-_j\leq r^-).\]
Then $(r^-,r^*,\lambda)$ is a legal inning of Antigeneric at this stage. 
\medskip

So, let $(r_{\gamma_\alpha}^-,r_{\gamma_\alpha},C_{\gamma_\alpha})$ be the
inning played by Antigeneric at stage $\gamma_\alpha$. By Subclaim
\ref{subclaim} Generic may pick conditions $r^-\in\bbP_\zeta\cap N$ and
$r^+\in \bbP_\zeta$ so that  
\begin{enumerate}
\item[$(\boxplus)_{12}$] $r^-_{\gamma_\alpha}\leq r^-\leq r^+$,
  $r_{\gamma_\alpha}\leq r^+$ and $(\forall j<\gamma_{\alpha+1})((p^\zeta_j,
  w^\zeta_j)\leq' (r^-,\{0,\zeta\}))$.  
\end{enumerate}
Then $z^\diamond_\beta\leq r^-_{\gamma_\beta+1}\leq r^-_{\gamma_\alpha}\leq
r^- \leq r^+$ (for $\beta<\alpha$, $\gamma_\beta\notin\cS$), so by the
argument as in \ref{subclaim} we may apply Lemma \ref{emplystrat} for
$\langle z_\beta,z_\beta^\diamond:\beta<\alpha\ \&\
\gamma_\beta\notin\cS\rangle$ and $r^-,r^+$. This will give us conditions
$z_\alpha,z_\alpha^\diamond\in N\cap \bbP_\zeta$ and $r^*_{\gamma_\alpha}\in
\bbP_\zeta$ such that $r^-\leq z_\alpha\leq z_\alpha^\diamond\leq
r^*_{\gamma_\alpha}$, $r^+\leq r^*_{\gamma_\alpha}$ and for each $\vare\in
\zeta\cap N$ the condition $z^\diamond_\alpha\rest \vare$ forces that ``
the sequence $\langle z_\beta(\vare),z^\diamond_\beta(\vare): \beta\leq
\alpha,\ \gamma_\beta\notin \cS\rangle$ is a legal partial play of 
$\Game^\lambda_0(\name{\bbQ}_\vare)$ in which Complete uses her regular
winning strategy $\name{\st}^0_\vare$ ''. Now, for each $\vare\in N\cap
\zeta$ and $i$ with $\Upsilon(\vare)+i <\gamma_{\alpha+1}$ Generic picks
$\name{r}^\ominus_i(\vare),\name{r}^\oplus_i(\vare), \name{A}^\xi_i(\vare)$
so that  (for $\Upsilon(\vare)+i<\gamma_\alpha$ they are the ones chosen at
previous stages and)
\[\begin{array}{ll}
r\rest\vare\forces_{\bbP_\vare}&\mbox{`` }\langle
\name{r}^\ominus_i(\vare),\name{r}_i^\oplus(\vare), \mathop{\triangle}
\limits_{\xi<\lambda}\name{A}_i^\xi(\vare):\Upsilon(\vare)+i<
\gamma_{\alpha+1} \rangle\mbox{ is a partial play of}\\  
&\ \ \Game^\cS_{\Upsilon(\vare)\cdot\omega}(r(\vare),
N[\name{G}_{\bbP_\vare}],\name{h}^{\langle\vare\rangle}, \name{\bbQ}_\vare,
\name{R}^\pr_\vare,\bar{f}^{\langle \vare\rangle},
\name{\bar{q}}^{\langle\vare\rangle}) \\    
&\ \mbox{ in which Generic uses her winning strategy $\name{\st}_\vare$ 
  ''}   
\end{array}\]
and 
\begin{itemize}
\item if $\Upsilon(\vare)+i=\gamma_\alpha$, then\footnote{Note that
necessarily $i$ is {\em not\/} a successor in this case. Hence if 
$\gamma_\alpha<\Upsilon(\vare)\cdot\omega$, then $i\notin\cS[
\Upsilon(\vare)\cdot\omega]$ and otherwise
$i=\gamma_\alpha\notin\cS$. Consequently, in the game
$\Game^\cS_{\Upsilon(\vare)\cdot\omega}$ the inning at $i$ belongs to
Antigeneric.}     
\[r^*_{\gamma_\alpha}\rest\vare\forces_{\bbP_\vare}\mbox{`` } 
\name{r}^\ominus_i(\vare)=z^\diamond_\alpha(\vare)\ \&\ \name{r}^\oplus_i
(\vare)= r^*_{\gamma_\alpha}(\vare)\ \&\ \name{A}_i^\xi(\vare)=
C_{\gamma_\alpha}\mbox{ '',}\]    
\item if $\gamma_\alpha<\Upsilon(\vare)<\gamma_{\alpha+1}$, then   
\[r^*_{\gamma_\alpha}\rest\vare\forces_{\bbP_\vare}\mbox{`` } 
\name{r}^\ominus_0(\vare)=z^\diamond_\alpha(\vare)\ \&\ 
\name{r}^\oplus_0(\vare)=r^*_{\gamma_\alpha}(\vare)\ \&\ 
\name{A}_0^\xi(\vare)=C_{\gamma_\alpha}\mbox{ ''.}\]  
\end{itemize}
Next, Generic picks $r^+_{\gamma_\alpha}\in\bbP_\zeta$ so that
$\dom(r^+_{\gamma_\alpha})=\dom(r^*_{\gamma_\alpha})$ and the demands of
$(\boxplus)_6$ (for $j=\gamma_\alpha$) are satisfied and
$r^+_{\gamma_\alpha}(\vare)= r^*_{\gamma_\alpha}(\vare)$ whenever $\vare\in
N\cap\zeta$, $\Upsilon(\vare)<\gamma_{\alpha+1}$. For $\gamma_\alpha<j
<\gamma_{\alpha+1}$ she chooses $r^*_j,r^+_j$ so that $\dom(r^*_j)=
\dom(r^+_j)= \dom(r^*_{\gamma_\alpha})$  and $(\boxplus)_6$ holds (for
$\vare\in\zeta\setminus \{\vare'\in N\cap\zeta:\Upsilon(\vare')<
\gamma_{\alpha+1}\}$) and for $\vare\in N\cap\zeta$ with
$\Upsilon(\vare)<\gamma_{\alpha+1}$ we have 
\begin{itemize}
\item if $j<\Upsilon(\vare)$, then $r^*_j(\vare)= r^+_j(\vare)=
  r^*_{\gamma_\alpha}(\vare)$, and
\item if $j=\Upsilon(\vare)+i$, then $r_j^*\rest\vare \forces$``
  $r^*_j(\vare)=r^+_j(\vare)= \name{r}^\oplus_i(\vare)$ ''. 
\end{itemize}
By $(\boxplus)_6$ and $(\boxplus)_7$ the
sequence $\langle r^+_j:j<\gamma_{\alpha+1}\rangle$ has an upper bound in
$\bbP_\zeta$, so Generic may choose $s_\alpha \in\bbP_\zeta$ and
$\name{\tau}_{i,\vare}\in N$, $A^\xi_{i,\vare} \in D$ (for $\vare\in
N\cap\zeta$, $\Upsilon(\vare)+i<\gamma_{\alpha+1}$ and
$\xi<\gamma_{\alpha+1}$) such that 
\begin{itemize}
\item $s_\alpha$ is stronger than all $r^+_j$ (for $j<\gamma_{\alpha+1}$)
  and $s_\alpha\in\bigcap\limits_{j<\gamma_{\alpha+1}}\cI_j^{[\zeta]}$, and 
\item $s_\alpha\rest\vare\forces_{\bbP_\vare}$`` $\name{r}^\ominus_i(\vare)
  =\name{\tau}_{i,\vare}\ \&\ \name{A}^\xi_i(\vare)=A^\xi_{i,\vare}$ ''.  
\end{itemize}
She also defines conditions $r^-_j\in \bbP_\zeta\cap N$ (for
$\gamma_\alpha<j< \gamma_{\alpha+1}$) by declaring that
$\dom(r^-_j)=\dom(z^\diamond_\alpha)\cup\{\vare'\in
N\cap\zeta:\Upsilon(\vare')<\gamma_{\alpha+1}\}$ and  
\begin{itemize}
\item if $\vare\in\dom(r^-_j)\setminus \{\vare'\in N\cap\zeta:
  \Upsilon(\vare')<\gamma_{\alpha+1}\}$ or $\vare\in N\cap\zeta$, $j
  <\Upsilon(\vare)<\gamma_{\alpha+1}$, then $r^-_j(\vare)=z_\alpha^\diamond
  (\vare)$, and  
\item if $\vare\in N\cap\zeta$ and $j=\Upsilon(\vare)+i^*<
  \gamma_{\alpha+1}$, then 
\[\begin{array}{ll}
r^-_j\rest\vare\forces_{\bbP_\vare}&\mbox{`` if }\langle
\name{\tau}_{i,\vare}:\gamma_\alpha\leq \Upsilon(\vare)+i<\gamma_{\alpha+1}
\rangle \mbox{ is an increasing sequence }\\
&\ \mbox{ of conditions above $z^\diamond_\alpha(\vare)$, then } 
r^-_j(\vare)= \name{\tau}_{i^*,\vare},\\
&\ \mbox{ else $r^-_j(\vare)=z^\diamond_\alpha(\vare)$ ''.}
\end{array}\]
\end{itemize}
Then for $\gamma_\alpha<j<\gamma_{\alpha+1}$ Generic plays $r^-_j$ chosen
above, $r_j=s_\alpha$ and $C_j=E_0^\zeta\cap\bigcap\{A^\xi_{i,\vare}:\xi<
\gamma_{\alpha+1}\ \&\ \vare\in N\cap\zeta\ \&\ \Upsilon(\vare)+i<
\gamma_{\alpha+1}\}$. Observe that the rules of the game and the demands
$(\boxplus)_4$--$(\boxplus)_{10}$ are obeyed by the choices above.
\bigskip

\noindent {\sc Case 2:}\quad $\gamma_\alpha\in\cS$ (this may happen only if
$\alpha$ is limit).\\ 
Generic's choices are similar to those from the previous case, except that 
$r_{\gamma_\alpha},r^-_{\gamma_\alpha}$ need to be treated differently, and
this influences the choice of $s_\alpha$ too. 

By $(\boxplus)_7$ at previous stages, for each $\vare\in N\cap\zeta$ with
$\Upsilon(\vare)<\gamma_\alpha$ Generic may pick $\name{r}^\ominus_i(\vare),
\name{r}^\oplus_i(\vare)$ and $\name{A}^\xi_i(\vare)$ (for
$\gamma_\alpha\leq \Upsilon(\vare)+i<\gamma_{\alpha+1}$) so that
$(\boxplus)_7$ still holds. By $(\boxplus)_6+(\boxplus)_8$ she may 
choose a condition $r'\in\bbP_\zeta$ stronger than all $r_j$ for
$j<\gamma_\alpha$ and such that $r'(\vare)= \name{r}^\oplus_i
(\vare)$ when $\vare\in N\cap\zeta$, $\Upsilon(\vare)+i=\gamma_\alpha$,
$0<i$. Now, for $\vare\in \zeta\cap N$ with $\gamma_\alpha\leq
\Upsilon(\vare)< \gamma_{\alpha+1}$ we have that 
\[\begin{array}{ll}
r'\rest\vare\forces_{\bbP_\vare}&\mbox{`` the condition $r'(\vare)$ is
  $(N[\name{G}_{\bbP_\vare}],\name{\bbQ}_\vare)$--generic and}\\
&\ \ r'(\vare)\geq r^-_j(\vare)\mbox{ for all $j<\gamma_\alpha$ ''.}
\end{array}\]
Therefore, by Observation \ref{tostart}, Generic may pick $\bbP_\vare$--names
$\name{r}^\ominus_0(\vare), \name{r}^\oplus_0(\vare)$ for conditions in
$\name{\bbQ}_\vare$ such that 
\[r\rest\vare\forces_{\bbP_\vare}\mbox{`` }\name{r}^\ominus_0(\vare) \in
N[\name{G}_{\bbP_\vare}]\ \&\ \name{r}^\ominus_0(\vare)\leq
\name{r}^\oplus_0(\vare) \ \&\ r(\vare)\leq \name{r}^\oplus_0(\vare)\mbox{
  '',} \] 
and
\[r'\rest\vare\forces_{\bbP_\vare}\mbox{`` }r'(\vare)\leq
\name{r}^\oplus_0(\vare)\ \&\ (\forall j<\gamma_\alpha)(r^-_j(\vare)\leq
\name{r}^\ominus_0(\vare))\mbox{ ''.}\] 
Then for $\vare\in \zeta\cap N$, $\xi<\lambda$ and $i$ such that
$\gamma_\alpha\leq\Upsilon(\vare)<\Upsilon(\vare)+i<\gamma_{\alpha+1}$
Generic picks $\bbP_\vare$--names $\name{r}_i^\ominus(\vare),
\name{r}_i^\oplus(\vare)$ and $\name{A}^\xi_i(\vare)$  so that
$(\boxplus)_7$ holds.   

Now she declares that $\dom(r_{\gamma_\alpha})=\dom(r_{\gamma_\alpha}^*)=
\dom(r')$ and for $\vare\in\dom(r_{\gamma_\alpha})$ she sets 
\[r_{\gamma_\alpha}(\vare)=r_{\gamma_\alpha}^*(\vare)=
\left\{\begin{array}{ll}
\name{r}^\oplus_0(\vare)&\mbox{ if }\vare\in
N\cap\zeta,\ \gamma_\alpha\leq\Upsilon(\vare)<\gamma_{\alpha+1},\\  
r'(\vare) &\mbox{ otherwise.}
\end{array}\right.\]
Then $r^+_{\gamma_\alpha},r^*_j,r^+_j$ (for $\gamma_\alpha<j
<\gamma_{\alpha+1}$) are defined exactly as in the previous case. Like
before, the sequence $\langle r^+_j:j<\gamma_{\alpha+1}\rangle$ has an upper
bound in $\bbP_\zeta$, so Generic may find a condition $s'\in\bbP_\zeta$, 
names $\name{\tau}_{i,\vare}\in N$ for conditions in $\name{\bbQ}_\vare$ and
sets $A^\xi_{i,\vare}\in D$ (for $\vare\in N\cap\zeta$, $\gamma_\alpha\leq
\Upsilon(\vare)+i<\gamma_{\alpha+1}$ and $\xi<\gamma_{\alpha+1}$) such that  
\begin{itemize}
\item $r^+_j\leq s'$ for all $j<\gamma_{\alpha+1}$ and $s'\in
  \bigcap\limits_{j<\gamma_{\alpha+1}}\cI_j^{[\zeta]}$, and  
\item $s'\rest\vare\forces_{\bbP_\vare}$`` $\name{r}^\ominus_i(\vare)
  =\name{\tau}_{i,\vare}\ \&\ \name{A}^\xi_i(\vare)=A^\xi_{i,\vare}$ ''.  
\end{itemize}
Clearly $r^-_j\leq r_j\leq r'\leq r_{\gamma_\alpha}\leq s'$ for
$j<\gamma_\alpha$, so Generic may use Subclaim \ref{subclaim}  
to pick conditions $s_\alpha\in\bbP_\zeta$ and $r^-\in N\cap\bbP_\zeta$ such  
that $r^-\leq s_\alpha$, $s'\leq s_\alpha$ and $(\forall j<\gamma_\alpha)
(r^-_j\leq r^-)$. Moreover, Generic may modify $r^-$ so that, additionally,
for each $\vare\in\dom(r^-)$ we have 
\[\begin{array}{ll}
\forces_{\bbP_\vare}&\mbox{`` if the sequence }\langle r^-_j(\vare):j<
\gamma_\alpha\rangle\mbox{ has an upper bound in }\name{\bbQ}_\vare\\
&\ \mbox{ then }r^-(\vare)\mbox{ is such a bound ''.}
\end{array}\] 
Then, for $\gamma_\alpha\leq j<\gamma_{\alpha+1}$, Generic sets
$\dom(r^-_j)=\dom(r^-)$ and  
\begin{itemize}
\item if $\vare\in\dom(r^-_j)\setminus \{\vare'\in N\cap\zeta:
  \Upsilon(\vare')< \gamma_{\alpha+1}\}$, then $r^-_j(\vare)=r^-(\vare)$,
  and   
\item if $\vare\in N\cap\zeta$, $\Upsilon(\vare)<\gamma_{\alpha+1}$ and
  $i^*$ is $0$ if $j\leq\Upsilon(\vare)$ and $i^*$ is such that
  $j=\Upsilon(\vare)+i^*$ if $\Upsilon(\vare)<j<\gamma_{\alpha+1}$, then
  $r^-_j(\vare)$ is the $<^*_\chi$--first name for a condition in
  $\name{\bbQ}_\vare$ such that 
\[\begin{array}{ll}
r^-_j\rest\vare\forces_{\bbP_\vare}&\mbox{`` if }\langle
\name{\tau}_{i,\vare}:\gamma_\alpha\leq \Upsilon(\vare)+i<\gamma_{\alpha+1}
\rangle \mbox{ is an increasing sequence of}\\
&\mbox{ conditions, and $\name{\tau}_{0,\vare}$ is above all
  $r_{j'}^-(\vare)$ for $j'<\gamma_\alpha$,}\\
&\mbox{ then } r^-_j(\vare)=\name{\tau}_{i^*,\vare},\ \  \mbox{ and else
  $r^-_j(\vare)=r^-(\vare)$ ''.}  
\end{array}\]
\end{itemize}
Since in the current case $\alpha$ is a limit ordinal, $(\boxplus)_{10}$
implies that for each $\vare\in N\cap \zeta$ and a condition
$z\in\bbP_\vare$ stronger than all $r^-_{j'}\rest\vare$ (for
$j'<\gamma_\alpha$) we have
\[z\forces_{\bbP_\vare}\mbox{`` the sequence }\langle r^-_{j'}(\vare):
j'<\gamma_\alpha\rangle\mbox{ has an upper bound in $\name{\bbQ}_\vare$
  '',}\] 
and thus $z\forces_{\bbP_\vare}$`` $r^-_{j'}(\vare)\leq r^-(\vare)$ for all
$j'<\gamma_\alpha$ '' (remember the additional demand on
$r^-$). Consequently, the conditions $r^-_j$ (for $\gamma_\alpha\leq
j<\gamma_{\alpha+1}$) are well defined, they belong to $N$ and if
$j'<\gamma_\alpha$ and $\gamma_\alpha\leq j_1<j_2<\gamma_{\alpha+1}$ then
$r^-_{j'}\leq r^-_{j_1}\leq r^-_{j_2}\leq s_\alpha$. Finally, for
$\gamma_\alpha\leq j<\gamma_{\alpha+1}$ Generic plays $r^-_j$ chosen above,
$r_{\gamma_\alpha}$ and $r_j=s_\alpha$ if $j>\gamma_\alpha$, and
$C_j=E_0^\zeta\cap\bigcap\{A^\xi_{i,\vare}:\xi<\gamma_{\alpha+1}\ \&\
\vare\in N\cap\zeta\ \&\ \Upsilon(\vare)+i<\gamma_{\alpha+1}\}$. Easily the
rules of the game\footnote{Note: in this case it is not required that
  $r^-_{\gamma_\alpha}\leq r_{\gamma_\alpha}$ as $\gamma_\alpha\in \cS\cap
  \cR$.},   and the demands $(\boxplus)_4$--$(\boxplus)_{10}$ are obeyed by
the choices above. 

\begin{subclaim}
\label{sub}
The strategy $\st$ described above is a winning strategy for Generic in
$\Game^\cS_1(r\rest\zeta,N[\name{G}_{\bbP_\zeta}], h^{[\zeta]},
\bbP_\zeta,R^\pr\rest\zeta, \bar{f},\bar{q}^{[\zeta]})$.   
\end{subclaim}

\begin{proof}[Proof of the Subclaim]
Suppose that $\langle r_j^-,r_j,C_j:j<\lambda\rangle$ is a result of a 
play of $\Game^\cS_1(r\rest\zeta,N[\name{G}_{\bbP_\zeta}], h^{[\zeta]},
\bbP_\zeta,R^\pr\rest\zeta, \bar{f},\bar{q}^{[\zeta]})$ in which Generic 
follows the strategy $\st$. By what was said in the description of the 
strategy (specifically in $(\boxplus)_{11}$) both players had always 
legal moves, so the play lasted really $\lambda$ steps. Let 
\[s_\alpha,z_\alpha,z^\diamond_\alpha,r^*_j,r^+_j,
\name{r}_i^\ominus(\vare),\name{r}_i^\oplus(\vare),
\name{A}_i^\xi(\vare),\name{\tau}_{i,\vare}, A^\xi_{i,\vare}\]  
be the objects written aside by Generic (so they satisfy the demands
$(\boxplus)_4$--$(\boxplus)_{10}$).   

We will argue that condition $(\circledast)$ of Definition \ref{n1.1}(3)
holds (i.e., Generic wins).
\medskip

Assume that a limit ordinal $\delta\in\cS\cap\bigcap\limits_{j<\delta}C_j$
is such that\footnote{Note: $h^{[\zeta]}\circ f_\delta$ is an increasing
sequence of conditions by $(\boxtimes)_9$} $h^{[\zeta]}(f_\delta(j))=r^-_j$
for each successor $j<\delta$. Then also $\delta\in E^\zeta_0$, so
$(\forall \alpha<\delta)(\alpha\cdot\omega<\delta)$ and $\Upsilon(\zeta)<
\delta=\gamma_\delta=\sup(\gamma_\alpha:\alpha<\delta\ \&\ \gamma_\alpha 
\notin\cS)$. Therefore, by $(\boxplus)_9$, we have that 
\[\big(\forall i<\delta\big)\big(\exists j<\delta\big)\big(
(p^\zeta_i,w^\zeta_i)\leq'(r^-_j,\{0,\zeta\})\big),\]
and consequently if $\zeta'=\min(\zeta^*_\delta,\zeta)$, then 
$(p^{\zeta'}_i,w^{\zeta'}_i)\leq' (q^*_\delta\rest\zeta',\{0,\zeta'\})$
for all $i<\delta$. (Remember, $q^*_\delta$ is an upper bound to
$h^{[\zeta^*_\delta]} \circ f_\delta$, see $(\boxtimes)_{12}+
(\boxtimes)_{13}$.) Hence, by $(\boxplus)^d_{20}+(\boxplus)_2$,  
\begin{enumerate}
\item[$(\boxplus)_{13}$] if $\vare\in\zeta\cap\zeta^*_\delta\cap N$, 
$\Upsilon(\vare)\geq \delta$ and $z\in\bbP_\vare$ is stronger than both 
$q^*_\delta\rest\vare$ and $r\rest\vare$, then $z\forces q^*_\delta(\vare)
\leq r(\vare)$.
\end{enumerate}
Let $\bar{\beta}=\langle\beta(\vare)\leq\vare^*\rangle$ be the increasing
(continuous) enumeration of $w_\delta\cap (\zeta+1)$. Then $\beta(0)=0\in
w_0$ and $\beta(\vare^*)=\zeta\in w_{i^*}$, where $i^*=\Upsilon(\zeta)
<\delta$. Since  $r_\delta$ is stronger than all $r^-_j,r_j$ (for
$j<\delta)$ we also have that for each $\vare\leq\vare^*$: 
\begin{enumerate}
\item[$(\boxplus)^a_{14}$] $r_\delta\rest \beta(\vare)$ is an upper bound to
  $h^{[\beta(\vare)]}\circ f_\delta$, and 
\item[$(\boxplus)^b_{14}$] $r_\delta\rest\beta(\vare)\in
  \bigcap\limits_{\alpha<\delta} \cI^{[\beta(\vare)]}_\alpha$ (by
  $(\boxplus)_5$), and
\item[$(\boxplus)^c_{14}$] if $\beta(\vare)\in w_i\cap\zeta$ for some
  $i<\delta$, then $\Upsilon\big(\beta(\vare)\big)\cdot \omega<\delta$ and
  $\delta\in\bigcap\{A^\xi_{j,\beta(\vare)}:\xi,j<\delta\}$, and hence (by 
  $(\boxplus)_8$) the condition $r_\delta\rest\beta(\vare)$ forces (in
  $\bbP_{\beta(\vare)}$) that: 
\[\begin{array}{l}
\delta\in 
\mathop{\triangle}\limits_{j<\lambda} \mathop{\triangle}\limits_{\xi<
  \lambda}\name{A}_j^\xi(\beta(\vare))\cap\cS[\Upsilon(\beta(\vare))\cdot
\omega] \mbox{  and}\\ 
\name{h}^{\langle\beta(\vare)\rangle}\circ f_\delta^{\langle
  \beta(\vare)\rangle}\mbox{is an increasing sequence of conditions in
  $\name{\bbQ}_{\beta(\vare)}$ and}\\
\name{h}^{\langle\beta(\vare)\rangle}\big(f_\delta^{\langle\beta(\vare)
\rangle}(j)\big) =h\big(f_\delta\big(\Upsilon(\beta(\vare))+j\big) \big)  
\big(\beta(\vare)\big) =r^-_{\Upsilon(\beta(\vare))+j}\big(\beta(\vare)
\big)=\\
 \name{r}^\ominus_j\big(\beta(\vare)\big)\mbox{ for all successor 
$j<\delta$.} 
\end{array}\]
\end{enumerate}
By induction on $\vare\leq\vare^*$ we are going to show that 
\begin{enumerate}
\item[$(\boxplus)_{15}^a$] $(h^{[\beta(\vare)]}\circ f_\delta)\;
  (R^\pr\rest\beta(\vare))\; (r_\delta\rest\beta(\vare))$, and 
\item[$(\boxplus)_{15}^b$] $\beta(\vare)\leq\zeta^*_\delta$ and
  $q_\delta^{[\beta(\vare)]}= q^*_\delta \rest \beta(\vare)\leq
  r_\delta\rest\beta(\vare)$. 
\end{enumerate}

To start we note that $\langle \name{r}^\ominus_j(0), \name{r}^\oplus_j(0),
\mathop{\triangle}\limits_{\xi<\lambda}\name{A}_j^\xi(0): j<\delta\rangle$
is a legal partial play of $\Game^\cS_0(r(0), N,\name{h}^{\langle 
  0\rangle},\bbQ_0,\name{R}^\pr_0,\bar{f}^{\langle   0\rangle},
\name{\bar{q}}^{\langle 0\rangle})$ in which Generic uses her 
winning strategy $\name{\st}_0$ (and all names involved are actually objects
from $\bV$). Also $\delta\in \mathop{\triangle}\limits_{j<\lambda}
\mathop{\triangle}\limits_{\xi<\lambda}\name{A}_j^\xi(0)$,
$\name{r}^\ominus_j(0)= r^-_j(0)$ for all $j\in\delta\cap\cS$ (by 
$(\boxplus)_8$) and $r^-_j(0)=\name{h}^{\langle 0\rangle}(f_\delta(j))$ for
all successor $j<\delta$. Therefore, as the play on the coordinate $\vare=0$ 
is won by Generic, $r_\delta(0)=r^*_\delta(0)=\name{r}^\oplus_\delta(0)\geq 
\name{q}^{\langle 0\rangle}_\delta$ and $(\name{h}^{\langle 0\rangle}\circ 
f_\delta)\;\name{R}^\pr_0\; r_\delta(0)$. Also, by
$(\boxplus)^a_{14}$, $r_\delta\rest\beta(1)$ is an upper bound to
$h^{[\beta(1)]}\circ f_\delta$ and hence $\big(h^{[\beta(1)]}\circ f_\delta
\big)\; \big(R^\pr\rest \beta(1)\big)\; \big(r_\delta\rest \beta(1)\big)$. And
since $r_\delta\rest\beta(1)\in\bigcap\limits_{\alpha<\delta}
\cI_\alpha^{[\beta(1)]}$ (see $(\boxplus)^b_{14}$) we may conclude that
$\zeta^*_\delta\geq \beta(1)$ and therefore $q_\delta^{[\beta(1)]}=q^*_\delta
\rest \beta(1)$ (as $\beta(1)$ is not a limit of members of $w_\delta$). Now
by induction on $\xi\leq\beta(1)$ we argue that $q^*_\delta\rest \xi\leq
r_\delta\rest\xi$: it follows from $(\boxtimes)_{19}$ and what has been said
above that $q^*_\delta(0)=\name{q}^{\langle 0\rangle}_\delta\leq
r_\delta(0)$. Suppose $0<\xi<\beta(1)$, $\xi\in N$, and assume that we have
shown $q^*_\delta\rest\xi\leq r_\delta\rest\xi$. Then, by $(\boxplus)_{13}$,
$r_\delta\rest\xi\forces_{\bbP_\xi} q^*_\delta(\xi)\leq r(\xi)\leq
r_\delta(\xi)$. This concludes the arguments for $(\boxplus)_{15}$ when
$\vare=1$. 

Suppose we have justified $(\boxplus)_{15}$ for $\vare_0<\vare^*$ and
$\Upsilon(\beta(\vare_0))<\delta$. By $(\boxplus)_7$, the condition
$r_\delta\rest\beta(\vare_0)$ forces that 
\[\langle\name{r}^\ominus_i( \beta(
\vare_0)), \name{r}^\oplus_i(\beta(\vare_0)), \mathop{\triangle}\limits_{\xi
<\lambda}\name{A}_i^\xi(\beta(\vare_0)): \Upsilon(\beta(\vare_0))+i<
\delta\rangle\]
is a legal partial play of the game 
\[\Game^\cS_{\Upsilon(\vare
  (\beta_0))} (r(\beta(\vare_0)), N[\name{G}_{\bbP_{\beta(\vare_0)}}],
\name{h}^{\langle \beta(\vare_0)\rangle},\name{\bbQ}_{\beta(\vare_0)}, 
\name{R}^\pr_{\beta(\vare_0)}, \bar{f}^{\langle \beta(\vare_0)\rangle},
\name{\bar{q}}^{\langle \beta(\vare_0)\rangle})\]
in which Generic uses the winning strategy
$\name{\st}_{\beta(\vare_0)}$. Therefore, using $(\boxplus)_{14}^c$ and
$(\boxplus)_5+(\boxplus)_8$, we get that $r_\delta\rest\beta(\vare_0)$
forces (in $\bbP_{\beta(\vare_0)}$) that   
\[\mbox{`` } r_\delta(\beta(\vare_0)) = \name{r}^\oplus_\delta
(\beta(\vare_0))\geq\name{q}_\delta^{\langle \beta(\vare_0)\rangle}\ \mbox{
  and }\  (\name{h}^{\langle \beta(\vare_0)\rangle}\circ f_\delta^{\langle 
  \beta(\vare_0) \rangle})\; \name{R}^\pr_{\beta(\vare_0)}\;
  r_\delta(\beta(\vare_0))\mbox{ ''.}\]  
Since $\name{h}^{\langle \beta(\vare_0)\rangle}\circ f_\delta^{\langle
    \beta(\vare_0) \rangle} (j)=h(f_\delta(\Upsilon(\beta(\vare_0))+j)
  (\beta(\vare_0))$ and $r_\delta\rest \beta(\vare_0+1)$ is an upper bound
  to $h^{[\beta(\vare_0)]}\circ f_\delta$ (by $(\boxplus)^a_{14}$), we may
  use our inductive hypothesis of $(\boxplus)^a_{15}$ for $\vare_0$ to
  conclude that $(h^{[\beta(\vare_0+1)]}\circ f_\delta)\; (R^\pr\rest
  \beta(\vare_0+1))\; (r_\delta\rest \beta(\vare_0))$ (i.e.,
  $(\boxplus)^a_{15}$ for $\vare=\vare_0+1$). By $(\boxplus)^b_{14}$ and the
  inductive hypothesis of $(\boxplus)^b_{15}$ for $\vare_0$ we see now that
  $\zeta^*_\delta\geq \beta(\vare_0+1)$ and also, as $\beta(\vare_0+1)$ is
  not the limit of elements of $w_\delta$, $q_\delta^{[\beta(\vare_0+1)]}=
  q^*_\delta\rest \beta(\vare_0+1)$. Like before we may use
  $(\boxplus)_{13}+(\boxtimes)_{19}$ to show inductively that for all
  $\xi\leq \beta(\vare_0+1)$ we have $q^*_\delta\rest\xi\leq
  r_\delta\rest\xi$, getting $(\boxplus)^b_{15}$ for $\vare=\vare_0+1$. 

Suppose that we have shown $(\boxplus)_{15}$ for $\vare_0<\vare^*$ and
$\Upsilon(\beta(\vare_0)) =\delta$ (so $\beta(\vare_0)$ is the limit of
points from $\bigcup\limits_{i<\delta} w_i$). The assumption of
$(\boxplus)^a_{15}$ for $\vare_0$ implies immediately that
$(h^{[\beta(\vare_0+1)]}\circ f_\delta)\; (R^\pr\rest \beta(\vare_0+1))\;
(r_\delta\rest \beta(\vare_0))$ as there are no new ``active'' coordinates
here (remember $(\boxtimes)^b_4$ and $(\boxplus)^a_{14}$). The same way as
before we argue that $(\boxplus)^b_{15}$ holds for $\vare=\vare_0+1$ as
well. 

Now suppose that $\vare\leq\vare^*$ is a limit ordinal and that we have
shown $(\boxplus)_{15}$ for all $\vare'<\vare$. Since
$\beta(\vare)=\sup(\beta(\vare'):\vare'<\vare)$, the assumption of
$(\boxplus)^a_{15}$ for $\vare'<\vare$ implies that  $(h^{[\beta(\vare)]}
\circ f_\delta)\; (R^\pr\rest \beta(\vare))\; (r_\delta\rest \beta(\vare))$.
The assumption of $(\boxplus)^b_{15}$ for $\vare'<\vare$ implies that
$\zeta^*_\delta\geq \beta(\vare)$ and $q^*_\delta\rest \beta(\vare) \leq
r_\delta\rest \beta(\vare)$. Hence, remembering $(\boxplus)^b_{14}$, we also
conclude that $q^*_\delta\rest\beta(\vare)\in \bigcap_{\alpha<\delta}
\cI_\alpha^{[\beta(\vare)]}$ and therefore $q_\delta^{[\beta(\vare)]}=
q^*_\delta\rest \beta(\vare)$. Consequently, the proof of $(\boxplus)_{15}$
is completed. 
\medskip

Finally, $(\boxplus)_{15}$ for $\vare^*$ implies that Generic won the
considered play, finishing the proof of the Subclaim. 
\end{proof}
\end{proof}

To conclude the Theorem we will argue that the strategy $\st$ for 
Generic in $\Game^\cS_1(r,N,h,\bbP_{\zeta^*},R^\pr,
\bar{f},\bar{q}^{[\zeta^*]})$ described in the proof of Claim 
\ref{cl3} for $\zeta=\zeta^*$ is also a winning strategy for Generic in
$\Game^\cS_1(r,N,h,\bbP_{\zeta^*},R^\pr, \bar{f},\bar{q})$. Note that if
$\zeta=\zeta^*$ and in the proof of Subclaim \ref{sub} we look at
$\vare=\vare^*$ (i.e., $\beta(\vare)=\zeta^*$), then we obtain that for
relevant $\delta$:
\begin{enumerate}
\item[$(\alpha)$]  $r_\delta$ is an upper bound to $h\circ f_\delta$ (by
  $(\boxplus)^a_{14}$), 
\item[$(\beta)$] $r_\delta\in \bigcap\limits_{\alpha<\delta}\cI_\alpha$ (by
  $(\boxplus)^b_{14}$), 
\item[$(\gamma)$] $(h\circ f_\delta)\; R^\pr\; r_\delta$ (by
  $(\boxplus)^a_{15}$). 
\end{enumerate}
Therefore, $q_\delta$ must have the same properties $(\alpha)$--$(\gamma)$,
and by $(\boxtimes)^b_{13}$ we get $\zeta^*_\delta=\zeta^*$ and
$q^*_\delta=q_\delta$. Now $(\boxplus)^b_{15}$ implies
$q_\delta^{[\zeta^*]}=q_\delta$ and we see that Generic wins indeed. 

Thus the condition $r$ is generic for $\bar{q}$ over
$N,h,\bbP_{\zeta^*},R^\pr,\bar{f}$ and this completes the proof of the
Theorem.
\end{proof}

\subsection*{Acknowledgements}
We would like to thank the referees for their valuable comments which helped
to improve the manuscript.

Both authors acknowledge support from the United States-Israel
Binational Science Foundation (Grant no. 2010405). This is publication
1001 of the second author.

%\bibliographystyle{plain}
%\bibliography{lista,listb,listx,listf,listy}

\end{document}